\documentclass[12pt]{article}

\usepackage[top=1.5in, bottom=1.5in, left=1.2in, right=1.2in]{geometry}
\tolerance=6000 \hbadness=10000
\usepackage{graphicx}
\usepackage{epsfig}
\usepackage{float}
\usepackage[active]{srcltx}
\usepackage{amsmath}
\usepackage{amsthm}
\usepackage{amsxtra}
\usepackage{bbm}

\usepackage{amsfonts,amssymb}
\usepackage{latexsym}

\newtheorem{Th}{Theorem}
\newtheorem{Prop}[Th]{Proposition}
\newtheorem{Lm}[Th]{Lemma}
\newtheorem{Co}[Th]{Corollary}

\theoremstyle{definition}
\newtheorem{Def}[Th]{Definition}

\newtheorem{Rem}[Th]{Remark}
\newtheorem{Ex}[Th]{Example}

\newcommand{\supp}{\mathrm{supp}}
\newcommand{\sign}{\mathrm{sign}}

\newcommand{\dd}{\mathrm{d}}
\newcommand{\id}{\mathrm{Id}}

\newcommand{\im}{\mathrm{Im}}
\newcommand{\re}{\mathrm{Re}}

\newcommand{\eg}{\text{e.g.\,}}
\newcommand{\ie}{\text{i.e.\;\,}}

\setcounter{section}{-1}

\begin{document}
\title{On spectral properties of the Schreier graphs of the Thompson group $F$}
\author{ {\bf Artem Dudko}  \\
                    IMPAN, Warsaw, Poland  \\
          adudko@impan.pl \\
         {\bf Rostislav Grigorchuk
         } \\        Texas A\&M University, College Station, TX, USA  \\      grigorch@math.tamu.edu }

\date{}

\maketitle

\section{Introduction}
The Thompson's group $F$ is one of the most famous and most important groups related to many areas of mathematics (see the survey \cite{CannonFloydParry-ThompsonGroups-96}). The question about amenability of this group remains open for more than 50 years despite many attempts to solve it. By a remarkable Kesten's criterion of amenability a group $G$ is amenable if and only if 1 belongs to the spectrum of the Markov operator associated to a symmetric random walk on $G$. This criterion also holds for graphs of uniformly bounded degree (see \cite{CeccheriniGrigorchukHarpe-Amenability-99}). Amenability of the Thompson's group $F$ (which is equivalent to amenability of the Cayley graph of $F$) would imply that every Schreier graph of $F$ is amenable. This motivates studying spectral properties of such graphs.

This article is devoted to studying spectral properties of the family of Schreier graphs associated to the action of $F$ on the interval $[0,1]$ (see \cite{CannonFloydParry-ThompsonGroups-96}). A Schreier graph of a group $G$ is determined by a triple $(G,H,S)$, where $H$ is a subgroup of $G$ and $S$ is a system of generators for $G$. These graphs naturally arise in association with an action of $G$ on a set $X$. Given $x\in X$ one can introduce the Schreier graph $\Gamma_x=\Gamma(G,G_x,S)$, where $G_x$ is the stabilizer of $x$.

An interesting and important case is when a subgroup $H<G$ is maximal and of infinite index or when $H$ is weakly maximal (\ie $H$ has infinite index in $G$ and is maximal among the groups with this property). Study of spectral, combinatorial and dynamical properties of Schreier graphs associated with weakly maximal subgroups of branch groups of intermediate growth constructed by the second author got a lot of attention and led to many unexpected results and connections. Among them are relations to holomorphic dynamics (\cite{BartholdiGrigorchuk-Spectrum-00}, \cite{GrigorchukNekrashevych-SchurComplement-07},\cite{DangGrigorchukLyubich-Selfsimilar-20}) and  to random Shr\"odinger operators (\cite{GrigorchukLenzNagnibeda-Spectra-18}). In
\cite{DudkoGrigorchuk-Shape-18} this approach was used to show that there exists an uncountable family of isospectral but not quasi-isometric Cayley graphs.

The new method introduced in \cite{BartholdiGrigorchuk-Spectrum-00} for studying spectra of graphs using the relation to the spectra of operators associated with the Koopman representation for groups acting on rooted trees was generalized by the authors to the case of actions $(G,X,\mu)$ where $\mu$ is a quasi-invariant measure \cite{DudkoGrigorchuk-Spectrum-17}.

Around 2010 the second author suggested to D. Savchuk to inspect Schreier graphs $\Gamma_x$, $x\in[0,1]$, of Thompson group $F$ associated with natural action of $F$ on the interval $[0,1]$ by piecewise-linear transformations. It was discovered by Savchuk that the stabilizers $F_x$ of points $x\in(0,1)$ are maximal subgroups and the Schreier graphs $\Gamma_x$ are amenable. He described their shape. The graph for the case $x=\tfrac{1}{2}$ is shown on Figure \ref{FigSchreierThompson}.
\begin{figure}[h]\centering\includegraphics[width=0.7\linewidth]
{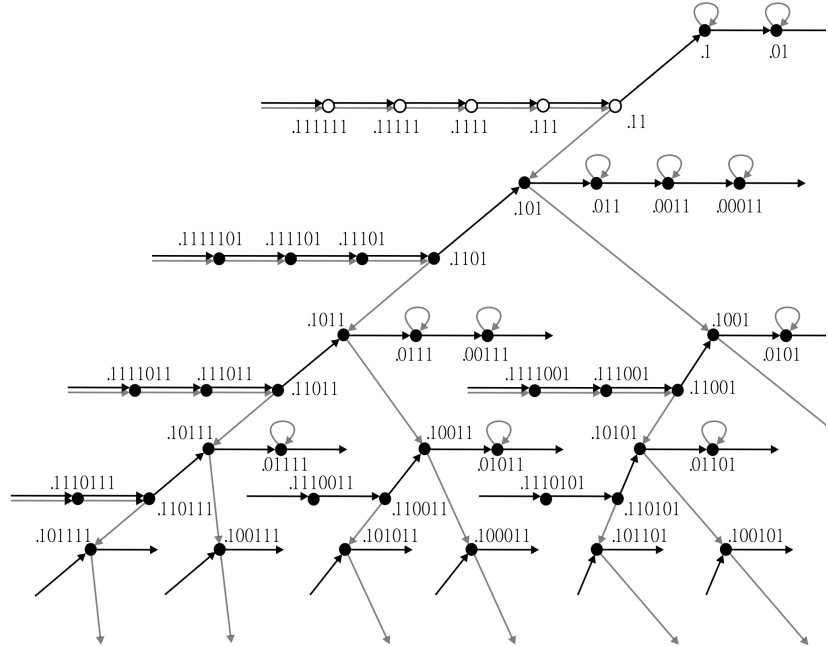}
\caption{The Schreier graph $\Upsilon$ of the action of $F$ on the orbit of $\tfrac{1}{2}$. The black colored edges correspond to the action of the generator $a$ and the grey colored edges correspond to the action of the generator $b$ of $F$.}\label{FigSchreierThompson} \end{figure} This graph (which we denote by $\Upsilon$) was used by Kaimanovich to show that $F$ has no Liouville property \cite{Kaimanovich-ThompsonNotLiouville-17}, \ie there are non-constant harmonic functions on $F$. The graph $\Upsilon$ is one of the main objects of investigation in this article.

 Due to the amenability of Schreier graphs $\Gamma_x$, $x\in[0,1]$, the spectra of Markov operators contain 1, but it is still interesting to know what exactly is the spectrum and what can be said about spectral measures $\mu_v$ associated with delta-functions $\delta_v$, where $v$ is a vertex of the graph. The graphs $\Gamma_x$ associated with $F$ are tree-like graphs. Spectra of tree-like graphs have been studied for many years. Usually, they are unions of finitely many intervals with the spectrum of Lebesgue type (\ie the spectral measure is absolutely continuous with respect to the Lebesgue measure). For instance, in the case of the Cayley graph of a free group $F_m$ the spectrum of the Markov operator associated to the simple random walk is the interval $I_m=[-\tfrac{\sqrt{2m-1}}{m},\tfrac{\sqrt{2m-1}}{m}]$. The spectral measure associated with the delta function $\delta_e$, where $e$ is the identity element of $F_m$, is equal to $d\mu(x)=\frac{\sqrt{2m-1-m^2x^2}}{\pi(1-x^2)}dx$ on $I_m$. In the case of the random walk on the set of integers with probability $1>p>0$ of going one step to the right and probability $q=1-p$ of going one step to the left the spectrum is $[-\sqrt{4pq},\sqrt{4pq}]$ and spectral measure is equal to $d\mu(x)=\frac{dx}{\pi\sqrt{4pq-x^2}}$.

In our study we inspect spectra not only of Markov operators but more generally of so called Laplace type operators  associated to weighted Schreier graphs $\Gamma_x$. Here weight can take negative or even complex values. Given an action of a group $G$ on a set $X$, a point $x\in X$ and an element $m=\sum\limits_{i=1}^n\alpha_i g_i\in\mathbb C[G]$ of the group algebra, where $n\in \mathbb N, \alpha_i\in\mathbb C,g_i\in G$, we consider the associated Laplace type operator on $l^2(Gx)$ denoted by $H_{m,x}$ (see Section \ref{SecPrelim} and formula \eqref{EqHmGen}). Our main results concern the case of the action of the Thompson group $F$ on $[0,1]$ and $m=\alpha(a+a^{-1})+\beta(b+b^{-1})\in\mathbb R[F]$, where $a,b$ are the standard generators of $F$ (see Subsection \ref{SubsecThompson} for details) and $\alpha,\beta\in\mathbb R$. One of our main results is a description of the Kesten spectral measure of $H_{m,1/2}$ with respect to the vertex $1/2$ (Theorem \ref{ThMain}). Another result is a description of the spectrum of $H_{m,x}$ for the case of real $\alpha,\beta$ of the same sign (Theorem \ref{ThSameSign}).

For $\alpha=\beta=1/4$ the operator $H_{m,x}$ coincides with the Markov operator of simple random walk on $\Gamma_x$. Closely related to the Markov operator $M$ is the Laplace operator $\Delta=\id-M$, where $\id$ is the identity operator. Given a vertex $v$ of the Schreier graph $\Gamma_x$ denote by $\eta_v$ the spectral measure of $\Delta$ with respect to the delta function $\delta_v\in l^2(\Gamma_x)$. Let $N_v(s)=\eta_v((-\infty,s])$ be the distribution function of $\eta_v$. These functions play important role in studying Laplace operators. In the case of amenable group the behavior of $N_v(s)$ near zero is of special interest  (see \cite{BendikovPittetSauer}, \cite{Luck-Invariants-01}).
 If $N_v(s)\sim s^\gamma$ near zero then $\gamma$ is called Novikov-Shubin invariant. This parameter is also related to the so-called Lifshitz tails (see \eg \cite{KirschMetzger-IntegratedDensity-07}). For Schreier graphs much less is known. One of our results states that for the Schreier graph $\Upsilon$ (see Figure \ref{FigSchreierThompson}) one has $N_{1/2}(s)\sim s^{3/2}$ near zero (Proposition \ref{PropIntDens}).

For the Cayley graph of the free group $F_m,m\geqslant 2$, or $d$-regular rooted tree $T_d,d\geqslant 2$, probabilities $P_{v_0,v_0}^{(n)}$ of returning to the initial vertex $v_0$ after $n$ steps decay exponentially as follows, for instance, from direct computations or Kesten's criterium of amenability. For $\mathbb Z_+^d$ the decay of these probabilities is of the power type $n^{-\frac{d}{2}}$. When the graph is a combination of parts of non-amenable graphs and amenable graphs, the behavior of return probabilities is unclear in general. We show that for the Schreier graph $\Upsilon$ of the action of Thomspon group $F$ on the orbit of $1/2$ these probabilites decay as $n^{-\frac{3}{2}}$ (Theorem \ref{ThAsympMoments}).

Finally, we address the question about the behavior of spectrum for covering of graphs. In \cite{DudkoGrigorchuk-Shape-18} the authors proved a theorem which they called "Hulanicki type theorem for graphs" (due to its relation to the famous Hulanicki criterion of amenability in terms of weak inclusion of the trivial representation into a regular representation). Namely, they showed that if $\Gamma_1\to\Gamma_2$ is a covering between two weighted graphs of uniformly bounded degree and with uniformly bounded weights then the inclusion of spectra $\sigma(\Delta_2)\subset\sigma(\Delta_1)$ of the corresponding weighted Laplace operators holds in two cases: $1)$ $\Gamma_1$ has subexponential growth; $2)$ $\Gamma_1$ is amenable and $\Gamma_2$ is finite. In Appendix we show that in the second case the inclusion may fail if $\Gamma_2$ is not finite.

The paper is organized as follows. In Section \ref{SecPrelim} we recall the main definitions and formulate the main results. In Section 2 we present some graph operations and describe how they affect the generating series of return probabilities. In particular, we calculate the generating series corresponding to the graph $\Upsilon=\Gamma_{1/2}$ (see Figure \ref{FigSchreierThompson}). In Section \ref{SectionMain} we calculate spectral measures associated to $\Upsilon$ using Cauchy-Stieltjes transform and prove Theorem \ref{ThMain}. In Section \ref{SectionAsymp} we study the generating series associated to simple random walk on $\Upsilon$ and prove Theorem \ref{ThAsympMoments} and Proposition \ref{PropIntDens}. In Section \ref{SectionMuvH} we investigate the dependence of Kesten spectral measure on the vertex of the graph and prove Theorem \ref{ThSameSign}. Finally, in Section \ref{SectionHulanicki} we present an example of graph covering related to Hulanicki Type Theorem for Graphs from \cite{DudkoGrigorchuk-Shape-18} that answers a question raised there.

\subsection*{Acknowledgements}
The authors are grateful to Pierre de la Harpe for valuable comments.

The second author was partially supported
by Simons Foundation Collaboration Grant for Mathematicians, Award Number 527814. Also, the second author acknowledges the support of  the  Max Planck Institute for Mathematics in Bonn and Humboldt Foundation.

\section{Preliminaries and the main results}\label{SecPrelim}
\subsection{Laplace type operators and spectral measures.}
Let $G$ be a finitely generated group with a symmetric (\ie $S=S^{-1}$) generating set $S=\{g_1,\ldots,g_n\}$. Assume that $G$ acts on a set $X$. Given a point $x\in X$ we consider the undirected Schreier graph $\Gamma_x$ whose vertex set is the orbit $Gx$ and the edge set is $\{\{y,g_iy\}:y\in Gx, 1\leqslant i\leqslant n\}$. Given a linear combination of elements from $S$
$$m=\sum\limits_{i=1}^n\alpha_ig_i\in\mathbb C[G]$$ we introduce a Laplace type operator $H_{m,x}$ on $l^2(Gx)$ by
\begin{equation}\label{EqHmGen}(H_{m,x}f)(y)=\sum\limits_{i=1}^n\alpha_if(g_i^{-1}y)\;\;\forall y\in Gx,\end{equation} where $f\in l^2(Gx)$. In particular, if $\alpha_i=\tfrac{1}{n}$ for all $i$ then $H_{m,x}$ is the Markov operator associated to the simple random walk on $\Gamma_x$.

Recall that any self-adjoint operator $A$ on a separable Hilbert space $\mathcal H$ admits a spectral decomposition
$$A=\int\limits_{[-\|A\|,\|A\|]}\lambda E(\lambda),$$ where $E(\lambda)$ is a projection-valued spectral measure defined on Borel subsets of $[-\|A\|,\|A\|]$ and taking values on projections of $\mathcal H$. Given a vector $\xi\in\mathcal H$ one can define a real valued spectral measure $\mu$ on $[-\|A\|,\|A\|]$ (depending on $\xi$) by
\begin{equation}\label{EqSpectralMeasure}\mu(B)=(E(B)\xi,\xi),
\end{equation} where $B$ is a Borel subset of $[-\|A\|,\|A\|]$ and $(\cdot,\cdot)$ is the scalar product on $\mathcal H$.

 In this paper we study the spectral measures $\mu_{m,x}$ of $H_{m,x}$ (in the case when these operators are self-adjoint) associated to the functions $\delta_x\in l^2(\Gamma_x)$. Such measures were introduced by Kesten in \cite{Kesten-BanachMean-59,Kesten-RandomWalk-59} for the case of random walks on groups. Sometimes they are called Kesten spectral measures. In general, computing such spectral measures is a complicated task. One of results in this direction concerns the group of intermediate growth constructed by the second author in \cite{Grigorchuk-Grigorchuk-80}. The spectral measures for the action of this group on the boundary of the binary rooted tree were computed in \cite{GrigorchukKrylyuk-Grigorchuk-12}. This result was generalized to the case of spinal groups \cite{GrigorchukPerezNagnibeda-Schreier-18}.

\subsection{Thompson group $F$.}\label{SubsecThompson}
\noindent It is known that $F$ is generated by two elements $a,b$ where
\begin{equation}\label{EqAB} a(x)=\left\{\begin{array}{ll}\tfrac{x}{2}&\text{if}\;\;x\in[0,\tfrac{1}{2}),\\
x-\tfrac{1}{4}&\text{if}\;\;x\in[\tfrac{1}{2},\tfrac{3}{4}),\\
2x-1&\text{if}\;\;x\in[\tfrac{3}{4},1];\end{array}\right.\;\; b(x)=\left\{\begin{array}{ll}x&\text{if}\;\;x\in[0,\tfrac{1}{2}),\\
\tfrac{1}{2}a(2x-1)+\tfrac{1}{2}&\text{if}\;\;x\in[\tfrac{1}{2},1].\end{array}\right.
\end{equation}
For a point $x\in(0,1)$ one can introduce a Schreier graph $\Gamma_x$ associated to the action of $F$ on the orbit $Fx$. These graphs were described by Savchuk in \cite{Savchuk-ShcreierThompson-10,Savchuk-SchreierThompson-15}. We  compute spectral measures of certain Laplace type operators associated to the graph $\Upsilon=\Gamma_{1/2}$. As a corollary we obtain the description of spectra of these Laplace type operators for $\Gamma_x$ for all $x\in (0,1)$.

\begin{figure}[h]\centering\includegraphics[width=0.7\linewidth]{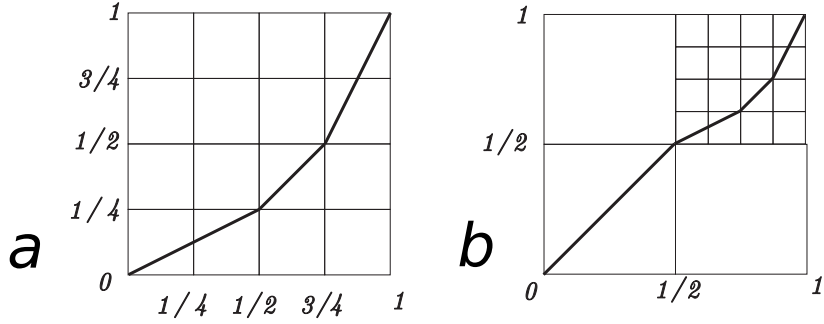}
\caption{The graphs of the generators $a,b$ of $F$}\label{FigThompsonGen} \end{figure}

\subsection{Main results.}
Our main results concern spectra and spectral measures of Laplace type operators $H_{m,x}$ on $l^2(\Gamma_x)$, where \begin{equation}\label{EqmDef}m=\alpha(a+a^{-1})+\beta(b+b^{-1})\in\mathbb R[F].\end{equation}
Fix $\alpha,\beta\in\mathbb R\setminus \{0\}$. Introduce functions $P_*,Q_*$ on $\mathbb R$ by the formulas
\begin{equation}\label{EqPQFormulas0} P_*(z)=\tfrac{z}{2}\pm\tfrac{1}{2}\sqrt{z^2-4(\alpha+\beta)^2},\;\;
Q_*(z)=\tfrac{z}{2}+\beta\pm\tfrac{1}{2}\sqrt{z^2-4\beta z+4(\beta^2-\alpha^2)}.
\end{equation} Here if the expression under the root sign is nonnegative we choose the sign equal to the sign of $z$ in the formula for $P_*(z)$ and choose the sign equal to the sign of $z-2\beta$ in the formula for $Q_*(z)$. If the expression under the root sign is negative we choose the root with a positive imaginary part.

Consider the equation
\begin{equation}\label{EqV0}\beta^4v^3+(P_*(z)+Q_*(z))\beta^2v^2+
(P_*(z)Q_*(z)+(\beta^2-\alpha^2))v+P_*(z)=0.\end{equation}
We will show that for any $z\in\mathbb R$ it has at most one solution $V$ with positive imaginary part. Recall that the support of a measure $\mu$ on $\mathbb R$ (denoted by $\supp(\mu)$) is the set of points $s\in\mathbb R$ such that for every open neighborhood $U$ of $s$ one has $\mu(U)>0$. Below we list the main results of the paper.
\begin{Th}\label{ThMain} The spectral measure $\mu_{m,1/2}$ of the operator $H_{m,1/2}$ on $l^2(\Gamma_{1/2})$ is absolutely continuous with respect to the Lebesgue measure $\lambda$ on $\mathbb R$. The support of $\mu_{m,1/2}$ is the closure of the set of points $z$ for which \eqref{EqV0} has a solution $V=V(z)$ with positive imaginary part. In particular, $\supp(\mu_{m,1/2})$ is a union of at most finitely many closed intervals. For $z\in\supp(\mu_{m,1/2})$ one has:
\begin{equation}\label{EqDmuDlam}\frac{\dd\mu_{m,1/2}(z)}{\dd\lambda(z)}=\im \frac{V(z)}{\pi(\beta V(z)+1)^2}.\end{equation} The Radon-Nikodym derivative above is continuous everywhere and is analytic everywhere except finitely many points $z$.
\end{Th}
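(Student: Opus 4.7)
The plan is to compute the Cauchy--Stieltjes transform
\[
G(z) = \langle (H_{m,1/2}-z)^{-1}\delta_{1/2},\delta_{1/2}\rangle, \qquad \im z>0,
\]
of $\mu_{m,1/2}$ and then recover the spectral measure from Stieltjes inversion,
\[
\frac{\dd\mu_{m,1/2}}{\dd\lambda}(z) = \frac{1}{\pi}\lim_{\varepsilon\to 0^+}\im G(z+\I\varepsilon),
\]
applied wherever the non-tangential limit is finite. Absolute continuity of $\mu_{m,1/2}$, the description of $\supp(\mu_{m,1/2})$ as a finite union of closed intervals, and the regularity of the density will then all follow by showing that $G$ admits continuous boundary values on $\mathbb R$ off a finite set.

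First, I would use the self-similar structure of $\Upsilon$ (as described in Section~2 and in Savchuk's work) to identify a small number of auxiliary Cauchy transforms attached at the vertex $1/2$. The graph $\Upsilon$ consists of a central piece to which semi-infinite self-similar branches are glued; the functions $P_*(z)$ and $Q_*(z)$ in \eqref{EqPQFormulas0} should be the Cauchy transforms at the root of such semi-infinite branches, each satisfying a quadratic fixed-point equation obtained by peeling off one level of self-similarity. The square-root branch choice prescribed after \eqref{EqPQFormulas0} is exactly the one that makes $P_*,Q_*$ Herglotz functions (\ie sending the upper half-plane into itself), which is the correct selection for these Cauchy transforms. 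This step relies directly on the graph operations and generating series for $\Upsilon$ already established in Section~2.

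Second, I would assemble $G(z)$ from these partial resolvents by a Schur-complement computation at $1/2$. Writing the resolvent equations for the finitely many matrix entries linking $1/2$ to its effective neighbors in the reduced structure, and eliminating the variables associated with the self-similar sub-pieces by means of $P_*$ and $Q_*$, should collapse the system into a single scalar equation, which is precisely the cubic \eqref{EqV0} in an auxiliary quantity $V(z)$. The Schur-complement identity should then yield a closed-form expression for $G(z)$ of the shape
\[
G(z) \;=\; \pm\,\frac{V(z)}{(\beta V(z)+1)^2},
\]
so that Stieltjes inversion produces \eqref{EqDmuDlam} verbatim. Uniqueness of a root of \eqref{EqV0} with $\im V(z)>0$ for $\im z>0$ follows from the Herglotz character of $G$, and by a continuity/limiting argument this uniqueness extends to almost every real $z$.

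Third, I would extract the qualitative conclusions. Since $P_*,Q_*$ and the roots of the cubic \eqref{EqV0} are algebraic in $z$, the function $G$ extends from the upper half-plane to $\mathbb R$ outside the finite set of branch points where either $P_*,Q_*$ ramify or the cubic has a multiple root. At $z\in\mathbb R$ away from this finite set, $z\in\supp(\mu_{m,1/2})$ if and only if \eqref{EqV0} admits a (necessarily unique) root with positive imaginary part; where all three roots are real, $G$ extends real-analytically and contributes no absolutely continuous mass. This gives absolute continuity, identifies the support as the closure of the indicated set, and realises it as a finite union of closed intervals since its complement in a bounded spectral interval is a semi-algebraic open set with finitely many components. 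Continuity of the density across the branch points follows from the standard local square-root behavior of the roots of an algebraic equation near a simple collision, and analyticity elsewhere is immediate. The principal obstacle, I expect, lies in the first two steps: choosing the right system of auxiliary Green's functions adapted to the asymmetric roles of $a$ and $b$ on $\Upsilon$, and carrying through the Schur-complement elimination so that the cubic \eqref{EqV0} and the formula \eqref{EqDmuDlam} emerge with exactly the stated coefficients. Once the algebraic identification of $G$ in terms of $V$ is in hand, the passage to the spectral measure and the structural claims are standard facts about Herglotz functions with algebraic boundary values.
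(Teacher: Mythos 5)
Your overall strategy coincides with the paper's: compute the Cauchy--Stieltjes transform of $\mu_{m,1/2}$ by exploiting the self-similar decomposition of $\Upsilon$ (the paper does this with first-return generating functions and the graph operations $\star$, $\cup$, $\delta+$, which is the combinatorial form of your Schur-complement elimination at $1/2$), obtain the cubic \eqref{EqV0} for the transform $V$ of the self-similar subgraph $\Delta$, derive $S_{\mu_{m,1/2}}(z)=V(z)/(\beta V(z)+1)^2$, and invert via the Silverstein--Choi theorem. Up to bookkeeping (the paper's $P_*,Q_*$ are the negative reciprocals of the branch Cauchy transforms rather than the transforms themselves, and your sign ambiguity in $G=\pm V/(\beta V+1)^2$ must resolve to $+$), this all matches, and your finiteness argument for the number of intervals is the same algebraic-function idea as the paper's Lemma \ref{LmRootsIrrat}.

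There are, however, two genuine gaps. First, your claim that uniqueness of the root of \eqref{EqV0} with positive imaginary part ``follows from the Herglotz character of $G$'' does not hold up: the Herglotz property of $V$ only guarantees that \emph{at least one} root lies in $\mathbb H_+$ when $\im z>0$. The cubic has genuinely non-real coefficients there (since $\im P_*>0$ and $\im Q_*>0$), so a priori two of its three roots could lie in $\mathbb H_+$; without uniqueness the support criterion in the theorem is not even well posed, because a real $z$ could admit an upper-half-plane root that is not the boundary value $V(z)$. The paper needs a separate argument (Proposition \ref{PropVUniqPos}): a continuity/deformation in the parameters down to $\alpha=0$, $z=0$, where the cubic becomes $v^3+\I v^2+v+\I=0$ and visibly has exactly one root in $\mathbb H_+$, roots being prevented from crossing $\mathbb R$ along the deformation by Lemma \ref{LmImag}. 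Second, formula \eqref{EqDmuDlam} and the continuity of the density require $\beta V(z)+1\neq 0$ for all real $z$, which you do not address. The paper rules this out by noting that a zero of $\beta V+1$ on $\mathbb R$ would force a pole of order at least $2$ of $S_{\mu_{m,1/2}}$, impossible for the Cauchy transform of a finite positive measure, whose real isolated singularities are necessarily simple poles (Remark \ref{RemConj}). Both points are short, but neither is a consequence of the general Herglotz/algebraic-function machinery you invoke.
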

\noindent In this paper by $\sigma(A)$ we denote the spectrum of an operator $A$.
\begin{Th}\label{ThSameSign} If $\alpha,\beta$ are of the same sign then for every $x\in (0,1)$ the spectrum of $H_{m,x}$ is an interval and one has \begin{equation}\label{EqSameSignTh}\sigma(H_{m,x})=\supp(\mu_{m,x})=[-2|\alpha+\beta|,2|\alpha+\beta|].\end{equation} \end{Th}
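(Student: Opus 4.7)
\emph{Proof plan.} The argument has three steps: establishing the easy upper bound, obtaining the matching lower bound at $x=1/2$ via Theorem~\ref{ThMain}, and propagating the lower bound to arbitrary $x\in(0,1)$.

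For the upper bound, I would apply the triangle inequality in the left regular representation of $F$ to obtain
$$\|H_{m,x}\|\le 2|\alpha|+2|\beta|,$$
which equals $2|\alpha+\beta|$ precisely when $\alpha,\beta$ share a sign. This immediately gives $\sigma(H_{m,x})\subseteq[-2|\alpha+\beta|,2|\alpha+\beta|]$ for every $x\in(0,1)$.

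For the lower bound at $x=1/2$, I would invoke Theorem~\ref{ThMain}: since $\supp(\mu_{m,1/2})\subseteq\sigma(H_{m,1/2})$, it suffices to show that for every $z$ in the interior of $[-2|\alpha+\beta|,2|\alpha+\beta|]$ the cubic \eqref{EqV0} admits a solution with positive imaginary part. For such $z$ the discriminant $z^2-4(\alpha+\beta)^2$ is strictly negative, so the branch convention in \eqref{EqPQFormulas0} forces $P_*(z)$ to lie in the open upper half plane. Consequently the cubic \eqref{EqV0} has non-real coefficients and cannot have only real roots. A continuity argument in $z$, combined with the uniqueness of the positive-imaginary-part root asserted just before Theorem~\ref{ThMain}, produces a well defined branch $V(z)$ in the upper half plane throughout the open interval. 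Taking closures yields $\supp(\mu_{m,1/2})\supseteq[-2|\alpha+\beta|,2|\alpha+\beta|]$, and together with the upper bound this gives $\sigma(H_{m,1/2})=[-2|\alpha+\beta|,2|\alpha+\beta|]$.

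To extend to general $x\in(0,1)$, I would rely on Savchuk's description of the Schreier graphs $\Gamma_x$ together with the Hulanicki type theorem for graph coverings from~\cite{DudkoGrigorchuk-Shape-18}. All graphs $\Gamma_x$ are amenable and share a common asymptotic shape with $\Upsilon=\Gamma_{1/2}$; by constructing the appropriate covering (or weak containment) relation between $\Gamma_x$ and $\Gamma_{1/2}$ one gets the spectral inclusion $\sigma(H_{m,1/2})\subseteq\sigma(H_{m,x})$, which combined with the upper bound forces equality. This last step is the principal obstacle: it requires a careful case analysis depending on whether the orbit of $x$ consists of dyadic or non-dyadic points, and verifying in each case that the comparison map intertwines the weights $\alpha,\beta$ properly. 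Step 2 also has a subtle ingredient, namely tracking the correct branch of the cubic globally in $z$, but this is straightforward once the algebraic picture is laid out.
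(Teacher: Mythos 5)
Your Steps 1 and 2 are essentially sound (the norm bound is the paper's argument too, and the lower bound at $x=1/2$ amounts to the paper's Lemma \ref{LmImag}), though note one loose point in Step 2: from $\im P_*(z)>0$ you conclude the cubic ``cannot have only real roots,'' but that only yields \emph{some} non-real root, which could lie in the lower half plane and need not be $V(z)$. What you actually need, and what the paper proves, is that \eqref{EqV0} has \emph{no} real root for $|z|<2|\alpha+\beta|$; combined with $\im V\geqslant 0$ on $\overline{\mathbb H}_+$ this forces $\im V(z)>0$ there.

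The genuine gap is Step 3. The Hulanicki type theorem (Theorem \ref{ThGraphCov}) cannot deliver $\sigma(H_{m,1/2})\subseteq\sigma(H_{m,x})$: its hypotheses require either a finite base or a cover of subexponential growth, and the graphs $\Gamma_x$ are infinite and of exponential growth (the recursion $\Delta_v=(\Phi_1\star\Delta_v)\star(\Psi_{-1}\star\Delta_v)$ doubles at each level); moreover the entire Appendix of the paper is devoted to showing that the conclusion \emph{fails} for coverings of infinite amenable graphs, so ``amenable with a common asymptotic shape'' is exactly the heuristic one must not use. There is also no covering $\Gamma_x\to\Gamma_{1/2}$ in general. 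Finally, even a valid spectral inclusion would only identify $\sigma(H_{m,x})$, not $\supp(\mu_{m,x})$, and the theorem asserts the latter as well (Example \ref{Ex5vertices} shows the Kesten support at a vertex can be strictly smaller than the spectrum). The paper's route is different and avoids all of this: by Savchuk's description every $\Gamma_x$ contains a copy of the subgraph $\Delta$ rooted at some vertex $y$, so $\Gamma_y=\Delta_v\cup\widetilde\Gamma_y$, which via Lemmas \ref{LmPF} and \ref{LmGraphSum} gives the Stieltjes identity
\begin{equation*}
\frac{1}{S_{\mu_{\Gamma,y}}(z)}=z+\frac{1}{S_{\mu_{\Delta,v}}(z)}+\frac{1}{S_{\mu_{\widetilde\Gamma,y}}(z)}.
\end{equation*}
If an open subinterval of $(-2|\alpha+\beta|,2|\alpha+\beta|)$ missed $\supp(\mu_{\Gamma,y})$, the left side would have vanishing imaginary boundary values on a subsegment, while Lemma \ref{LmImag} forces $\im(1/S_{\mu_{\Delta,v}})$ to have strictly negative boundary $\liminf$ there and the third term contributes nonpositively --- a contradiction. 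You would need to replace your covering argument with this (or with the local-containment Proposition \ref{PropLocCont}, which still would not give the support statement) to close the proof.
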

\noindent In addition, we compute the asymptotics of the return probabilities for the simple random walk on $\Upsilon=\Gamma_{\frac{1}{2}}$. As usual, let $O(s)$ stand for a function defined near zero (or infinity) such that $\limsup\ |O(s)/s|$ is finite when $s\to 0$ (respectively, when $s\to\infty$).
\begin{Th}\label{ThAsympMoments} Consider the simple random walk on $\Upsilon$. Let $p_n$ be the probability starting at $\frac{1}{2}$ to return after $n$ steps to $\frac{1}{2}$. Then there exist constants $A,B$ such that one has:
$$p_{2n}=An^{-\frac{3}{2}}+O(n^{-2}),\;\;p_{2n+1}=
Bn^{-\frac{3}{2}}+O(n^{-2}),\;\;\text{when}\;\;n\to\infty.$$
\end{Th}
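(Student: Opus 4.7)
\medskip

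\noindent\textbf{Proof plan for Theorem \ref{ThAsympMoments}.} With $\alpha=\beta=1/4$ the operator $H_{m,1/2}$ is precisely the Markov operator $M$ of the simple random walk on $\Upsilon$, so
\[
p_n \;=\; \langle M^n\delta_{1/2},\delta_{1/2}\rangle \;=\; \int_{-1}^{1}z^{n}\,d\mu_{m,1/2}(z).
\]
By Theorem \ref{ThSameSign} the support of $\mu_{m,1/2}$ is the single interval $[-1,1]$, and by Theorem \ref{ThMain} the measure is absolutely continuous with density $\rho(z)=\tfrac{1}{\pi}\im\tfrac{V(z)}{(\beta V(z)+1)^{2}}$ that is continuous on $[-1,1]$ and real-analytic off a finite set of interior points. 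Hence the leading asymptotics of $p_{n}$ is dictated entirely by the singular behavior of $\rho$ at the two endpoints $z=\pm 1$.

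\medskip

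\noindent\emph{Step 1 (Puiseux expansions at the endpoints).} Specializing \eqref{EqPQFormulas0} to $\alpha=\beta=1/4$, the branch points of $P_{*}$ and $Q_{*}$ lie at $z=\pm 1$, and each function admits a convergent expansion in powers of $\sqrt{1\mp z}$ near the corresponding endpoint, with explicit values $P_{*}(\pm 1)=\pm 1/2$, $Q_{*}(\pm 1)=1/4\pm 1/2$, etc. Substituting these into the cubic \eqref{EqV0} and applying a Newton-polygon / implicit-function argument in the Puiseux variable $w=\sqrt{1-z}$, the unique root with $\im V(z)>0$ satisfies
\[
V(z)\;=\;V_{0}^{+}+V_{1}^{+}\sqrt{1-z}+O(1-z)\quad\text{as }z\to 1^{-},
\]
and analogously $V(z)=V_{0}^{-}+V_{1}^{-}\sqrt{1+z}+O(1+z)$ near $z=-1$. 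Inserting these into \eqref{EqDmuDlam} yields
\[
\rho(z)\;\sim\;c_{+}\sqrt{1-z}\text{ as }z\to 1^{-},\qquad \rho(z)\;\sim\;c_{-}\sqrt{1+z}\text{ as }z\to -1^{+},
\]
for explicit constants $c_{\pm}$ determined by $V_{0}^{\pm}$, $V_{1}^{\pm}$ and $\beta=1/4$, with $c_\pm\neq 0$ (equivalently $V_{1}^{\pm}\neq 0$).

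\medskip

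\noindent\emph{Step 2 (Asymptotic integration).} Away from the endpoints $\rho$ is bounded, so for any $\delta>0$ we have $\bigl|\int_{|z|\leqslant 1-\delta}z^{n}\rho(z)\,dz\bigr|\leqslant (1-\delta)^{n}\!\int\rho$, which is exponentially small in $n$; the same exponential bound holds around any interior branch point, which sits at some $|z_{0}|<1$. For the two endpoint windows, the substitutions $z=1-u/n$ and $z=-1+v/n$ together with dominated convergence give
\[
\int_{-1}^{1}\!z^{n}\rho(z)\,dz \;=\; \Gamma\!\left(\tfrac{3}{2}\right)\bigl(c_{+}+(-1)^{n}c_{-}\bigr)\,n^{-3/2}\;+\;O(n^{-5/2}).
\]
Reading off even and odd $n$ yields $A=\Gamma(3/2)(c_{+}+c_{-})$ and $B=\Gamma(3/2)(c_{+}-c_{-})$, and since $O(n^{-5/2})\subset O(n^{-2})$ the stated asymptotic follows.

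\medskip

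\noindent\emph{Main obstacle.} The delicate step is Step 1: one must select the correct Puiseux branch of the cubic \eqref{EqV0} consistently on both sides of each endpoint, i.e.\ track the analytic continuation of the root $V(z)$ with $\im V>0$ across $z=\pm 1$, and verify the non-vanishing $V_{1}^{\pm}\neq 0$ (this is what ensures the decay is genuinely of order $n^{-3/2}$ rather than faster). A cleaner equivalent, matching the framework of Section 2, is to work directly with the algebraic generating series $F(t)=\sum_{n} p_{n}t^{n}$ computed there: one checks that on the unit circle $F$ has only square-root branch points, located at $t=\pm 1$, and invokes the transfer theorem to read off the coefficient asymptotics.
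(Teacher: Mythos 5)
Your plan is correct in substance, and its core computation --- a square-root Puiseux expansion of the relevant root of the cubic at the two dominant singularities, with the branch identified among the three simple roots of the limiting cubic --- is exactly what the paper does. Where you genuinely diverge is in the two surrounding steps. First, the ``transfer'': the paper never integrates the density against $z^n$; it works in the $t$-variable with the algebraic generating series $P_{\Upsilon,1/2}(t)$, derives $P_{\Upsilon,1/2}(t)=\widetilde C_{1}+\widetilde C_{2}\sqrt{1-t}+O(1-t)$ near $t=1$ (and similarly at $t=-1$), splits into even and odd parts via $G(s)=\tfrac12(P(\sqrt s)+P(-\sqrt s))$, and invokes Flajolet--Odlyzko singularity analysis. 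Your Darboux-style route through $\int_{-1}^{1}z^{n}\rho(z)\,dz$ is equally valid, but it consumes Theorems \ref{ThMain} and \ref{ThSameSign} as inputs, whereas the paper's argument is self-contained within the generating-function framework of Section 2 (your closing remark correctly identifies this as the cleaner equivalent). Second, the branch selection you flag as the main obstacle: rather than tracking the root with $\im V>0$ across the endpoint, the paper uses the elementary observation that the coefficients of $x(t)$ are return probabilities, hence nonnegative, so $x$ is increasing on $[0,1)$ and $x(-1)<x(1)$; this pins down the correct branch among $U_1,U_2,U_3$ at each endpoint, and the explicit values $U_j'(0)$ then give the non-vanishing of the square-root coefficient. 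Two small inaccuracies in your write-up, neither fatal: the error term you can extract from $V=V_{0}^{\pm}+V_{1}^{\pm}\sqrt{1\mp z}+O(1\mp z)$ is $O(n^{-2})$, not $O(n^{-5/2})$ (the latter needs the next Puiseux term; $O(n^{-2})$ is all the theorem asks for); and for $\alpha=\beta=1/4$ the function $Q_{*}$ has a branch point at the interior point $z=0$ as well as at $z=1$, though as you note interior singularities only contribute exponentially small terms.
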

\noindent
The formula \eqref{EqSameSignTh} shows that in the case of simple random walk (\ie when $m=\tfrac{1}{4}(a+b+a^{-1}+b^{-1})$) one has $\sigma(H_{m,1/2})=\supp(\mu_{m,1/2})=[-1,1]$. The asymptotic behavior of the spectral distribution $N_{1/2}(s)$ of the Laplace operator $\id-H_{m,1/2}$ with respect to the vector $\delta_{1/2}$ is given in the text statement.
\begin{Prop}\label{PropIntDens} For some constant $C>0$ one has: $N_{1/2}(s)=Cs^{\frac{3}{2}}+O(s^2)$ near zero, where $s>0$.
\end{Prop}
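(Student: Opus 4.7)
The plan is to read off the claimed asymptotics directly from the spectral density provided by Theorem \ref{ThMain}. For the simple random walk, $\alpha=\beta=\tfrac14$, the support of $\mu_{m,1/2}$ equals $[-1,1]$ by Theorem \ref{ThSameSign}, and since $\Delta=\id-H_{m,1/2}$ one has $N_{1/2}(s)=\eta_{1/2}([0,s])=\mu_{m,1/2}([1-s,1])$. Hence the problem reduces to the local behaviour of $\rho(z):=\tfrac{\dd\mu_{m,1/2}}{\dd\lambda}(z)$ in a one-sided neighbourhood of the upper spectral edge $z=1$. Substituting $\alpha=\beta=\tfrac14$ into \eqref{EqPQFormulas0} gives $P_*(z)=z/2+\tfrac{\I}{2}\sqrt{1-z^2}$ and $Q_*(z)=z/2+\tfrac14+\tfrac{\I}{2}\sqrt{z(1-z)}$ for $z\in(-1,1)$; since $\beta^2-\alpha^2=0$, the cubic \eqref{EqV0}, cleared of $\beta^4=\tfrac{1}{256}$, becomes
\[
f(v,z):=v^3+16(P_*(z)+Q_*(z))v^2+256P_*(z)Q_*(z)\,v+256P_*(z)=0.
\]

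At $z=1$ this specialises to $v^3+20v^2+96v+128=(v+4)(v^2+16v+32)=0$, with three distinct real roots $-4$ and $-8\pm 4\sqrt2$. Setting $u=\sqrt{1-z}$, the coefficients become $P_*=\tfrac12+\tfrac{\I}{2}u\sqrt{2-u^2}-\tfrac{u^2}{2}$ and $Q_*=\tfrac34+\tfrac{\I}{2}u\sqrt{1-u^2}-\tfrac{u^2}{2}$, which are analytic in $u$ at $u=0$, so by the implicit function theorem each simple root $v_0$ extends to an analytic branch $v_0+c_1u+c_2u^2+\ldots$. A first order perturbation calculation, using $f_v(-4,0)=-16$ and $f_v(-8\pm4\sqrt2,0)=32(2\mp\sqrt2)$, shows that among the three branches only the one issuing from $v_0=-8+4\sqrt2$ carries the root into the upper half plane for $u>0$; this is therefore the continuation of $V(z)$, and $V(z)\to-8+4\sqrt2$ as $z\to1^-$. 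The symmetries $P_*(-u)=\overline{P_*(u)}$ and $Q_*(-u)=\overline{Q_*(u)}$ for real $u$, together with simplicity of the root, give $V(-u)=\overline{V(u)}$; hence $c_k$ is real for even $k$ and purely imaginary for odd $k$.

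Set $g(v)=v/(\tfrac{v}{4}+1)^2=16v/(v+4)^2$. Since $v_0\ne-4$, $g$ is analytic at $v_0$ with real Taylor coefficients, so the same parity propagates to $g(V(u))$ and only odd-indexed Taylor coefficients contribute to $\im g(V(u))$. Explicit first order data yields $g'(v_0)=11+8\sqrt2$ and $c_1=4\I(1+\sqrt2)$, whence $\pi\rho(z)=\im g(V(u))=a_1u+O(u^3)$ with $a_1=g'(v_0)\im c_1=4(1+\sqrt2)(11+8\sqrt2)>0$. Changing variables $z=1-u^2$, $\dd z=-2u\,\dd u$ in $N_{1/2}(s)=\int_{1-s}^{1}\rho(z)\,\dd z$ yields
\[
N_{1/2}(s)=\frac{2}{\pi}\int_0^{\sqrt s}\bigl(a_1u^2+O(u^4)\bigr)\,\dd u=\frac{2a_1}{3\pi}s^{3/2}+O(s^{5/2}),
\]
which proves the proposition with $C=2a_1/(3\pi)$ and in fact a sharper remainder $O(s^{5/2})\subset O(s^2)$. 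The main technical point is the branch selection at $z=1$: the root $v_0=-4$ must be ruled out (there $(\beta v+1)^2$ vanishes and the resulting expression would be incompatible with a positive measure), and one must verify $\im c_1\ne 0$ at the surviving branch, which is precisely what pins the Novikov--Shubin type exponent to $3/2$ rather than a larger value.
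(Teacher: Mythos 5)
Your proof is correct, and the numbers check out: the cubic at $z=1$ factors as you state, $f_v(-4,0)=-16$, $f_v(-8\pm4\sqrt2,0)=32(2\mp\sqrt2)$, $c_1=4\I(1+\sqrt2)$ on the surviving branch, and $g'(v_0)=11+8\sqrt2$; your leading coefficient $a_1=4(1+\sqrt2)(11+8\sqrt2)$ agrees with what the paper's route produces (there $\widetilde C_2=(11+8\sqrt2)C_2$ with $C_2=-4(1+\sqrt2)$). The core technique is the same as the paper's -- an implicit-function-theorem expansion of a branch of the cubic at the spectral edge in the variable $\sqrt{1-z}$, yielding a square-root vanishing of the density, then integration -- but you route it differently. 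The paper works in the generating-function variable $t$, expands $x(t)=P_{\Delta,v}(t)$ at $t=1$ via $t=1-s^2$, selects the branch by monotonicity of $x$ on $[0,1)$ (nonnegativity of return probabilities), and only then passes to the density via the Cauchy--Stieltjes transform and Theorem \ref{ThStilt}; you instead start from the already-proved density formula of Theorem \ref{ThMain}, work directly in $z$, and select the branch by computing $\im c_1$ for all three roots and invoking Lemma \ref{LmImag} and Proposition \ref{PropVUniqPos} to identify the unique root with positive imaginary part with $V(z)$. Your branch selection is a more computational but self-contained substitute for the paper's monotonicity argument, and your parity observation ($c_k$ real for even $k$, imaginary for odd $k$, propagating through $g$) is a genuine refinement: it kills the $O(u^2)$ term in $\im g(V)$ and upgrades the remainder to $O(s^{5/2})$, sharper than the stated $O(s^2)$. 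The one point worth spelling out slightly more is the identification $V(1-u^2)=W_{-8+4\sqrt2}(u)$ for small $u>0$: since $|z|<1$ forces all three roots off the real axis (Lemma \ref{LmImag}) and $\im V\geqslant 0$ as a boundary value of a Stieltjes transform, exactly one root lies in the upper half-plane and it is $V$; your first-order signs then pin down which analytic branch that is.
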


As Savchuk showed in \cite{Savchuk-ShcreierThompson-10}, Schreir graphs $\Gamma_x$ of the action of $F$ on $[0,1]$ are amenable. Observe also that the action of $F$ on $[0,1]$ is hyperfinite, \ie the partition on orbits is an increasing union of finite Borel equivalence relations. Indeed, as it was noted in \cite{GrigorchukNekrashevychSushchanskiy-00}, the orbit equivalence relation of the action of $F$ on $[0,1]$ is isomorphic (modulo countable set of points) to the tail equivalence relation on $\{0,1\}^{\mathbb{N}}$ with the standard Borel structure. The latter is hyperfinite by \cite{DoughertyJacksonKechris-94}.

 Given a representation $\pi$ of a group $G$ in a Hilbert space and an element $f\in\mathbb C[G]$ set
$$\pi(f)=\sum\limits_{g\in\supp(f)}f(g)\pi(g).$$ As a corollary of Theorem \ref{ThSameSign} using Theorem 1 from \cite{DudkoGrigorchuk-Spectrum-17} we obtain:
\begin{Co} Let $\kappa$ be the Koopman representation of $F$ on $L^2([0,1],\lambda)$. Let $m$ be as above with $\alpha,\beta\in \mathbb R\setminus \{0\}$ of the same sign. Then $\sigma(\kappa(m))=[-2|\alpha+\beta|,2|\alpha+\beta|]$.
\end{Co}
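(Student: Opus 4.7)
The plan is to combine Theorem \ref{ThSameSign} with Theorem 1 from \cite{DudkoGrigorchuk-Spectrum-17}, which is tailored precisely to this situation: it provides a direct-integral decomposition of the Koopman representation of a countable group acting on a measure space $(X,\mu)$ with a quasi-invariant measure, along the orbit equivalence relation. Concretely, that theorem identifies $\kappa(m)$ with a direct integral (over $[0,1]$ with respect to $\lambda$) of the orbitwise Laplace-type operators $H_{m,x}$ (suitably conjugated by the Radon--Nikodym cocycle of $\lambda$ under $F$), and then expresses $\sigma(\kappa(m))$ as the $\lambda$-essential union of the spectra $\sigma(H_{m,x})$.

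First, I would verify the hypotheses of that theorem in our setting. The action of $F$ on $([0,1],\lambda)$ by piecewise-linear homeomorphisms preserves the measure class of $\lambda$, and the orbit equivalence relation is hyperfinite, as observed in the paragraph immediately preceding the corollary (via identification with the tail equivalence relation on $\{0,1\}^{\mathbb N}$ and the result of \cite{DoughertyJacksonKechris-94}). These are the structural ingredients on which Theorem 1 of \cite{DudkoGrigorchuk-Spectrum-17} rests.

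Second, I would apply Theorem \ref{ThSameSign}: since $\alpha,\beta\in\mathbb R\setminus\{0\}$ are of the same sign, for every $x\in(0,1)$ one has $\sigma(H_{m,x})=[-2|\alpha+\beta|,2|\alpha+\beta|]$. The exceptional points $0$ and $1$ are fixed by $F$, so the corresponding orbits $\{0\}$ and $\{1\}$ are $\lambda$-null and contribute nothing to the essential union. The essential union of a $\lambda$-a.e.\ constant family of closed sets all equal to $I=[-2|\alpha+\beta|,2|\alpha+\beta|]$ is $I$ itself, giving $\sigma(\kappa(m))=I$.

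The only technical obstacle I anticipate is matching the formal statement of Theorem 1 of \cite{DudkoGrigorchuk-Spectrum-17} to the present notation: one must check that under the direct-integral decomposition the fiber at $x$ is unitarily equivalent (via multiplication by the square-root of the Radon--Nikodym derivative) to $H_{m,x}$ acting on $\ell^2(Fx)$, so that fiberwise spectra are unchanged. Once this identification is in place, the conclusion is immediate from the uniformity of $\sigma(H_{m,x})$ in $x$ furnished by Theorem \ref{ThSameSign}.
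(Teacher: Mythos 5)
Your proposal follows exactly the route the paper intends: the corollary is stated as an immediate consequence of Theorem \ref{ThSameSign} combined with Theorem 1 of \cite{DudkoGrigorchuk-Spectrum-17}, with the hyperfiniteness of the orbit equivalence relation (noted in the preceding paragraph) supplying the hypothesis of that theorem, and the constancy of $\sigma(H_{m,x})=[-2|\alpha+\beta|,2|\alpha+\beta|]$ over $x\in(0,1)$ making the essential union trivial to compute. This matches the paper's (implicit) argument, so no further comparison is needed.
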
\noindent
As an interesting example we show:
\begin{Prop}\label{Co-11} For $\alpha=-1,\beta=1$ one has:
$$\supp(\mu_{m,1/2})=[z_1,z_2]\cup [0,4],$$ where $z_1\approx -2.766,z_2\approx -0.062$ are the real roots of the polynomial $z^4-2z^2+16z+1$.
\end{Prop}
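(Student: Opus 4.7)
The plan is to apply Theorem~\ref{ThMain} with $\alpha=-1$, $\beta=1$ and describe explicitly the set of real $z$ for which \eqref{EqV0} admits a solution $V$ with positive imaginary part. Because $\alpha+\beta=0$ and $\beta^2-\alpha^2=0$, formula \eqref{EqPQFormulas0} gives $P_*(z)=z$ for every $z$, and \eqref{EqV0} simplifies to
$$v^3+(z+Q_*)v^2+zQ_*v+z=0,$$
which factors conveniently as $v(v+z)(v+Q_*)=-z$. The second root satisfies $Q_*^2-(z+2)Q_*+(2z+1)=0$; it is real on $(-\infty,0]\cup[4,\infty)$ and has strictly positive imaginary part $\tfrac12\sqrt{4z-z^2}$ on $(0,4)$.

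The first step is to show $[0,4]\subseteq\supp(\mu_{m,1/2})$. On $(0,4)$ no real $v$ satisfies the cubic, because taking imaginary parts forces $v(v+z)\,\im Q_*=0$, hence $v\in\{0,-z\}$, and each of these choices makes the left hand side vanish rather than equal $-z\ne 0$. Thus all three roots are non-real throughout $(0,4)$. A perturbative analysis at the two endpoints pins down the sign of $\im V$ for at least one of them: near $z=0^+$ the cubic degenerates to $v^2(v+1)=0$ and the double zero splits as $v\approx\pm i\sqrt{z}$, while near $z=4^-$ the three real roots of $v^3+7v^2+12v+4$, namely $-2$ and $\tfrac12(-5\pm\sqrt{17})$, receive first order corrections of order $\sqrt{4-z}$ coming from $\im Q_*$, one of which is positive. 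Continuity combined with the absence of real roots then propagates the existence of exactly one $V(z)$ with $\im V(z)>0$ across the whole interval.

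For $z$ in the real region $(-\infty,0]\cup[4,\infty)$ the cubic has real coefficients, so a root with positive imaginary part exists iff the discriminant $\Delta_v$ of the cubic in $v$ is negative. The main computational step is to write $\Delta_v$, regarded as a polynomial in $z$ and $Q_*$, in the form $R(z)+S(z)Q_*$ by repeatedly reducing powers of $Q_*$ via $Q_*^2=(z+2)Q_*-(2z+1)$. I expect this to produce $R(z)=-8z(z^2+2z-1)$ and $S(z)=4z(z^2+2z-3)$, so that $\Delta_v=0$ is equivalent (for $z\ne 0$) to $Q_*=\dfrac{2(z^2+2z-1)}{z^2+2z-3}$. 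Substituting this rational expression back into the defining quadratic for $Q_*$ and simplifying, for which the change of variable $u=z^2+2z$ makes the algebra transparent, yields precisely $z^4-2z^2+16z+1=0$.

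Finally one verifies that this quartic has exactly two real roots $z_1<z_2<0$: its derivative $4(z^3-z+4)$ has a unique real zero at some $z_*\approx-1.80$, and the value of the quartic at $z_*$ is negative, which combined with $f(\pm\infty)=+\infty$ forces exactly two real roots. Checking the sign of $\Delta_v$ at one representative point in each of the four intervals $(-\infty,z_1)$, $(z_1,z_2)$, $(z_2,0)$ and $(4,\infty)$ (e.g.\ $-10,-1,-0.01,5$) shows that $\Delta_v<0$ exactly on $(z_1,z_2)$. Combined with the preceding step this gives $\supp(\mu_{m,1/2})=[z_1,z_2]\cup[0,4]$. The main obstacle I anticipate is the algebraic reduction of $\Delta_v$ modulo the quadratic relation for $Q_*$; a secondary subtlety is the continuity argument on $(0,4)$, where the cubic's coefficients are not real and the discriminant criterion does not apply directly.
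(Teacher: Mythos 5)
Your proposal is correct and follows essentially the same route as the paper: both establish $[0,4]\subset\supp(\mu_{m,1/2})$ by showing the cubic has no real roots there (your factorization $v(v+z)(v+Q_*)=-z$ is a specialization of the paper's Lemma \ref{LmImag}), and both handle the real-coefficient region via the sign of the cubic's discriminant, whose vanishing locus reduces to the quartic $z^4-2z^2+16z+1$ (your claimed $R(z)=-8z(z^2+2z-1)$, $S(z)=4z(z^2+2z-3)$ check out, and eliminating $Q_*$ via $Q_*^2=(z+2)Q_*-(2z+1)$ does yield that quartic). The only differences are cosmetic: the paper writes the discriminant with the explicit radical and confines attention to $[-4,4]$ via the norm bound $\|H_\Delta\|\leqslant 4$, whereas you eliminate $Q_*$ algebraically and test signs on the unbounded intervals directly, and the paper gets $\im V>0$ on $(0,4)$ for free from $\im V\geqslant 0$ (Remark \ref{RemMuDel}) rather than by your endpoint perturbation.
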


\section{Weighted graphs and generating functions.}
The aim of this section is to establish the framework for obtaining information about the moments of the spectral measures of the Laplace type operators $H_{m,x}$ described above.
\subsection{General setting.}\label{SubsecGeneral}
Let a countable group $G$ act on a set $X$. Fix an element $m\in\mathbb C[G]$. We will view $m$ as a function from $G$ to $\mathbb C$ and denote by $\supp(m)$ the set of elements $g\in G$ such that $m(g)\neq 0$. Consider a symmetric  ($S=S^{-1})$ generating set $S=\{g_1,g_2,\ldots g_n\}$ that contains $\supp(m)$. Given a point $x\in X$ we consider the corresponding Schreier graph $\Gamma_x$. To study the spectrum of the operator $H_{m,x}$ on $l^2(\Gamma_x)$ it is convenient to use weighted directed graphs.
\begin{Def}\label{DefMarked} For any undirected graph $\Gamma$ we call a directed edge an edge together with a choice of its initial and terminal vertices. By weighted (directed) graph we mean a graph with an assignment of a weight  $\omega_e\in\mathbb C$ for every directed edge $e$.
\end{Def}\noindent Note that every edge of a graph $\Gamma$ gives rise to two directed edges.
Thus, every edge of a graph $\Gamma$ has two weights assigned to it which might or might not coincide. This also concerns the loops. Per our definition, weighted graphs are always directed.
\begin{Def} Given $m\in\mathbb C[G]$ and any Schreier graph $\Gamma$ of $G$ we define by $\Gamma_m$ the weighted graph with weight $m(g)$ assigned to each directed edge $(v,gv)$, $v\in \Gamma,g\in S$.\end{Def}

Given any weighted graph $\Gamma$ of bounded degree with uniformly bounded weight (\ie there exists $D>0$ such that $|\omega_e|\leqslant D$ for each directed edge $e$ of $\Gamma$) one can introduce an associated Laplace type operator $H_\Gamma$ as follows. For a vertex $v$ of $\Gamma$ let $E_v$ denote the set of directed edges ending at $v$. For a directed edge $e$ let $i_e$ stand for its initial vertex. Set
\begin{equation}\label{EqHeckeGamma}(H_\Gamma f)(v)=\sum\limits_{e\in E_v}\omega_e f(i_e)\;\;\forall v\in\Gamma.
\end{equation} Observe that uniform boundedness of the degree and of the weight implies that the operator $H_\Gamma$ is bounded. Moreover, in the case of real symmetric weight the operator $H_\Gamma$ is self-adjoint.
In the case of the weighted graph associated to $m\in \mathbb C[G]$ the above Laplace type operator coincides with the one defined in \eqref{EqHmGen}.

Further, each path $\gamma$ in $\Gamma$ with a chosen starting and terminal vertices can be written as a chain of directed edges $\gamma=(e_1,\ldots,e_k)$. Set $$\omega_\gamma=\prod\limits_{e\in\gamma} \omega_e.$$
For a vertex $v$ of $\Gamma$ denote by $\mathcal P_{\Gamma,v}$ the set of all paths (including trivial) starting and ending at $v$. Denote by $\mathcal F_{\Gamma,v}$ the set of non-trivial paths from $\mathcal P_{\Gamma,v}$ which do not visit $v$ except at starting and ending vertices (the paths of first-return).
Let $|\gamma|$ stand for the length (total number of edges) of the path $\gamma$. For $n\geqslant 0$ set
\begin{equation}\label{EqPGammavnDef}\mathcal P_{\Gamma,v}^{(n)}=\{\gamma\in \mathcal P_{\Gamma,v}:|\gamma|=n\},\;\; p_{\Gamma,v}^{(n)}=\sum\limits_{\gamma\in \mathcal P _{\Gamma,v}^{(n)}}\omega_\gamma.\end{equation} Similarly define
$\mathcal F_{\Gamma,v}^{(n)}$ and $f_{\Gamma,v}^{(n)}$:
\begin{equation}\label{EqFGammavnDef}\mathcal F_{\Gamma,v}^{(n)}=\{\gamma\in \mathcal F_{\Gamma,v}:|\gamma|=n\},\;\; f_{\Gamma,v}^{(n)}=\sum\limits_{\gamma\in \mathcal F _{\Gamma,v}^{(n)}}\omega_\gamma.\end{equation}
Note that in the case when $\omega_e$ are transition probabilities for a random walk on $\Gamma$ the value $p_{\Gamma,v}^{(n)}$ (correspondingly $f_{\Gamma,v}^{(n)}$)  is the probability that starting at $v$ the random walk after $n$ steps returns to $v$ (correspondingly, first time returns to $v$).
For any weighted graph $\Gamma$ one has
\begin{equation}\label{EqpgvHG}p_{\Gamma,v}^{(n)}=(H_\Gamma^n\delta_v,\delta_v)\end{equation} where $\delta_v\in l^2(\Gamma)$ is the Kroneker delta-function at the vertex $v$. In particular, if $H_\Gamma$ is self-adjoint then $p_{\Gamma,v}^{(n)}$ are the moments of the spectral measure of $H_\Gamma$ corresponding to $\delta_v$.

Now, introduce the generating functions
\begin{equation}\label{EqPFDef}P_{\Gamma,v}=P_{\Gamma,v}(t)=\sum\limits_{\gamma\in \mathcal P_{\Gamma,v}}\omega_\gamma t^{|\gamma|},\;\;F_{\Gamma,v}=F_{\Gamma,v}(t)=\sum\limits_{\gamma\in\mathcal F_{\Gamma,v}}\omega_\gamma t^{|\gamma|}.\end{equation} In the case when $\Gamma=\Gamma_m$ is a Schreier graph and $m$ is a symmetric probability distribution on the set of generators the generating function $P_{\Gamma,v}$ is the Green's function of the corresponding random walk on $\Gamma$.
The following relations are well known:
\begin{Lm}\label{LmPF} One has $$P_{\Gamma,v}=\frac{1}{1-F_{\Gamma,v}},\;\;F_{\Gamma,v}=1-\frac{1}{P_{\Gamma,v}}.$$
\end{Lm}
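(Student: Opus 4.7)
The plan is to prove the first identity by a bijective decomposition of loops at $v$ into consecutive first-return excursions, then derive the second identity by trivial algebra.

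First, I would establish the decomposition. Given any path $\gamma \in \mathcal{P}_{\Gamma,v}$ of positive length, let $n_1 \leqslant |\gamma|$ be the smallest index such that the vertex visited at step $n_1$ is $v$ again; this first-return time is well-defined because the path ends at $v$. The prefix of $\gamma$ up to step $n_1$ belongs to $\mathcal{F}_{\Gamma,v}$, and the remainder is again a path in $\mathcal{P}_{\Gamma,v}$ (possibly trivial). Iterating, one obtains a unique finite ordered decomposition $\gamma = \gamma_1 \gamma_2 \cdots \gamma_k$ with each $\gamma_j \in \mathcal{F}_{\Gamma,v}$. Conversely, every such ordered concatenation produces a path in $\mathcal{P}_{\Gamma,v}$, so the map
\[
\bigsqcup_{k=0}^{\infty} \mathcal{F}_{\Gamma,v}^{\,k} \;\longrightarrow\; \mathcal{P}_{\Gamma,v}, \qquad (\gamma_1,\ldots,\gamma_k)\mapsto \gamma_1\cdots\gamma_k
\]
is a bijection (with $k=0$ corresponding to the trivial loop at $v$).

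Second, I would transfer this bijection to the generating functions. Since $\omega_{\gamma_1\cdots\gamma_k}=\omega_{\gamma_1}\cdots\omega_{\gamma_k}$ and $|\gamma_1\cdots\gamma_k|=|\gamma_1|+\cdots+|\gamma_k|$, summing over all decompositions yields
\[
P_{\Gamma,v}(t) \;=\; \sum_{k=0}^{\infty} \Bigl(\sum_{\gamma'\in\mathcal{F}_{\Gamma,v}}\omega_{\gamma'} t^{|\gamma'|}\Bigr)^{\!k} \;=\; \sum_{k=0}^{\infty} F_{\Gamma,v}(t)^k \;=\; \frac{1}{1-F_{\Gamma,v}(t)},
\]
which is the first identity. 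The second identity then follows by solving for $F_{\Gamma,v}$.

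Finally, I would make a brief remark on the meaning of these equalities. As an identity of formal power series in $t$ the computation is unambiguous, since $F_{\Gamma,v}$ has no constant term (first-return paths have length $\geqslant 1$), so $(1-F_{\Gamma,v})^{-1}$ makes sense in $\mathbb C[[t]]$. For analytic convergence it suffices to note that both series have positive radius of convergence under the uniform bound $|\omega_e|\leqslant D$ and the bounded-degree hypothesis, since the number of paths of length $n$ grows at most exponentially; so for sufficiently small $|t|$ all manipulations are valid in the usual analytic sense. I do not anticipate any essential obstacle here — the only thing to check carefully is that the first-return decomposition is both well-defined and uniquely invertible, which is immediate from the definition of $\mathcal{F}_{\Gamma,v}$.
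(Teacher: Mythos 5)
Your proof is correct. The paper gives no proof of this lemma (it simply states that the relations are well known), and your first-return decomposition together with the geometric-series summation is exactly the classical argument behind this fact; your remarks on the formal-power-series interpretation and on absolute convergence for small $|t|$ cover the only points that need care.
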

\begin{Rem}\label{RemPFConv} In the case of a graph of a bounded degree with uniformly bounded weight for any $t$ such that $|t|<\|H_\Gamma\|^{-1}$ the series $P_{\Gamma,v}(t)$ and $F_{\Gamma,v}(t)$ converge and moreover
$$P_{\Gamma,v}(t)=((I-tH_\Gamma)^{-1}\delta_v,\delta_v).$$
\end{Rem}
 Further, for a graph $\Gamma$ (possibly, weighted) and a vertex $v$ of $\Gamma$ denote by $\Gamma_v$ the corresponding rooted graph. For a rooted graph $\Gamma_v$ denote by $\delta+\Gamma_v$ the rooted graph obtained by adding a new vertex $\delta$ (serving as a new root) to $\Gamma_v$ connected by a single edge to $v$. In case of a weighted graph we keep the weights on the edges of $\Gamma$ and equip
 the new edge joining $\delta$ and $w$ with new weights $\omega_{(\delta,w)},\omega_{(w,\delta)}\in\mathbb C$. Thus, for a weighted graph $\Gamma$ the result of the operation $"\delta+"$ depends on two complex numbers $\omega_{(\delta,w)}$ and $\omega_{(w,\delta)}$. We do not incorporate these numbers in the notation of the operation to simplify the notations.

 For two graphs $\Gamma$ and $\Delta$ (possibly, weighted) and two vertices $v\in \Gamma,w\in \Delta$ denote by $\Gamma_v\cup\Delta_w$ the rooted graph obtained by taking disjoint copies of $\Gamma$ and $\Delta$ and identifying $v$ and $w$. If $\Gamma$ and $\Delta$ are weighted graphs the graph $\Gamma_v\cup\Delta_w$ is a weighted graph with the weights inherited from $\Gamma$ and $\Delta$. Note that the  operation $"\cup"$ is commutative and associative.

 To describe the Schreier graphs of Thompson group it is convenient to introduce an additional operation $\star$:
\begin{equation}\label{EqStar}
\Gamma_v\star\Delta_w=\Gamma_v\cup(\delta+\Delta_w).
\end{equation} It is not hard to see on simple examples that the operation $"\star"$ is neither commutative nor associative.
Note that, by construction, the results of operations $"\delta +",\;"\cup"$ and $"\star"$ are rooted graphs (weighted if the graphs involved in the operations are weighted). These operations are illustrated on Figure \ref{FigGraphOperations}.
\begin{figure}[h]\centering\includegraphics[width=0.8\linewidth]{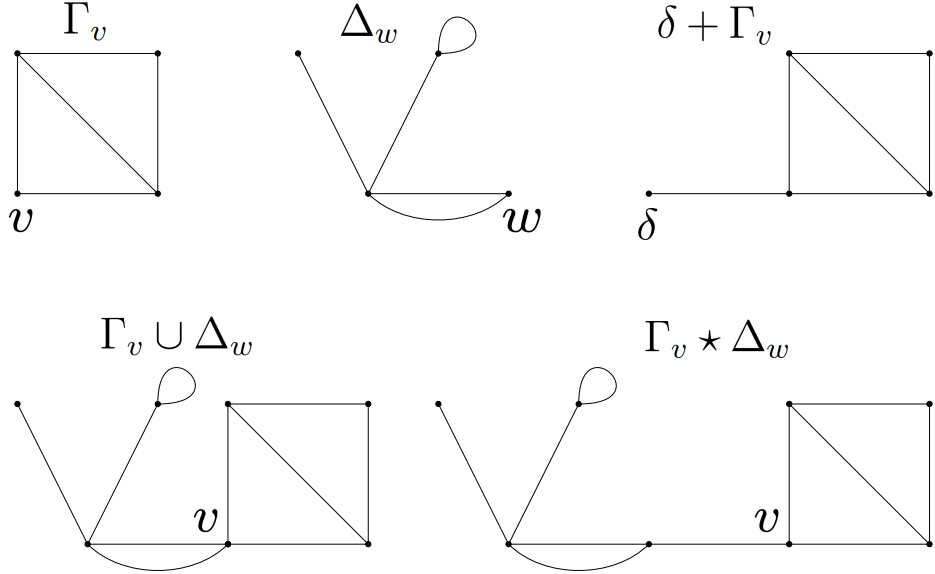}
\caption{Illustration to operations on graphs.}\label{FigGraphOperations} \end{figure}

\begin{Lm}\label{LmGraphSum}
One has
$$F_{\Gamma_v\cup\Delta_u,v}=F_{\Gamma,v}+F_{\Delta,u},\;\;F_{\delta+\Gamma_v,\delta}=
\omega_{(\delta,v)}\omega_{(v,\delta)}t^2P_{\Gamma,v},
\;\;F_{\Gamma_v\star\Delta_u,v}=F_{\Gamma,v}+\omega_{(v,u)}\omega_{(u,v)}t^2P_{\Delta,u}$$
\end{Lm}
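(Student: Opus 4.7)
The plan is to prove each of the three identities by classifying the first-return paths at the distinguished root according to which edges they use first, and then reading off the generating-function identity from the combinatorial decomposition. All three identities have the same flavor: a first-return path is forbidden to visit the root in its interior, so any move into a subgraph attached at the root must be followed by a full return to the root before the path can terminate.

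For the first identity, $F_{\Gamma_v\cup\Delta_u,v}=F_{\Gamma,v}+F_{\Delta,u}$, I would observe that in the disjoint union with $v$ and $u$ identified, the two copies meet only at the root. Hence any first-return path, after its first edge, is trapped inside whichever of the two subgraphs that edge enters, because the only way out of that subgraph is back through the root, which would terminate the path. Therefore $\mathcal F_{\Gamma_v\cup\Delta_u,v}$ decomposes as the disjoint union of $\mathcal F_{\Gamma,v}$ and $\mathcal F_{\Delta,u}$, and summing weights gives the claim.

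For the second identity, $F_{\delta+\Gamma_v,\delta}=\omega_{(\delta,v)}\omega_{(v,\delta)}t^2P_{\Gamma,v}$, the key point is that the only edge incident to $\delta$ goes to $v$. Thus every first-return path at $\delta$ must begin with the directed edge $(\delta,v)$ of weight $\omega_{(\delta,v)}$, end with the directed edge $(v,\delta)$ of weight $\omega_{(v,\delta)}$, and in between trace out an arbitrary closed walk at $v$ inside $\Gamma$, since any excursion into $\Gamma$ cannot revisit $\delta$ (the only way in is through $v$, which would immediately terminate). This gives a weight- and length-preserving bijection between $\mathcal F_{\delta+\Gamma_v,\delta}^{(n+2)}$ and $\mathcal P_{\Gamma,v}^{(n)}$, yielding the extra factor $\omega_{(\delta,v)}\omega_{(v,\delta)}t^2$ multiplying $P_{\Gamma,v}$.

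Finally, the third identity follows by combining the previous two. By the definition $\Gamma_v\star\Delta_u=\Gamma_v\cup(\delta+\Delta_u)$, where the root of $\delta+\Delta_u$ (the new vertex $\delta$) is identified with $v$. Applying the first identity to the union gives $F_{\Gamma_v\star\Delta_u,v}=F_{\Gamma,v}+F_{\delta+\Delta_u,\delta}$, and then applying the second identity to the second summand yields $F_{\delta+\Delta_u,\delta}=\omega_{(v,u)}\omega_{(u,v)}t^2P_{\Delta,u}$, since after identification the directed edges $(\delta,u)$ and $(u,\delta)$ become $(v,u)$ and $(u,v)$ with the stipulated weights. I do not expect any serious obstacle here: the whole argument rests on the observation that a first-return path, by definition, cannot use the root as an interior vertex, and this rigidly forces the decomposition in each case. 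The only point that requires slight care is keeping track of the directed-edge weights in each direction, since the weight of an edge traversed from $v$ to $u$ need not equal the weight traversed from $u$ to $v$.
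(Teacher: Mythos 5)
Your proposal is correct and follows essentially the same approach as the paper: decompose the first-return paths at the root according to which subgraph they enter, observe that the second identity forces the path to be a closed walk in $\Gamma$ sandwiched between the two edges incident to $\delta$, and deduce the third identity from the first two via the definition of $\star$. No issues.
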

\begin{proof} Since $\Gamma$ and $\Delta$ as subgraphs of $\Gamma_v\cup\Delta_u$ intersect only at one vertex $v$ any non-trivial first return path $\gamma\in\Gamma_v\cup\Delta_u$ at the vertex $v$ either belongs to $\Gamma$ or belongs to $\Delta$, and the two possibilities are mutually exclusive. Thus, $\mathcal F_{\Gamma_v\cup\Delta_u,v}$ can be written as a disjoint union of $\mathcal F_{\Gamma,v}$ and $\mathcal F_{\Delta,u}$. By definition of the first-return series \eqref{EqPFDef} this implies the first formula of Lemma \ref{LmGraphSum}.

To prove the second formula of Lemma \ref{LmGraphSum} observe that any first return path $\gamma$ at the vertex $\delta$ in $\delta+\Gamma_v$ starts from the edge $(\delta,v)$, follows a path $\gamma_1$ in $\Gamma$ (starting and ending at $v$), and finishes by the edge $(v,\delta)$.

The third formula of Lemma \ref{LmGraphSum} follows from \eqref{EqStar} and the combination of the first two.
\end{proof}
\subsection{Weighted Schreier graphs of the action of $F$ on $[0,1]$.}\label{SubsecWeightedSchreier}
In this subsection we give a description of the building blocks of Schreier graphs $\Gamma_x$ of the action of $F$ on $[0,1]$ and obtain a description of $\Upsilon=\Gamma_{1/2}$ (see Figure \ref{FigSchreierThompson}) in terms of the operation $\star$ and simpler graphs which are subgraphs of $\Upsilon$. For all graphs we construct below we assume the property that $\omega_{(v,w)}=\omega_{(w,v)}$ for any edge $e=(v,w)$.
\begin{figure}[b]\centering\includegraphics[width=0.85\linewidth]{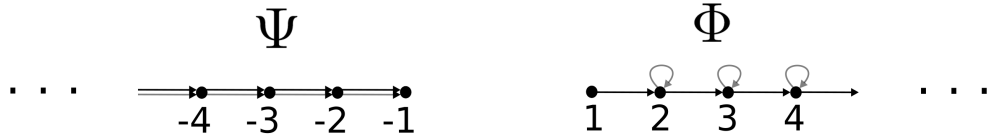}
\caption{Graphs $\Psi$ and $\Phi$. Every black edge has weight $\alpha$ and every grey edge has weight $\beta$.}\label{FigPsiPhi} \end{figure}

Introduce directed graphs $\Phi$ and $\Psi$ with vertex sets labeled by positive and negative integers correspondingly such that:\\
$\bullet$ in $\Phi$ every two consecutive integers are connected by one edge, every integer $i>1$ has a loop attached to it and there are no other edges;\\
$\bullet$ in $\Psi$ every two consecutive integers are connected by two edges and there are no other edges.
\begin{figure}[t]\centering\includegraphics[width=0.7\linewidth]{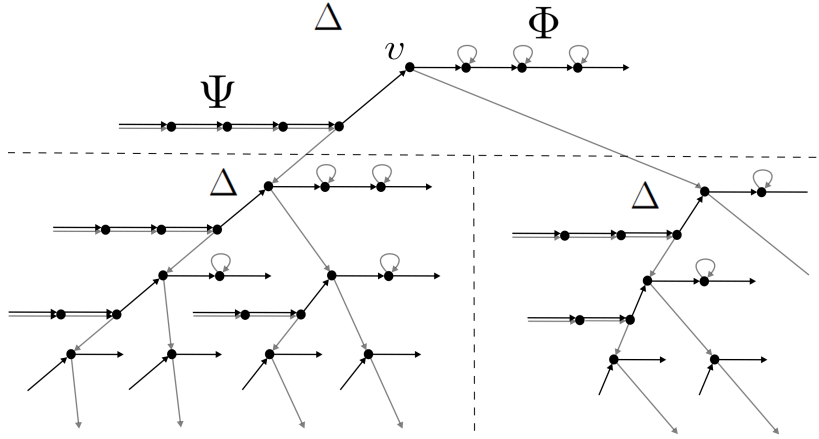}
\caption{Illustration to formula \eqref{EqDv}. Every black edge has weight $\alpha$ and every grey edge has weight $\beta$.}\label{FigSchreierThompsonSelfsim} \end{figure}

Fix two nonzero real numbers $\alpha,\beta$. We make $\Phi$ and $\Psi$ weighted graphs by:\\
$\bullet$ marking all directed "horizontal" (\ie joining consecutive integers) edges in $\Phi$ by $\alpha$ and all directed loops by $\beta$;\\
$\bullet$ for every $i$ marking one edge of $\Psi$ between $i$ and $i+1$ by $\alpha$ (in both directions) and another by $\beta$ (in both directions).

The shapes of graphs $\Phi$ and $\Psi$ are shown at Figure \ref{FigPsiPhi}.
\begin{Lm}\label{LmDeltaEq} There exists a unique connected weighted rooted graph $\Delta_v$ with countably many vertices such that
\begin{equation}\label{EqDv}\Delta_v=(\Phi_1\star\Delta_v)\star(\Psi_{-1}\star\Delta_v)\end{equation} and, in addition, $\omega_{(1,v)}=\omega_{(-1,v)}=\beta$, $\omega_{(-1,1)}=\alpha$.
\end{Lm}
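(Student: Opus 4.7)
The plan is to exhibit $\Delta_v$ explicitly as a labeled countable graph and then verify both the fixed-point relation \eqref{EqDv} and uniqueness. Let $T=\{L,R\}^*$ denote the set of finite binary words. I would take as vertex set $T\times(\mathbb Z\setminus\{0\})$ and designate $(\emptyset,1)$ as the root $v$. On each slice $\{w\}\times\mathbb Z_{\geqslant 1}$ I place an isomorphic copy $\Phi^w$ of $\Phi$ (with its weights $\alpha$ on the horizontal edges and $\beta$ on the loops at positions $i\geqslant 2$), and on each slice $\{w\}\times\mathbb Z_{\leqslant-1}$ an isomorphic copy $\Psi^w$ of $\Psi$. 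Three families of linking edges are added, one for each $w\in T$: a weight-$\alpha$ edge between $(w,1)$ and $(w,-1)$, a weight-$\beta$ edge between $(w,1)$ and $(wL,1)$, and a weight-$\beta$ edge between $(w,-1)$ and $(wR,1)$.

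For existence, the candidate graph is manifestly countable, and any vertex $(w,i)$ is reached from the root by descending the binary tree from $\emptyset$ to $w$ along the linking edges and then moving inside the final $\Phi^w$- or $\Psi^w$-block, so it is connected. To check \eqref{EqDv}, I would observe that via the shift $(Lw',i)\mapsto(w',i)$ the subgraph induced on $\{Lw':w'\in T\}\times(\mathbb Z\setminus\{0\})$ is isomorphic as a rooted weighted graph to $\Delta_v$ itself, and that $\Phi^\emptyset$ together with this shifted copy and the linking $(\emptyset,1)$--$(L,1)$ edge realizes exactly $\Phi_1\star\Delta_v$. The analogous statement with $L$ replaced by $R$, $\Phi$ by $\Psi$, and $1$ by $-1$ realizes $\Psi_{-1}\star\Delta_v$. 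The linking edge between $(\emptyset,1)$ and $(\emptyset,-1)$ of weight $\alpha$ supplies the outer $\star$. The normalizations $\omega_{(1,v)}=\omega_{(-1,v)}=\beta$ and $\omega_{(-1,1)}=\alpha$ correspond precisely to the three linking edges incident to the root, so the hypotheses of the lemma are met.

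For uniqueness, I would fix any connected countable candidate $\Delta'_v$ satisfying the hypotheses and prove by induction on $n$ that its rooted weighted ball of radius $n$ around the root is isomorphic to that of the explicit $\Delta_v$ constructed above. Unfolding \eqref{EqDv} once at the root of $\Delta'_v$ exhibits at the outermost level complete copies of $\Phi$ and $\Psi$, the two linking edges incident to the root (whose weights are fixed by $\omega_{(1,v)}=\beta$ and $\omega_{(-1,1)}=\alpha$), and an additional linking edge of weight $\beta$ from $-1$ into the second inner copy of $\Delta'_v$; at distance at least two from the outer root inside each of the two inner copies, the inductive hypothesis applies and matches the structure in the explicit $\Delta_v$ on the corresponding smaller balls. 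Letting $n\to\infty$ and using connectedness then yields an isomorphism $\Delta'_v\cong\Delta_v$ of rooted weighted graphs.

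The main obstacle I anticipate is bookkeeping: one has to rigorously identify which vertices and edges of the iterated expression $(\Phi_1\star\Delta_v)\star(\Psi_{-1}\star\Delta_v)$ correspond to which $(w,i)$ in the explicit model, and in particular verify that the $\delta$-vertex introduced by each application of $\delta+$ in \eqref{EqStar} is correctly identified with the intended root of the left operand and does not produce unintended identifications. Tracking how the operations $\cup$ and $\delta+$ act at each recursive level, and confirming that distinct copies of $\Phi$ and $\Psi$ arising at different depths meet only through the prescribed linking edges, is where careful accounting is needed; once this is done, both existence and uniqueness reduce to routine verifications.
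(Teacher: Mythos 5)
Your proof is correct and follows essentially the same route as the paper: both unfold the self-similar equation \eqref{EqDv} along the infinite binary tree, your explicit model on $\{L,R\}^*\times(\mathbb Z\setminus\{0\})$ being a closed-form parametrization of the paper's increasing union $\Gamma_\infty=\bigcup_n\Gamma_n$ (your $wL,wR$ are the paper's leaves $w_1,w_2$), with the same shift-isomorphism check of \eqref{EqDv} and a ball-by-ball induction for uniqueness that the paper leaves as ``straightforward to verify.''
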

\noindent The graph $\Delta_v$ from \eqref{EqDv} is illustrated at Figure \ref{FigSchreierThompsonSelfsim}.
\begin{proof} Using the equation \eqref{EqDv} one can construct $\Delta_v$ as a union of increasing sequence of weighted rooted graphs $\Gamma_n$ defined inductively. We start with a graph $\Gamma_0$ consisting of a single vertex $v$ which will be the root of $\Delta_v$. On step $n$ we will add some vertices and weighted edges to already constructed graph $\Gamma_n$. Namely, for each leaf $w$ (vertex adjacent to one edge) of $\Gamma_n$ we do the following:
\begin{itemize}
\item{} attach a new copy of $\Phi$ to $w$ so that the vertex $1$ coincide with $w$;
\item{} connect $w$ by an edge with weight $\beta$ to a new vertex $w_1$;
\item{} connect $w$ by an edge with weight $\alpha$ to a vertex $-1$ of a new copy of $\Psi$;
\item{} connect $-1$ by an edge with weight $\beta$ to a new vertex $w_2$.
\end{itemize} Denote the obtained graph by $\Gamma_{n+1}$. Note that for each leaf $w$ of $\Gamma_n$ the vertices $w_1,w_2$ are leaves of $\Gamma_{n+1}$.

It is straightforward to verify that $\Gamma_\infty=\bigcup\limits_{n=1}^\infty\Gamma_n$ satisfies \eqref{EqDv} and any connected weighted rooted graph $\Delta_v$ with countably many vertices satisfying \eqref{EqDv} is isomorphic to $\Gamma_\infty$.
\end{proof}

As before let $a,b$ be the generators of the Thompson group $F$ and $\alpha,\beta\in\mathbb R\setminus\{0\}$. Let $m=m_{\alpha,\beta}=\alpha(a+a^{-1})+\beta(b+b^{-1})\in\mathbb R[F]$. Recall that one of the main objects of investigation of this article is the graph $\Upsilon=\Gamma_{1/2,m},$ which is the weighted Schreier graph of the action of $F$ on $[0,1]$ at point $\frac{1}{2}$ with the weight defined by $m_{\alpha,\beta}$ (see Figure \ref{FigSchreierThompson}).  Using the description of $\Upsilon$ given in \cite{Savchuk-ShcreierThompson-10} (Proposition 1), and \cite{Savchuk-SchreierThompson-15} (Proposition 2.1),  we obtain:
\begin{Co}\label{PropGa1/2} Let $\Delta_v$ be the graph from Lemma \ref{LmDeltaEq}. Then \begin{equation}\Upsilon=\tilde\Phi_1\star(\Psi_{-1}\star\Delta_v)\end{equation} with $\omega_{(-1,v)}=\beta$ and $\omega_{(-1,1)}=\alpha$, where $\tilde\Phi_1$ is obtained from $\Phi_1$ by adding a loop at the vertex $1$ with the weight $\beta$.
\end{Co}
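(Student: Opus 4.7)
The plan is to verify the claimed decomposition by comparing the combinatorial structure of the Schreier graph $\Upsilon$ (as described by Savchuk in Proposition 1 of \cite{Savchuk-ShcreierThompson-10} and Proposition 2.1 of \cite{Savchuk-SchreierThompson-15}) with the weighted rooted graph obtained by applying the operations $\star$ and $\cup$ to the building blocks $\tilde\Phi_1$, $\Psi_{-1}$, and $\Delta_v$. Since the underlying unweighted shape of $\Upsilon$ is already supplied by Savchuk, the task reduces to two verifications: (i) a combinatorial match between the abstract graph $\tilde\Phi_1\star(\Psi_{-1}\star\Delta_v)$ and the vertex/edge structure of the orbit of $1/2$ under $F$, and (ii) a weight match consistent with the element $m=\alpha(a+a^{-1})+\beta(b+b^{-1})$.

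For the combinatorial match, I would first identify the root: the vertex $1$ of $\tilde\Phi_1$ is to be identified with the point $\tfrac12\in\Upsilon$. From Savchuk's description, moving from $\tfrac12$ under the powers of $a$ produces an infinite ray of vertices with a single $b$-loop attached to each but one of them; this is exactly the graph $\tilde\Phi_1$ (the extra loop at the vertex $1$ reflects the fact that $b$ fixes $\tfrac12$, which is visible from the formula for $b$ in \eqref{EqAB}). Under the operation $\star$, at the root $\tfrac12$ one also attaches an extra subgraph coming from applying $b$ (and its powers combined with $a$): the first hop enters a "doubled" structure $\Psi_{-1}$, corresponding to vertices where both $a$ and $b$ give genuine (non-loop) transitions, and each vertex of $\Psi$ then sprouts a copy of the self-similar graph $\Delta_v$. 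The self-similarity equation \eqref{EqDv} for $\Delta_v$ mirrors precisely the branching rule in Savchuk's description: at each branching point the Schreier graph replicates a $\Phi$-tail and a $\Psi$-strand, each capped again by a copy of the same object.

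For the weight match, I would use formulas \eqref{EqAB} to check, vertex by vertex, that the directed edges produced by $a,a^{-1}$ carry weight $\alpha$ and those produced by $b,b^{-1}$ carry weight $\beta$. The specific marked weights in the statement, $\omega_{(-1,v)}=\beta$ and $\omega_{(-1,1)}=\alpha$, encode which generator ($b$ or $a$ respectively) realizes the corresponding edge between the copies glued by $\star$. Because the operations $\star$ and $\cup$ preserve all internal weights while leaving the gluing edges to be specified, these two numbers (together with the analogous data inside the defining equation \eqref{EqDv} of $\Delta_v$) determine the full weighting.

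The main obstacle is bookkeeping rather than mathematics: to avoid ambiguity one must carefully match the labels $1,-1$ inside $\Phi$, $\Psi$, and $\Delta_v$ to actual dyadic rationals in $(0,1)$ along the orbit, and verify that the self-similar blowup defined inductively in the proof of Lemma \ref{LmDeltaEq} reproduces exactly the infinite tree-like appendage that Savchuk attached to the central strand of $\Upsilon$. Once the abstract graph in Lemma \ref{LmDeltaEq} is shown to be isomorphic (as a weighted rooted graph) to this appendage, applying the two outer operations $\tilde\Phi_1 \star (\Psi_{-1} \star \,\cdot\,)$ gives the full graph $\Upsilon$ and concludes the proof.
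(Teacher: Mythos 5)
Your proposal matches the paper's approach: the paper offers no proof beyond citing Savchuk's description of $\Upsilon$ (Proposition 1 of the 2010 paper and Proposition 2.1 of the 2015 paper) and reading off the decomposition, which is precisely the verification you outline. One small slip: the first hop from the root $\tfrac12$ into $\Psi_{-1}$ is an $a$-edge (weight $\omega_{(-1,1)}=\alpha$), not a $b$-edge, since $b$ fixes $\tfrac12$ and accounts for the loop --- but you state the correct weights later, so this does not affect the argument.
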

\subsection{Computing the generating functions.}\label{SubsecCompGen}
To simplify notations set \begin{equation*}\label{EqqpDef} q(t)=F_{\Phi,1}-1,\;\;p(t)=F_{\Psi,-1}(t)-1,\;\;x(t)=P_{\Delta,v}(t)\;\;(\text{see}\;\;\eqref{EqPFDef}).\end{equation*}
We will drop the variable $t$ where it does not create confusion. For instance, we will write $p,q,x$ instead of $p(t),q(t),x(t)$.
Note that the functions $p(t),q(t)$ are different from the functions $P_*(z), Q_*(z)$ defined in \eqref{EqPQFormulas0} (see also the formula \eqref{EqPQ*}). In particular, the variables $z$ and $t$ are related by $z=t^{-1}$.

By Remark \ref{RemPFConv}, $q(t),p(t)$ and $x(t)$ are convergent for $t$ near the origin, and thus they are elements of the ring $\mathbb C\{t\}$ of germs of complex analytic functions near $0\in\mathbb C$.
\begin{Lm}\label{LmqpForm} In the space $\mathbb C\{t\}$ of germs at the origin one has:
\begin{equation}\label{EqqpForm} q=-\tfrac{1}{2}-\beta t-\tfrac{1}{2}\sqrt{(2\beta t-1)^2-4\alpha^2 t^2},\;\;p=-\tfrac{1}{2}-\tfrac{1}{2}\sqrt{1-4(\alpha+\beta)^2t^2},
\end{equation} where the branch of the radical is chosen so that $\sqrt{1}=1$.
\end{Lm}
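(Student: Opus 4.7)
The strategy is to exploit the self-similarity of $\Phi$ and $\Psi$: deleting the root yields a weighted rooted graph isomorphic to the original, up to the addition or removal of a loop. Combined with the identity $P = 1/(1-F)$ of Lemma \ref{LmPF}, this converts each computation into a quadratic algebraic equation in $\mathbb{C}\{t\}$, which I would then solve explicitly, choosing the branch of the square root so that $F(0) = 0$.

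For $p$: the deletion $\Psi\setminus\{-1\}$ rooted at $-2$ is isomorphic as a weighted rooted graph to $\Psi$ rooted at $-1$. Any non-trivial first-return path at $-1$ traverses one of the two directed edges from $-1$ to $-2$ (contributing a factor $\alpha+\beta$), then an arbitrary closed path at $-2$ in $\Psi\setminus\{-1\}$, and finally one of the two directed edges back (another factor $\alpha+\beta$). Hence
\[
F_{\Psi,-1} \;=\; (\alpha+\beta)^2 t^2\, P_{\Psi,-1} \;=\; \frac{(\alpha+\beta)^2 t^2}{1-F_{\Psi,-1}},
\]
a quadratic whose root with value $0$ at $t=0$ yields the formula for $p = F_{\Psi,-1}-1$.

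For $q$: vertex $1$ carries no loop while every vertex $i>1$ does, so $\Phi\setminus\{1\}$ is not itself isomorphic to $\Phi$. Let $\tilde\Phi$ denote $\Phi$ with a $\beta$-weighted loop added at vertex $1$; then $\Phi\setminus\{1\}$ rooted at $2$ is isomorphic to $\tilde\Phi$ rooted at $1$. Classifying first-return paths at $1$ according to whether they use the loop at $1$ or not (and recalling from Definition \ref{DefMarked} that every loop consists of two directed edges, each of weight $\beta$, so a single loop step contributes $2\beta t$), I would record the pair of relations
\[
F_{\Phi,1} \;=\; \alpha^2 t^2\, P_{\tilde\Phi,1}, \qquad F_{\tilde\Phi,1} \;=\; 2\beta t + \alpha^2 t^2\, P_{\tilde\Phi,1}.
\]
Setting $\tilde u := F_{\tilde\Phi,1}$ and applying $P_{\tilde\Phi,1} = 1/(1-\tilde u)$ turns the second relation into the quadratic $\tilde u^2 - (1+2\beta t)\tilde u + 2\beta t + \alpha^2 t^2 = 0$, whose discriminant simplifies to $(2\beta t - 1)^2 - 4\alpha^2 t^2$. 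I would select the root with $\tilde u(0)=0$, substitute into $F_{\Phi,1} = \alpha^2 t^2/(1-\tilde u)$, and rationalize the denominator to recover $F_{\Phi,1}$; subtracting $1$ then gives the claimed form of $q$.

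The most delicate point is the correct bookkeeping of loops: if a loop were treated as a single directed edge of weight $\beta$ rather than two of combined effective weight $2\beta$, the term $2\beta t$ above would become $\beta t$ and the discriminant would read $(\beta t - 1)^2 - 4\alpha^2 t^2$, an off-by-a-factor-two error. Beyond this, the reasoning is routine; since the expressions under the radicals equal $1$ at $t=0$, the convention $\sqrt{1}=1$ picks out the unique germs in $\mathbb{C}\{t\}$ satisfying $F_{\Phi,1}(0) = F_{\Psi,-1}(0) = 0$, which by the quadratic relations characterize them uniquely.
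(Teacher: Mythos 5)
Your proposal is correct and follows essentially the same route as the paper: both exploit the self-similarity $\tilde\Phi_1\cong(\Phi\setminus\{1\})_2$ and $\Psi_{-1}\cong(\Psi\setminus\{-1\})_{-2}$, record the relations $F_{\tilde\Phi,1}=2\beta t+F_{\Phi,1}=2\beta t+\alpha^2t^2P_{\tilde\Phi,1}$ and $F_{\Psi,-1}=(\alpha+\beta)^2t^2P_{\Psi,-1}$, and solve the resulting quadratics with the branch fixed by $F(0)=0$. The only cosmetic difference is that you solve for $F_{\tilde\Phi,1}$ and then pass to $F_{\Phi,1}$, whereas the paper eliminates $F_{\tilde\Phi,1}$ first and solves directly for $F_{\Phi,1}$; your remark on the loop contributing $2\beta t$ matches the paper's convention exactly.
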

\begin{proof} Observe that $\Phi_1$ is isomorphic as rooted graph to $\delta+\tilde\Phi_1$, where $\delta$ is the graph consisting of one vertex without edges. Recall that $\tilde\Phi_1$ is obtained from $\Phi_1$ by adding a loop at $1$ with weight $\beta$. The weight of the edge connecting $\tilde\Phi$ to $\delta$ is equal to $\alpha$. In the graph $\tilde\Phi$ any $1$-first-return path is either a $1$-first-return path of $\Phi$ or is equal to the loop at $1$ in one of the two possible directions. It follows that \begin{equation}\label{EqTilPhi}F_{\tilde\Phi,1}=2\beta t+F_{\Phi,1}.\end{equation} Using Lemmas \ref{LmGraphSum} and \ref{LmPF} we obtain:
$$F_{\Phi,1}=\alpha^2t^2P_{\tilde\Phi,1}=\frac{\alpha^2t^2}{1-F_{\tilde\Phi,1}}=\frac{\alpha^2 t^2}{1-2\beta t-F_{\Phi,1}} $$ from which taking into account that $F_{\Phi,1}(0)=0$ the formula for $q=F_{\Phi,1}-1$ follows.

To prove the second formula note that the weighted graph $\Psi$ is isomorphic to the graph obtained by connecting the vertex $-1$ of a copy of $\Psi$ by two edges with weights $\alpha$ and $\beta$ to the one-vertex graph $\delta$. It follows that $$F_{\Psi,-1}=(\alpha+\beta)^2t^2P_{\Psi,-1}=\frac{(\alpha+\beta)^2t^2}{1-F_{\Psi,-1}}.$$ Solving the latter equation for $F_{\Psi,-1}$ we prove the formula for $p=F_{\Psi,-1}-1$.
\end{proof}

Observe that $x(0)=1$ and $p(0)=-1$ and thus the formulas $$\frac{1}{x(t)}\;\;\text{and}\;\;\frac{\alpha^2t^2}{p(t)+\beta^2t^2x(t)}$$
 define germs of analytic functions near the origin (\ie elements of $\mathbb C\{t\}$).

\begin{Prop}\label{PropCubic} In the ring $\mathbb C\{t\}$ one has:
\begin{equation}\label{EqMain}q+\frac{1}{x}+\beta^2t^2x-\frac{\alpha^2 t^2}{p+\beta^2 t^2x}=0.
\end{equation} Equivalently,
\begin{equation}\label{EqCubic}\beta^4 t^4x^3+(p+q)\beta^2t^2x^2+(pq+(\beta^2-\alpha^2)t^2)x+p=0.
\end{equation}
\end{Prop}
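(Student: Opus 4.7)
The plan is to first derive \eqref{EqMain} from the self-similar decomposition \eqref{EqDv} of $\Delta_v$ by iterated application of the formulas of Lemma \ref{LmGraphSum}, and then obtain \eqref{EqCubic} by clearing denominators. Since $x(0)=P_{\Delta,v}(0)=1$ and $p(0)+\beta^2\cdot 0\cdot x(0)=-1$, the factor $x(p+\beta^2 t^2 x)$ is a unit in $\mathbb{C}\{t\}$, so multiplying \eqref{EqMain} by it to produce \eqref{EqCubic} is reversible and loses no information.

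To get \eqref{EqMain}, I would write the decomposition \eqref{EqDv} as $\Delta_v=A\star B$, where $A=\Phi_1\star\Delta_v$ has root at vertex $1$ of $\Phi$ and $B=\Psi_{-1}\star\Delta_v$ has root at vertex $-1$ of $\Psi$. Applying the third formula of Lemma \ref{LmGraphSum} to each of $A$ and $B$ with the prescribed connecting-edge weights $\omega_{(1,v)}=\omega_{(-1,v)}=\beta$ gives
$$F_{A,1}=F_{\Phi,1}+\beta^2 t^2 P_{\Delta,v},\qquad F_{B,-1}=F_{\Psi,-1}+\beta^2 t^2 P_{\Delta,v}.$$
Applying the same formula to the outer $\star$ with edge weight $\omega_{(-1,1)}=\alpha$, and using Lemma \ref{LmPF} to rewrite $P_{B,-1}=(1-F_{B,-1})^{-1}$, I obtain
$$F_{\Delta,v}=F_{\Phi,1}+\beta^2 t^2 P_{\Delta,v}+\frac{\alpha^2 t^2}{1-F_{\Psi,-1}-\beta^2 t^2 P_{\Delta,v}}.$$

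To finish, substitute the shorthand: use Lemma \ref{LmPF} once more to replace $F_{\Delta,v}$ by $1-1/x$, and use $F_{\Phi,1}=1+q$, $F_{\Psi,-1}=1+p$, $P_{\Delta,v}=x$. After the $1$'s cancel on both sides and the sign in the identity $1-(1+p)-\beta^2 t^2 x=-(p+\beta^2 t^2 x)$ is pulled out of the denominator, the relation becomes exactly \eqref{EqMain}. Multiplying through by $x(p+\beta^2 t^2 x)$ and collecting by powers of $x$ then produces the cubic \eqref{EqCubic}.

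The only real subtlety is the combinatorial bookkeeping: one must correctly identify which vertex plays the role of the root after each application of $\star$, and attach the specified edge weights (the two $\beta$'s internal to each inner $\star$, and the $\alpha$ on the outer $\star$) in the right places, as prescribed in Lemma \ref{LmDeltaEq}. Once that is set up, the derivation is a routine composition of Lemmas \ref{LmPF} and \ref{LmGraphSum} with elementary algebra.
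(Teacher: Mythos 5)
Your proposal is correct and follows essentially the same route as the paper: apply the third formula of Lemma \ref{LmGraphSum} to the two inner $\star$'s and the outer $\star$ in the decomposition \eqref{EqDv}, convert via Lemma \ref{LmPF}, and clear denominators. The only addition is your explicit check that $x(p+\beta^2 t^2 x)$ is a unit in $\mathbb C\{t\}$, which the paper leaves implicit (it records only that $x(0)=1$ and $p(0)=-1$ just before the proposition), so this is a welcome but minor refinement.
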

\begin{proof} Using Lemmas \ref{LmGraphSum} and \ref{LmDeltaEq} we obtain:
\begin{align*}F_{\Delta,v}=F_{\Phi_1\star\Delta_v,1}+\alpha^2t^2P_{\Psi_{-1}\star\Delta_v,-1}=
F_{\Phi,1}+\beta^2t^2P_{\Delta,v}+\frac{\alpha^2 t^2}{1-F_{\Psi_{-1}\star\Delta_v,-1}},\\
F_{\Psi_{-1}\star\Delta_v,-1}=F_{\Psi,-1}+\beta^2t^2P_{\Delta,v}.
\end{align*} Setting $x=P_{\Delta,v}$ using Lemma \ref{LmPF} we arrive at:
$$\frac{x-1}{x}=q+1+\beta^2t^2x-\frac{\alpha^2 t^2}{p+\beta^2 t^2x},$$ which is equivalent to the equation \eqref{EqMain}.
\end{proof}

\begin{Co}\label{CoGenGa} The following holds:
$$F_{\Upsilon,1/2}=1+2\beta t-\beta^2t^2x-\tfrac{1}{x}.$$
\end{Co}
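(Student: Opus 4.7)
The plan is to unwind the structural identity $\Upsilon=\tilde\Phi_1\star(\Psi_{-1}\star\Delta_v)$ from Corollary \ref{PropGa1/2} using the formulas for first-return series under the operation $\star$ from Lemma \ref{LmGraphSum}, and then use the cubic relation \eqref{EqMain} of Proposition \ref{PropCubic} to eliminate the ``awkward'' rational term and arrive at the stated closed form.

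More concretely, I would proceed in the following steps. First, apply the third formula of Lemma \ref{LmGraphSum} to the outer $\star$:
\[
F_{\Upsilon,1/2}=F_{\tilde\Phi,1}+\alpha^2t^2\,P_{\Psi_{-1}\star\Delta_v,-1},
\]
using that the edge connecting $1$ to $-1$ in the outer $\star$ carries weight $\alpha$ in both directions. Next, use \eqref{EqTilPhi} to write $F_{\tilde\Phi,1}=2\beta t+F_{\Phi,1}=2\beta t+q+1$. For the other term, apply the third formula of Lemma \ref{LmGraphSum} again to the inner $\star$, together with the weights $\omega_{(-1,v)}=\omega_{(v,-1)}=\beta$:
\[
F_{\Psi_{-1}\star\Delta_v,-1}=F_{\Psi,-1}+\beta^2t^2\,P_{\Delta,v}=(p+1)+\beta^2t^2x,
\]
and then invoke Lemma \ref{LmPF} to turn this into $P_{\Psi_{-1}\star\Delta_v,-1}=1/(1-F_{\Psi_{-1}\star\Delta_v,-1})=-1/(p+\beta^2t^2x)$. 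Assembling the pieces yields
\[
F_{\Upsilon,1/2}=1+2\beta t+q-\frac{\alpha^2t^2}{p+\beta^2t^2x}.
\]

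At this point the key trick is that the combination $q-\alpha^2t^2/(p+\beta^2t^2x)$ is exactly what appears in \eqref{EqMain}: that identity reads
\[
q-\frac{\alpha^2t^2}{p+\beta^2t^2x}=-\frac{1}{x}-\beta^2t^2x.
\]
Substituting into the previous display immediately produces
\[
F_{\Upsilon,1/2}=1+2\beta t-\beta^2t^2x-\frac{1}{x},
\]
which is the claim. Throughout we work in the ring $\mathbb C\{t\}$, so all the rational manipulations are legitimate germs near $0$ since $x(0)=1$ and $p(0)=-1$ keep the relevant denominators invertible.

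There is really no serious obstacle here: the statement is a direct bookkeeping consequence of the decomposition of $\Upsilon$ from Corollary \ref{PropGa1/2}, the general graph-operation identities of Lemma \ref{LmGraphSum} and Lemma \ref{LmPF}, and the self-similar equation \eqref{EqMain} already established in Proposition \ref{PropCubic}. The only place where one must be careful is matching the weight conventions on the two edges involved in the two applications of $\star$ (weight $\alpha$ for the edge between the copy of $\tilde\Phi$ and the copy of $\Psi$, and weight $\beta$ for the edges attaching the copies of $\Delta_v$), which is exactly how the factors $\alpha^2t^2$ and $\beta^2t^2$ arise.
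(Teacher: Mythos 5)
Your proof is correct and follows essentially the same route as the paper: decompose $\Upsilon$ via Corollary \ref{PropGa1/2}, apply Lemmas \ref{LmGraphSum} and \ref{LmPF} together with \eqref{EqTilPhi} to get $F_{\Upsilon,1/2}=1+2\beta t+q-\alpha^2t^2/(p+\beta^2t^2x)$, and then use \eqref{EqMain} to eliminate the rational term. The paper's own proof is just a terser version of exactly this computation.
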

\begin{proof} By Corollary \ref{PropGa1/2} and Lemmas \ref{LmPF} and \ref{LmGraphSum} one has:
\begin{align*}F_{\Upsilon,1/2}=F_{\tilde\Phi,1}+\frac{\alpha^2t^2}{1-F_{\Psi_{-1}\star\Delta_v,-1}},\\
F_{\Psi_{-1}\star\Delta_v,-1}=F_{\Psi,-1}+\beta^2t^2P_{\Delta,v}.\end{align*}
Combining these equations using \eqref{EqTilPhi} and \eqref{EqMain} we obtain the desired.
\end{proof}
\section{Proofs of Theorem \ref{ThMain} and Proposition \ref{Co-11}.}\label{SectionMain}
\subsection{Cauchy-Stieltjes transform.}\label{SubsecStiltjes}
Let $\mu$ be a finite measure on a segment $I=[s_1,s_2]\subset\mathbb R$. Its Cauchy-Stieltjes transform is defined by
$$S_\mu(z)=\int\limits_I \frac{\dd\mu(y)}{y-z},\;\; z\in\mathbb C\setminus\supp(\mu).$$ It is well-known that the function $S_\mu(z)$ is complex analytic on $\mathbb C\setminus \supp(\mu)$. Since for $y\in\mathbb R$ and $z\in\mathbb C\setminus \mathbb R$ the sign of $\im\tfrac{1}{y-z}$ coincides with the sign of $\im z$ the definition of Cauchy-Stieltjes transform implies that
\begin{equation}\im S_\mu(z)>0\;\;\text{if}\;\;\im z>0,\;\;\im S_\mu(z)<0\;\;\text{if}\;\;\im z<0.\label{EqSmuHalfplanes}
\end{equation}Introduce the moments and the generating function of the measure $\mu$ by:
$$m_k=\int\limits_I y^k\dd\mu(y),\;\;k\geqslant 0,\;\;R_\mu(t)=\sum\limits_{k\geqslant 0} m_kt^k.$$
Observe that \begin{equation}\label{EqStiltGen} S_\mu(z)=-z^{-1}R_\mu(z^{-1})\;\;\text{if}\;\; |z|>\max\{|s_1|,|s_2|\}.
\end{equation}

Recall that $\lambda$ stands for the Lebesgue measure on $\mathbb R$. For a finite measure $\mu$ on $\mathbb R$ denote by $G_\mu$ its distribution function:
\begin{equation}\label{EqDistributionFunction}
G_\mu(s)=\mu((-\infty,s]),\;\;s\in\mathbb R.
\end{equation}
Our computation of spectral measures relies on the following result from \cite{SilversteinChoi-Stieltjes-95}:
\begin{Th}\label{ThStilt}[Silverstein-Choi] Suppose that for some $s\in I$ the limit $S_+(s)=\lim\limits_{z\to s,\im z>0}\im S_\mu(z)$ exists. Then the distribution function $G_\mu$ is differentiable at $s$ and $G_\mu'(s)=\frac{1}{\pi}S_+(s)$.
\end{Th}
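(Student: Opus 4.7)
The plan is to identify $\tfrac{1}{\pi}\im S_\mu$ with the Poisson extension of $\mu$ to the upper half plane and exploit the two-dimensional nature of the limit. For $z = x + \I\epsilon$ with $\epsilon > 0$, a direct computation yields
$$\frac{1}{\pi}\im S_\mu(x + \I\epsilon) \;=\; \int_I \frac{\epsilon/\pi}{(y-x)^2 + \epsilon^2}\,\dd\mu(y) \;=\; (P_\epsilon \ast \mu)(x),$$
where $P_\epsilon(u) = \tfrac{\epsilon/\pi}{u^2 + \epsilon^2}$ is the Poisson kernel. The hypothesis thus becomes the statement that the Poisson extension of $\mu$ admits a full two-dimensional boundary limit $L := \tfrac{1}{\pi}S_+(s)$ at $s$, i.e.\ $(P_\epsilon \ast \mu)(x) \to L$ as $(x, \epsilon) \to (s, 0^+)$.

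As a preliminary, I would rule out an atom at $s$: if $\mu(\{s\}) = c > 0$, then $\im S_\mu(s + \I\epsilon) \geq c/\epsilon$ blows up, contradicting the existence of $S_+(s)$. So $G_\mu$ is continuous at $s$ and it suffices to handle one-sided difference quotients.

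For the main step, observe that the two-dimensional limit is equivalent to uniform convergence on boxes: given $\delta > 0$, there exist $h_0, \epsilon_0 > 0$ with $|(P_\epsilon \ast \mu)(x) - L| < \delta$ whenever $|x - s| < h_0$ and $0 < \epsilon < \epsilon_0$. For $h \in (0, h_0)$, Fubini together with the antiderivative of the Poisson kernel give
$$\int_s^{s+h} (P_\epsilon \ast \mu)(x)\,\dd x \;=\; \int_I \tfrac{1}{\pi}\Bigl[\arctan\tfrac{s+h-y}{\epsilon} - \arctan\tfrac{s-y}{\epsilon}\Bigr]\,\dd\mu(y).$$
The left side lies in $[(L-\delta)h,\,(L+\delta)h]$ for $0 < \epsilon < \epsilon_0$; letting $\epsilon \to 0^+$ and using dominated convergence, the right side tends to $\mu((s, s+h)) + \tfrac{1}{2}\mu(\{s+h\})$ (the atom at $s$ being absent). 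For every $h \in (0, h_0)$ outside a countable exceptional set one then obtains $|G_\mu(s+h) - G_\mu(s) - Lh| \leq \delta h$, and monotonicity of $G_\mu$ extends this bound to every $h \in (0, h_0)$. This proves $G_\mu'(s^+) = L$; the symmetric argument on the left gives $G_\mu'(s^-) = L$, hence differentiability at $s$ with value $\tfrac{1}{\pi}S_+(s)$.

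The main obstacle is the Tauberian character of the problem: mere existence of the radial limit $(P_\epsilon \ast \mu)(s) \to L$ does not by itself imply differentiability of $G_\mu$ at $s$ (Fatou-type arguments only deliver such conclusions almost everywhere). The argument above succeeds precisely because the hypothesis allows $z$ to approach $s$ from every direction in the upper half plane, which is what promotes pointwise convergence to uniform convergence on a small $x$-interval around $s$; it is this uniformity that bridges Poisson boundary behaviour and classical differentiability of the distribution function.
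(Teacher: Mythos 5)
The paper does not prove this statement; it is quoted verbatim from Silverstein--Choi \cite{SilversteinChoi-Stieltjes-95}, so there is no internal argument to compare against. Your proof is correct and is essentially the standard one: the identification $\tfrac{1}{\pi}\im S_\mu(x+\I\epsilon)=(P_\epsilon\ast\mu)(x)$ is right, the exclusion of an atom at $s$ via the $c/\epsilon$ blow-up is right, and the key step --- converting the unrestricted (two-dimensional) limit into uniform smallness of $(P_\epsilon\ast\mu)(x)-L$ on a box, then integrating in $x$ and letting $\epsilon\to 0^+$ --- is exactly what makes the Tauberian conclusion go through where a merely radial limit would not. The two delicate points are handled properly: the dominated-convergence limit of the integrated Poisson kernel correctly produces the half-weight $\tfrac{1}{2}\mu(\{s+h\})$ at the moving endpoint, and the resulting estimate, valid a priori only for $h$ outside the countable set of atoms of $\mu$, extends to all $h\in(0,h_0)$ by squeezing $G_\mu(s+h)$ between $G_\mu(s+h')$ and $G_\mu(s+h'')$ with non-exceptional $h'\uparrow h$, $h''\downarrow h$. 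Together with the symmetric left-sided computation this gives $G_\mu'(s)=\tfrac{1}{\pi}S_+(s)$ as claimed; I see no gap.
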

\noindent
Theorem \ref{ThStilt} implies the following:
\begin{Co}\label{CoSuppMu} Let $s\in\mathbb R$. Then $s\notin\supp(\mu)$ if and only if there exists an open segment $J$ containing $s$ such that $$\lim\limits_{z\to z_0,\im z>0}\im S_\mu(z)=0$$ for every $z_0\in J$.
\end{Co}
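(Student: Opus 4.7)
The plan is to prove the corollary by double implication, using Theorem \ref{ThStilt} in one direction and the analytic/real-valued behavior of $S_\mu$ away from $\supp(\mu)$ in the other.

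For the forward direction, assume $s\notin\supp(\mu)$. Since $\supp(\mu)$ is closed, I pick an open segment $J\ni s$ with $J\cap\supp(\mu)=\emptyset$. For each $z_0\in J$ I take a complex neighborhood $U$ of $z_0$ disjoint from $\supp(\mu)$. On $U$ the integrand $\tfrac{1}{y-z}$ (as a function of $z$, $y\in\supp\mu$) is uniformly bounded and analytic, so $S_\mu$ is complex analytic on $U$. Moreover, for real $z\in U\cap\mathbb{R}$ the integrand is real, hence $S_\mu(z)\in\mathbb{R}$. Continuity of $S_\mu$ on $U$ then gives $\lim_{z\to z_0,\,\im z>0}\im S_\mu(z)=\im S_\mu(z_0)=0$.

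For the reverse direction, assume an open segment $J\ni s$ exists such that the limit vanishes at every $z_0\in J$. By Theorem \ref{ThStilt}, the distribution function $G_\mu$ is differentiable at every $z_0\in J$ with $G_\mu'(z_0)=\tfrac{1}{\pi}\cdot 0=0$. Since $G_\mu$ is monotone non-decreasing, differentiability at every point of $J$ forces continuity on $J$ (in particular $\mu$ has no atoms in $J$: if $\mu(\{z_0\})=c>0$, then $\im S_\mu(z_0+i\epsilon)\sim c/\epsilon\to\infty$, contradicting the hypothesis). Next I invoke the classical mean value inequality for monotone functions: a continuous non-decreasing function on an interval whose derivative exists and equals zero at every point must be constant. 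Hence $G_\mu$ is constant on $J$. Picking any open subinterval $J'=(a',b')\subset J$ containing $s$, I conclude $\mu(J')=G_\mu(b'^-)-G_\mu(a')=0$, so $s\notin\supp(\mu)$.

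The only nontrivial step is the assertion in the reverse direction that pointwise differentiability with vanishing derivative implies constancy of the monotone function $G_\mu$ on $J$. This is standard: if $G_\mu(b)>G_\mu(a)$ for some $a<b$ in $J$, then for $\epsilon>0$ small the function $H(x)=G_\mu(x)-\epsilon(x-a)$ still satisfies $H(b)>H(a)$, so $H$ attains its supremum at some interior point $c\in(a,b]$, at which $G_\mu'(c)\geqslant\epsilon>0$, contradicting $G_\mu'\equiv 0$ on $J$. The rest of the argument is essentially bookkeeping around the Silverstein–Choi theorem, so this measure-theoretic step is the only genuine obstacle.
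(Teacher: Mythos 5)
Your proof is correct and follows essentially the same route as the paper: one direction uses that $S_\mu$ is analytic and real-valued on a real segment disjoint from $\supp(\mu)$, and the other applies Theorem \ref{ThStilt} to conclude $G_\mu$ is constant on $J$. The only difference is that you spell out the real-analysis step (a monotone function differentiable with vanishing derivative at every point of an interval is constant there), which the paper leaves implicit; your justification of that step is sound.
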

\begin{proof} Indeed, if such segment $J$ exists, then by Theorem \ref{ThStilt} $G_\mu$ is constant on $J$, therefore $\mu(J)=0$ and $s\notin\supp(\mu)$.
On the other hand, if $s\notin\supp(\mu)$ then there exists an open segment $J$ containing $s$ which does not intersect $\supp(\mu)$.  By definition of Cauchy-Stieltjes transform $S_\mu(z)$ is well-defined, analytic, and has real values on $J$. It follows that $S_+(z)=0$ on $J$.
\end{proof}
Though the domain of definition of $S_\mu(z)$ is $\mathbb C\setminus \supp(\mu)$ it is possible in some cases to continue analytically $S_\mu$ to neighborhoods of points from $\supp(\mu)$. To study these cases let us introduce an auxiliary
\begin{Def}\label{DefRealPole} We will say that $S_\mu$ has an isolated singularity at a point $z_0\in\supp(\mu)$ if it admits an analytic continuation from $\mathbb H_+=\{z\in\mathbb C:\re(z)>0\}$ to a punctured neighborhood of $z_0$. In other words, there exists an open neighborhood $U\subset \mathbb C$ of $z_0$ and an analytic function $\phi$ on $U\setminus \{z_0\}$ such that $\phi(z)=S_\mu(z)$ for all $z\in U\cap \mathbb H_+$.

If $z_0$ is a pole  of $\phi$ we will say that $S_\mu$ has a pole at $z_0$.

 If $\phi(z)\in\mathbb R$ for all $z\in (U\cap \mathbb R)\setminus \{z_0\}$ we will say that $S_\mu$ has a real isolated singularity at $z_0$.
\end{Def}
\begin{Rem}\label{RemConj} Since $S_\mu$ is real on the real line one has $S_\mu(\bar z)=\overline{(S_\mu(z))}$ for all $z\notin I$. Moreover, \eqref{EqSmuHalfplanes} implies that all zeros of $S_\mu$ on $\mathbb R\setminus \supp(\mu)$ are simple and all real isolated singularities of $S_\mu$ on $\supp(\mu)$ are simple poles.
\end{Rem}

Now, let $\Gamma$ be a weighted graph of bounded degree with  a uniformly bounded weight, let $v$ be a vertex of $\Gamma$, and let $\mu_{\Gamma,v}$ be the correpsonding spectral measure of $H_\Gamma$ (see \eqref{EqHeckeGamma}). Then the $n$th moment $m_n$ of $\mu_{\Gamma,v}$ is equal to $p^{(n)}_{\Gamma,v}$ for every $n\in\mathbb Z_+$ (see \eqref{EqPGammavnDef} and \eqref{EqpgvHG}). Therefore, $R_\mu(t)=P_{\Gamma,v}(t)$. In particular, we have:
\begin{equation}\label{EqSmuPmu} S_{\mu_{\Gamma,v}}(z)=-z^{-1}P_{\Gamma,v}(z^{-1})\;\;\text{if}\;\; |z|>\|H_\Gamma\|.
\end{equation}
\subsection{The spectral measure $\mu_{\Delta,v}$ of $H_\Delta$.}\label{SubsetKestenHDelta}
Formulas \eqref{EqqpForm} define analytic functions $p(t),q(t)$ near zero which admit analytic continuations to the lower half-plane $\mathbb H_-=\{z\in\mathbb C:\im z<0\}$. We then extend these functions by continuity onto the real line. We denote the obtained  functions on $\overline{\mathbb H}_-$ by  $P(t)$ and $Q(t)$ correspondingly.

Recall that $x(t)$ (see Subsection \ref{EqqpDef}) is a convergent power series in $t$ near $0$. From \eqref{EqStiltGen} we deduce that $x$ admits an analytic continuation to $\mathbb H_-$. Denote the obtained function on $\mathbb H_-$ by $X(t)$. Using \eqref{EqCubic} and Cardano formulas we obtain that $X(t)$ extends to a continuous function on $\overline{\mathbb H_-}$. Moreover, \begin{equation}\label{EqCubic1}\beta^4 t^4X^3(t)+(P(t)+Q(t))\beta^2t^2X^2(t)+(P(t)Q(t)+(\beta^2-\alpha^2)t^2)X(t)+P(t)=0
\end{equation} for all $t\in\overline{\mathbb H_-}$.

Further, let \begin{equation}\label{EqPQ*}P_*(z)=-zP(z^{-1}),
\;\;Q_*(z)=-zQ(z^{-1}),\;\;V(z)=-z^{-1}X(z^{-1})\end{equation} for $z\in\overline{\mathbb H}_+\setminus \{0\}$.
\begin{Rem}\label{RemMuDel} By \eqref{EqStiltGen} one has
$$V(z)=S_{\mu_\Delta}(z)\;\;\text{on}\;\;\mathbb H_+,$$ where $\mu_\Delta=\mu_{\Delta,v}$ is the spectral measure of $H_\Delta$ corresponding to the vector $\delta_v$. In particular, $\im V(z)>0$ for $z\in\mathbb H_+$.
\end{Rem}
Observe that for any $z\in\overline{\mathbb H}_+\setminus \{0\}$ one has:
\begin{equation}\label{EqPQFormulas} P_*(z)=\tfrac{z}{2}\pm\tfrac{1}{2}\sqrt{z^2-4(\alpha+\beta)^2},\;\;
Q_*(z)=\tfrac{z}{2}+\beta\pm\tfrac{1}{2}\sqrt{z^2-4\beta z+4(\beta^2-\alpha^2)}.
\end{equation}
It is not hard to see that $P_*$ and $Q_*$ extend by continuity to $0$.

From \eqref{EqCubic1} we obtain that for all $z$ from $\overline{\mathbb H}_+\setminus\{0\}$ the value $V(z)$ is a solution of the following equation:
\begin{equation}\label{EqV}\beta^4v^3+(P_*(z)+Q_*(z))\beta^2v^2+
(P_*(z)Q_*(z)+(\beta^2-\alpha^2))v+P_*(z)=0.\end{equation}
If $\beta$ is nonzero  then using Cardano formulas we obtain that  $V(z)$ extends by continuity to $0$. If $\beta=0$ and $\alpha\neq 0$ then 
$$P_*(z)=Q_*(z)=\tfrac{z}{2}\pm\tfrac{1}{2}\sqrt{z^2-4\alpha^2},\;\;\text{and so}\;\;V(z)=\frac{P_*(z)}{\alpha^2-P_*^2(z)}$$ also extends by continuity to 0.
Using Theorem \ref{ThStilt} we obtain:
\begin{Co}\label{CoKestenHDelta} The spectral measure $\mu_\Delta$ is absolutely continuous with respect to the Lebesgue measure $\lambda$. Moreover,
$$\frac{\dd\mu_\Delta(z)}{\dd \lambda(z)}=\tfrac{1}{\pi}\im V(z)\;\;\text{for}\;\;z\in\mathbb R.$$
\end{Co}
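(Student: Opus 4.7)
The plan is to combine Remark \ref{RemMuDel}, the continuous extension of $V$ to $\overline{\mathbb H}_+$ established in the text preceding the corollary, and Theorem \ref{ThStilt} (Silverstein--Choi), with a short additional step to upgrade pointwise differentiability of the distribution function to absolute continuity of the measure.

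First I would record the key identification: by Remark \ref{RemMuDel}, $V(z) = S_{\mu_\Delta}(z)$ on $\mathbb H_+$, so $V$ literally is the Cauchy--Stieltjes transform of $\mu_\Delta$ on the open upper half-plane. The preceding paragraphs have already shown, via Cardano's formulas applied to the cubic equation \eqref{EqV}, that $V$ admits a continuous extension from $\mathbb H_+$ to all of $\overline{\mathbb H}_+$ (treating the degenerate cases $\beta = 0$ separately, as done in the excerpt). Consequently, for every $s \in \mathbb R$ the boundary limit
\[
S_+(s) = \lim_{z \to s,\, \im z > 0} \im S_{\mu_\Delta}(z) = \im V(s)
\]
exists and depends continuously on $s$.

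Next I would invoke Theorem \ref{ThStilt} directly: the existence of $S_+(s)$ at every $s \in \mathbb R$ gives that the distribution function $G_{\mu_\Delta}$ is differentiable everywhere with $G'_{\mu_\Delta}(s) = \tfrac{1}{\pi} \im V(s)$. Because $V$ extends continuously up to $\mathbb R$, the Cauchy--Stieltjes transform $S_{\mu_\Delta}$ has no poles on $\mathbb R$; by Remark \ref{RemConj} this rules out atoms of $\mu_\Delta$, so $G_{\mu_\Delta}$ is continuous. The derivative $\tfrac{1}{\pi}\im V$ is continuous on $\mathbb R$ and vanishes outside the compact set $[-\|H_\Delta\|, \|H_\Delta\|]$ that contains $\supp(\mu_\Delta)$, hence is bounded and globally integrable. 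The fundamental theorem of calculus for continuously differentiable functions then yields
\[
G_{\mu_\Delta}(b) - G_{\mu_\Delta}(a) = \int_a^b \tfrac{1}{\pi} \im V(s)\, ds \quad \text{for all } a < b,
\]
which is precisely the statement that $\mu_\Delta$ is absolutely continuous with respect to $\lambda$ with density $\tfrac{1}{\pi} \im V$.

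The only point that requires any care is the upgrade from pointwise differentiability (the conclusion of Silverstein--Choi) to genuine absolute continuity; this is the step that relies on continuity of $\im V$ on $\mathbb R$ rather than mere existence of the boundary values. Everything else is essentially bookkeeping, since the cubic equation \eqref{EqV} was set up precisely so that its unique boundary solution with nonnegative imaginary part would deliver a continuous Cauchy--Stieltjes transform on $\overline{\mathbb H}_+$.
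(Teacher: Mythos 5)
Your proposal is correct and follows essentially the same route as the paper, which likewise deduces the corollary from the identification $V = S_{\mu_\Delta}$ on $\mathbb H_+$ (Remark \ref{RemMuDel}), the continuous extension of $V$ to $\overline{\mathbb H}_+$, and Theorem \ref{ThStilt}. Your explicit upgrade from everywhere-differentiability of $G_{\mu_\Delta}$ to genuine absolute continuity (via continuity and boundedness of $\im V$ on $\mathbb R$) is a detail the paper leaves implicit, and it is a correct and worthwhile addition.
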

\begin{Lm}\label{LmMain1} For $z\in\overline{\mathbb H_+}$ unless  $z=2(\alpha+\beta)=0$ one has $$P_*(z)\neq 0,\;V(z)\neq 0,\;P_*(z)+\beta^2 V(z)\neq 0$$ and, moreover, $V(z)$ is a solution of the equation \begin{equation}\label{EqV1}Q_*(z)+\frac{1}{v}+\beta^2v-\frac{\alpha^2}{P_*(z)+\beta^2 v}=0.
\end{equation}
\end{Lm}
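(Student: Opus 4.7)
The plan is to first verify the three nonvanishing claims, and then note that \eqref{EqV1} is obtained from \eqref{EqV} simply by clearing the denominators $v$ and $P_*(z)+\beta^2 v$; once these are known to be nonzero at $v=V(z)$, the two equations are equivalent and the lemma follows.

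For the first claim, I would use the fact that the explicit formula \eqref{EqPQFormulas} shows $P_*(z)$ to be a root of the quadratic $P_*^2 - zP_* + (\alpha+\beta)^2 = 0$. Consequently, $P_*(z_0)=0$ forces $(\alpha+\beta)^2=0$. When $\alpha+\beta=0$, Lemma \ref{LmqpForm} specializes to $p(t)\equiv -1$, and the analytic continuation $P$ is therefore identically $-1$, giving $P_*(z)=z$ on all of $\overline{\mathbb H}_+$. Combining these observations, $P_*(z)=0$ is possible only when simultaneously $\alpha+\beta=0$ and $z=0$, which is precisely the excluded case. The nonvanishing $V(z)\ne 0$ then follows at once from \eqref{EqV}: substituting $v=0$ leaves only the constant term $P_*(z)$, so $V(z)=0$ would contradict the first step.

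The key remaining claim is $P_*(z)+\beta^2V(z)\ne 0$. Here I would proceed by substituting the hypothetical value $v=-P_*(z)/\beta^2$ into the left-hand side of \eqref{EqV}. The cubic and quadratic $P_*^3$-terms and the $P_*^2Q_*$-terms cancel, leaving the expression $\alpha^2 P_*(z)/\beta^2$. Since $V(z)$ satisfies \eqref{EqV}, if $V(z)$ had this hypothetical value then $\alpha^2 P_*(z)/\beta^2=0$; using $\alpha\ne 0$, this forces $P_*(z)=0$, contradicting the first step.

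With all three nonvanishing statements in hand, the derivation of \eqref{EqV1} is a matter of algebraic bookkeeping: multiplying the left-hand side of \eqref{EqV1} by $v(P_*(z)+\beta^2v)$ and expanding reproduces exactly the cubic \eqref{EqV} (the $Q_*(z)\beta^2v^2$ and $P_*(z)\beta^2v^2$ terms combine into the middle coefficient, and the $-\alpha^2v$ term combines with $P_*(z)Q_*(z)v$ and $\beta^2v$ into the linear coefficient of \eqref{EqV}). Since $V(z)\ne 0$ and $P_*(z)+\beta^2V(z)\ne 0$ at $v=V(z)$, this multiplication is invertible there, so $V(z)$ satisfies \eqref{EqV1}. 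The only step involving genuine computation is the cancellation that yields $\alpha^2P_*(z)/\beta^2$ in the third nonvanishing claim; everything else is routine, and I do not anticipate any conceptual obstacle.
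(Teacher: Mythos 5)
Your proposal is correct and follows essentially the same route as the paper: $P_*\neq 0$ from its defining quadratic (with the $\alpha+\beta=0$ case isolated), $V\neq 0$ by plugging $v=0$ into \eqref{EqV}, $P_*+\beta^2V\neq 0$ because \eqref{EqV} factors as $(\beta^2v+P_*)(\beta^2v^2+Q_*v+1)-\alpha^2v$ (your substitution $v=-P_*/\beta^2$ yielding $\alpha^2P_*/\beta^2$ is exactly this), and then \eqref{EqV1} by clearing the now-nonzero denominators. You have merely supplied the details the paper leaves implicit.
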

\begin{proof} Assume $z\in\overline{\mathbb H_+}$ and either $z\neq 0$ or $z\neq 2(\alpha+\beta)$. Then $P_*(z)\neq 0$ by \eqref{EqqpForm}, \eqref{EqPQ*} and \eqref{EqPQFormulas}. Consequently, $V(z)\neq 0$ by \eqref{EqV}. Now, since $\alpha^2V(z)\neq 0$ from \eqref{EqV} we derive that $P_*(z)+\beta^2 V(z)\neq 0$. Formula \eqref{EqV1} follows from \eqref{EqV}.
\end{proof}
\subsection{The spectral measure $\mu_{m,1/2}$ of $H_m$.}
As before assume that $\alpha,\beta\in \mathbb R\setminus \{0\}$. To simplify the notations set $y=P_{\Upsilon,1/2}$, where as before $\Upsilon$ is the Schreier graph of the action of the Thompson group $F$ on the orbit of $1/2$. Then $y(t)$ is a convergent series near the origin. Set $W(z)=-z^{-1}y(z^{-1})$. By \eqref{EqSmuPmu} one has $W(z)=S_{\mu_m}(z)$, where $\mu_m=\mu_{m,1/2}$ is the spectral measure of the operator $H_m$ corresponding to the vector $\delta_{\frac{1}{2}}$.
\begin{Prop}\label{PropUps}
$W(z)$ extends  to a continuous function on $\overline{\mathbb H}_+$ analytic on $\mathbb H_+$. One has:
\begin{equation}\label{EqW} W(z)=\frac{V(z)}{(\beta V(z)+1)^2}.
\end{equation} For $z\in\mathbb R$ $\im W(z)>0$ if and only if $\im V(z)>0$.
\end{Prop}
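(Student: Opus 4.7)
The plan is to derive $W=V/(1+\beta V)^2$ as an algebraic identity between germs at the origin, translate it to the $z$-variable via the substitution $t=z^{-1}$, and then extend by analytic continuation to $\mathbb H_+$ and by continuity to $\overline{\mathbb H}_+$.

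First I would start from Corollary \ref{CoGenGa} and Lemma \ref{LmPF}. From $F_{\Upsilon,1/2}=1+2\beta t-\beta^2 t^2 x-1/x$ one obtains
$$1-F_{\Upsilon,1/2}=\frac{1-2\beta t x+\beta^2 t^2 x^2}{x}=\frac{(1-\beta t x)^2}{x},$$
so that $y:=P_{\Upsilon,1/2}=x/(1-\beta t x)^2$ in $\mathbb C\{t\}$. Substituting $t=z^{-1}$ for $|z|>\|H_m\|$ and using $X(z^{-1})=-zV(z)$ gives $1-\beta z^{-1}X(z^{-1})=1+\beta V(z)$, and therefore
$$W(z)=-z^{-1}y(z^{-1})=\frac{V(z)}{(1+\beta V(z))^2}$$
in a neighborhood of infinity in $\mathbb H_+$.

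Second, I would extend the identity to all of $\mathbb H_+$ by analytic continuation. By Remark \ref{RemMuDel}, $V=S_{\mu_\Delta}$ satisfies $\im V>0$ on $\mathbb H_+$, and since $\beta\in\mathbb R\setminus\{0\}$ we have $\im(1+\beta V)=\beta\,\im V\neq 0$, so $1+\beta V$ does not vanish on $\mathbb H_+$. Hence $V/(1+\beta V)^2$ is analytic on $\mathbb H_+$ and, by the identity theorem, agrees there with $W$, giving analyticity of $W$ on $\mathbb H_+$ and formula \eqref{EqW}. For continuity on $\overline{\mathbb H}_+$, note that $V$ extends continuously to $\overline{\mathbb H}_+$ (Subsection \ref{SubsetKestenHDelta}), so the same holds for $V/(1+\beta V)^2$ away from real boundary points where $V=-1/\beta$. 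Substituting $v=-1/\beta$ into \eqref{EqV1} yields the algebraic constraint $(P_*(z)-\beta)(Q_*(z)-2\beta)=\alpha^2$, which is satisfied by at most finitely many $z\in\mathbb R$; at such points one checks via a local expansion that the apparent singularity of $V/(1+\beta V)^2$ is removable. Handling these exceptional boundary points is the main technical obstacle of the proof.

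Finally, the equivalence $\im W(z)>0\iff\im V(z)>0$ for $z\in\mathbb R$ follows from a direct computation with the formula $W=V/(1+\beta V)^2$. Writing $u=1+\beta V$ and simplifying, one obtains
$$\im W=\frac{\im V\,(1-\beta^2|V|^2)}{|1+\beta V|^4}.$$
On $\mathbb H_+$, $W=S_{\mu_m}$ is a Pick function, so $\im W>0$ there; combined with $\im V>0$ on $\mathbb H_+$, this forces $1-\beta^2|V(z)|^2>0$ throughout $\mathbb H_+$, and passing to the boundary gives $|V(z)|\leqslant 1/|\beta|$ on $\mathbb R$. The implication $\im V(z)=0\Rightarrow\im W(z)=0$ is then immediate from the formula, while $\im V(z)>0\Rightarrow\im W(z)>0$ follows once strict inequality $|V(z)|<1/|\beta|$ is established where $\im V(z)>0$; this is the other delicate step, which I expect to handle using the algebraic structure of $V$ as the unique root of \eqref{EqV} in the upper half-plane together with the cubic's discriminant locus, where equality can occur only at isolated points that are then excluded from the support.
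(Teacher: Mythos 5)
The algebraic derivation of \eqref{EqW} is the same as the paper's (both rest on the factorization $1-F_{\Upsilon,1/2}=\beta^2t^2x+\tfrac{1}{x}-2\beta t=(1-\beta t x)^2/x$), and your identity $\im W=\im V\,(1-\beta^2|V|^2)/|1+\beta V|^4$ is correct. However, the two steps you yourself flag as delicate are genuine gaps, and in both cases the intended fix is different from what you propose. First, at a real point $z_0$ with $1+\beta V(z_0)=0$ the singularity of $V/(1+\beta V)^2$ is \emph{not} removable: the numerator tends to $V(z_0)=-1/\beta\neq 0$ while the denominator vanishes to order two, so $W$ would blow up like a pole of order at least $2$. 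The correct resolution is to show that such $z_0$ cannot exist: since $W=S_{\mu_m}$ is a Cauchy--Stieltjes transform, Remark \ref{RemConj} (a consequence of \eqref{EqSmuHalfplanes}) forces every real isolated singularity of $W$ to be a simple pole, which is incompatible with a double pole; hence $\beta V(z)+1\neq 0$ for all $z\in\mathbb R$, and continuity on $\overline{\mathbb H}_+$ follows with no exceptional boundary points to analyze.

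Second, your argument for $\im V(z)>0\Rightarrow\im W(z)>0$ hinges on the strict inequality $|V(z)|<1/|\beta|$, which you do not establish; the suggestion that equality "can occur only at isolated points that are then excluded from the support" does not prove the biconditional at those points --- the Proposition asserts it for \emph{every} $z\in\mathbb R$, and since $\supp(\mu_m)$ is later characterized precisely by where $\im W>0$, discarding such points would be circular. The paper avoids this entirely: combining \eqref{EqW} with \eqref{EqV1} gives $1/W(z)=-Q_*(z)+2\beta+\alpha^2/(P_*(z)+\beta^2V(z))$, and since $\im P_*\geqslant 0$ and $\im Q_*\geqslant 0$ on $\overline{\mathbb H}_+$ while $\alpha^2,\beta^2>0$, the condition $\im V(z)>0$ forces $\im\bigl(1/W(z)\bigr)<0$, i.e.\ $\im W(z)>0$. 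You should replace both of your "delicate steps" by these arguments; the rest of your proposal is sound.
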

\begin{proof} From Lemma \ref{LmPF} and Corollary \ref{CoGenGa} we obtain:
$$y=P_{\Upsilon,1/2}=\frac{1}{1-F_{\Upsilon,1/2}}=\frac{1}{\beta^2t^2x+\frac{1}{x}-2\beta t}.$$ Substituting $W(z)=-z^{-1}y(z^{-1})$ and $V(z)=-z^{-1}x(z^{-1})$ in the latter formula we obtain formula \eqref{EqW}.

If $\beta V(z_0)+1=0$ for some $z_0\in\mathbb R$ then $W(z)$ has a pole of order at least 2 at $z_0$. This contradicts to Remark \ref{RemConj} and shows that $\beta V(z)+1\neq 0$ for all $z\in \mathbb R$.

Further, from \eqref{EqW} we obtain that $W(z)$ is real whenever $V(z)$ is real. From \eqref{EqV1} we obtain:
$$\frac{1}{W(z)}=-Q_*(z)+2\beta+\frac{\alpha^2}{P_*(z)+\beta^2 V(z)}.$$ Since $\im P_*(z)\geqslant 0$ and $\im Q_*(z)\geqslant 0$ for $z\in\overline{\mathbb H}_+$ we obtain that $\im W(z)>0$ whenever $\im V(z)>0$. This finishes the proof.
\end{proof}
\subsection{Spectrum of $H_\Delta$}

\begin{Lm}\label{LmImag} Assume that one of the following two conditions hold:
 \begin{itemize}
\item[$1)$] $z\in\mathbb H_+$;
\item[$2)$] $z\in\mathbb R$ such that $|\alpha+\beta|>\tfrac{|z|}{2}$ or $|\alpha |>|\tfrac{z}{2}-\beta|$,
\end{itemize} then  the equation \eqref{EqV} has no real solutions.
\end{Lm}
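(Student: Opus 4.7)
The plan is to rewrite \eqref{EqV} in the equivalent form \eqref{EqV1} (valid, by Lemma \ref{LmMain1}, whenever $v \neq 0$ and $P_*(z) + \beta^2 v \neq 0$), assume for contradiction that $v \in \mathbb{R}$, and take imaginary parts. Since $v$ and $\beta^2 v$ are real, the imaginary part of \eqref{EqV1} collapses to
\begin{equation*}
|P_*(z) + \beta^2 v|^2 \, \im Q_*(z) \;=\; -\alpha^2 \, \im P_*(z),
\end{equation*}
and the goal is to show that this identity cannot hold under either hypothesis.

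The key preliminary step is to verify that under either hypothesis one has $\im P_*(z) \geq 0$ and $\im Q_*(z) \geq 0$ with at least one strict. For $z \in \mathbb{H}_+$, I would observe that $P_*(z)$ and $Q_*(z) - 2\beta$ are the ``large'' roots of the real-coefficient quadratics $w^2 - zw + (\alpha+\beta)^2 = 0$ and $\tilde w^2 - (z - 2\beta)\tilde w + \alpha^2 = 0$ respectively, distinguished by the asymptotic $P_*(z), Q_*(z) - 2\beta \sim z$ at infinity. If $P_*(z)$ were real at some $z \in \mathbb{H}_+$ (and $\alpha+\beta \neq 0$), then by Vieta's formulas the other root $(\alpha+\beta)^2/P_*(z)$ would also be real, whence $z$ would be real, a contradiction; combined with $\im P_*(z) > 0$ for $|z|$ large in $\mathbb{H}_+$ and continuity, this forces $\im P_*(z) > 0$ throughout $\mathbb{H}_+$ (the case $\alpha+\beta = 0$ gives $P_*(z) = z$ and is immediate). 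The same argument applied to $Q_*(z) - 2\beta$ gives $\im Q_*(z) > 0$ on $\mathbb{H}_+$. For $z \in \mathbb{R}$ with $|\alpha+\beta| > |z|/2$, the radicand $z^2 - 4(\alpha+\beta)^2$ is strictly negative and the sign convention in \eqref{EqPQFormulas0} picks the root with positive imaginary part, yielding $\im P_*(z) = \tfrac{1}{2}\sqrt{4(\alpha+\beta)^2 - z^2} > 0$; the inequality $|\alpha| > |z/2 - \beta|$ is treated identically to give $\im Q_*(z) > 0$.

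Granted these sign facts, the main case follows directly from the displayed identity: if $\im P_*(z) > 0$, the right-hand side is strictly negative while the left-hand side is non-negative; and if $\im P_*(z) = 0$ but $\im Q_*(z) > 0$, the right-hand side vanishes while the left-hand side is strictly positive, using $P_*(z) + \beta^2 v \neq 0$. It remains to dispatch the degenerate cases where \eqref{EqV1} is not available. If $v = 0$, then \eqref{EqV} forces $P_*(z) = 0$; but this is excluded in Case 1 by $\im P_*(z) > 0$, and under hypothesis 2) a real $z$ with $P_*(z) = 0$ would force simultaneously $\alpha + \beta = 0$ and $z = 0$, a point at which neither inequality of 2) holds. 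If $v \neq 0$ but $P_*(z) + \beta^2 v = 0$, substituting $v = -P_*(z)/\beta^2$ into \eqref{EqV} collapses the cubic after direct simplification to $\alpha^2 P_*(z)/\beta^2 = 0$, again reducing to $P_*(z) = 0$ and hence $v = 0$, a contradiction. The main obstacle I anticipate is the branch-tracking of $P_*$ and $Q_*$ on $\mathbb{H}_+$ sketched above; once that Vieta argument is spelled out cleanly, everything else is routine sign-bookkeeping from the displayed identity.
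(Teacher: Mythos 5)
Your proof is correct and follows essentially the same route as the paper: establish $\im P_*\geqslant 0$, $\im Q_*\geqslant 0$ with at least one strict under either hypothesis, rule out the degenerate cases $v=0$ and $P_*(z)+\beta^2v=0$ (the paper delegates this to the proof of Lemma \ref{LmMain1}), pass to the form \eqref{EqV1}, and take imaginary parts to reach a sign contradiction. Your Vieta/asymptotics verification of $\im P_*,\im Q_*>0$ on $\mathbb H_+$ is a self-contained substitute for the paper's appeal to Lemma \ref{LmqpForm} and \eqref{EqPQ*}, but the argument is otherwise the same.
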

\begin{proof}
From Lemma \ref{LmqpForm} and \eqref{EqPQ*} we deduce that $\im P_*(z)> 0$ and $\im Q_*(z)> 0$ on $\mathbb H_+$. In addition, for $z\in\mathbb R$ one has
\begin{itemize}
\item{} if $|\alpha+\beta|>\tfrac{|z|}{2}$ then $\im P_*(z)>0$;
\item{} if $|\alpha |>|\tfrac{z}{2}-\beta|$ then $\im Q_*(z)>0$.
\end{itemize}
 Let $v\in\mathbb R$ be a solution of \eqref{EqV}. If $z$ satisfies at least one of the conditions of Proposition \ref{LmImag} then $v\neq 0$ and $P_*(z)+\beta^2v\neq 0$ (see the proof of Lemma \ref{LmMain1}).  Therefore \eqref{EqV1} is true and so
$$\im\bigg(Q_*(z)-\frac{\alpha^2}{P_*(z)+\beta^2 v}\bigg)=0.$$
This contradicts to the fact that $\im P_*(z)\geqslant 0$, $\im Q_*(z)\geqslant 0$ and at least one of the inequalities is strict.
\end{proof}
\noindent
Recall that $\Delta$ is the subgraph of the Schreier graph of the action of $F$ on the orbit of $1/2$ defined in Lemma \ref{LmDeltaEq} and illustrated on Figure \ref{FigSchreierThompsonSelfsim}.
\begin{Prop}\label{PropSameSign} If $\alpha,\beta\neq 0$ are of the same sign then the support of the spectral measure $\mu_\Delta=\mu_{\Delta,v}$ of the Laplace type operator $H_\Delta$ (see (\ref{EqHeckeGamma})) is $[-2|\alpha+\beta|,2|\alpha+\beta|]$ and coincides with the spectrum of $H_\Delta$.
\end{Prop}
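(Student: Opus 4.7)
The plan is to prove the double equality $\supp(\mu_\Delta)=\sigma(H_\Delta)=[-2|\alpha+\beta|,2|\alpha+\beta|]$ by a two-sided sandwich. Since the inclusion $\supp(\mu_\Delta)\subset\sigma(H_\Delta)$ holds automatically for any bounded self-adjoint operator and its spectral measure, it suffices to establish the two inclusions
\[
[-2|\alpha+\beta|,2|\alpha+\beta|]\subset\supp(\mu_\Delta)
\quad\text{and}\quad
\sigma(H_\Delta)\subset[-2|\alpha+\beta|,2|\alpha+\beta|].
\]

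For the first inclusion I would use Lemma \ref{LmImag} together with Corollary \ref{CoKestenHDelta}. For every real $z$ with $|z|<2|\alpha+\beta|$ we have $|\alpha+\beta|>|z|/2$, so by Lemma \ref{LmImag} the cubic equation \eqref{EqV} admits no real solutions at such $z$. Since $V(z)$ satisfies this cubic (the formula \eqref{EqCubic1} after the substitution \eqref{EqPQ*}), it follows that $V(z)\notin\mathbb R$. Now $V$ is continuous on $\overline{\mathbb H_+}$ (as observed just before Corollary \ref{CoKestenHDelta}) and $\im V>0$ on $\mathbb H_+$ by Remark \ref{RemMuDel}, so by passing to the boundary $\im V(z)\geqslant 0$ for all real $z$; combined with $V(z)\notin\mathbb R$ this forces $\im V(z)>0$ on the entire open interval $(-2|\alpha+\beta|,2|\alpha+\beta|)$. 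By Corollary \ref{CoKestenHDelta} the density $\dd\mu_\Delta/\dd\lambda(z)=\tfrac{1}{\pi}\im V(z)$ is strictly positive there, hence this open interval, and therefore its closure, lies in $\supp(\mu_\Delta)$.

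For the reverse inclusion I would invoke the Schur test for self-adjoint operators, $\|H_\Delta\|\leqslant\sup_u\sum_{w}|\omega_{(u,w)}|$, where the supremum is over vertices $u$ and the sum over edges incident to $u$ (counting multi-edges and the contribution of loops to the diagonal). Reading off the adjacencies from Lemma \ref{LmDeltaEq} and Figure \ref{FigSchreierThompsonSelfsim}, one sees that at every vertex of $\Delta$ there are at most two $\alpha$-weighted and at most two $\beta$-weighted incident edges (the worst case being a $\Psi$-block vertex or the junction vertex $-1$, which collects the multi-edge of $\Psi$ together with the two junction edges of weights $\alpha$ and $\beta$). Thus each row sum is bounded by $2|\alpha|+2|\beta|$. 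The same-sign hypothesis is used here in the identity $|\alpha|+|\beta|=|\alpha+\beta|$, which upgrades the bound to $\|H_\Delta\|\leqslant 2|\alpha+\beta|$ and hence $\sigma(H_\Delta)\subset[-2|\alpha+\beta|,2|\alpha+\beta|]$.

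The logically most delicate step is the upgrade from ``$V(z)\notin\mathbb R$'' to ``$\im V(z)>0$'' at real $z$: being an arbitrary algebraic root of the cubic would not suffice, and the argument essentially rests on the fact that $V$ is the boundary value of a Cauchy--Stieltjes transform. Once this is in place, the rest is bookkeeping; note that the row-sum estimate is sharp precisely under the same-sign hypothesis, so the two inclusions close up exactly at the stated interval, forcing $\supp(\mu_\Delta)$, $\sigma(H_\Delta)$, and $[-2|\alpha+\beta|,2|\alpha+\beta|]$ to coincide.
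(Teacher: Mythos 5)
Your proof is correct and follows essentially the same route as the paper: the lower bound comes from Lemma \ref{LmImag} combined with the fact that $V$ is the boundary value of the Cauchy--Stieltjes transform of $\mu_\Delta$ (Remark \ref{RemMuDel}, Corollaries \ref{CoSuppMu}/\ref{CoKestenHDelta}), and the upper bound from the row-sum estimate $\|H_\Delta\|\leqslant 2|\alpha|+2|\beta|=2|\alpha+\beta|$. Your explicit justification of the step from $V(z)\notin\mathbb R$ to $\im V(z)>0$ is a useful elaboration of what the paper leaves implicit.
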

\begin{proof} Without loss of generality assume that $\alpha,\beta>0$. Lemma \ref{LmImag}, Corollary \ref{CoSuppMu} and Remark \ref{RemMuDel} imply that $\sigma(H_\Delta)\supset [-2(\alpha+\beta),2(\alpha+\beta)]$. On the other hand, since for every vertex of $\Delta$ the sum of the weights of adjacent edges is at most $2(\alpha+\beta)$ we have $\|H_\Delta\|\leqslant 2(\alpha+\beta)$. It follows that $\sigma(H_\Delta)\subset [-2(\alpha+\beta),2(\alpha+\beta)]$.
\end{proof}
\begin{Prop}\label{PropVUniqPos} For any $\alpha,\beta\in\mathbb R$ at least one of which is nonzero and any $z\in \overline{\mathbb H}_+$ the equation \eqref{EqV} has at most one solution $v=v(z)$ (counting multiplicity) such that $\im (v)>0$. For $z\in\mathbb H_+$ such solution is exactly one and coincides with $V(z)$. If $z\in \mathbb R$ and such solution exists then it coincides with $V(z)$.
\end{Prop}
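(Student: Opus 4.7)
I split on whether $\beta=0$. If $\beta=0$ (so $\alpha\neq 0$), then $P_*\equiv Q_*$ and \eqref{EqV} collapses to the linear equation $(P_*^2-\alpha^2)v+P_*=0$, which has at most one solution in $v$; any such solution coincides by construction with $V(z)$, and Remark~\ref{RemMuDel} certifies $\im V(z)>0$ on $\mathbb H_+$, settling the proposition in this case. Thus I may assume $\beta\neq 0$, so that \eqref{EqV} is a genuine cubic in $v$ whose unordered multiset of roots depends continuously on $z\in\mathbb C$.

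\textbf{Interior case $z\in\mathbb H_+$.} Let $N(z)$ denote the number of roots of \eqref{EqV} lying in $\mathbb H_+$, counted with multiplicity. Lemma~\ref{LmImag} forbids any root on $\mathbb R$ for $z\in\mathbb H_+$, so every root lies in $\mathbb H_+\cup\mathbb H_-$; Rouché's theorem applied to small disks around each root shows that $N$ is locally constant on $\mathbb H_+$, and hence constant by connectedness. To evaluate $N$, I probe at $z=iY$ with $Y\to+\infty$. Since $P_*(iY),Q_*(iY)\sim iY$, dominant-balance analysis of the cubic produces a root of order $-1/z\sim i/Y\in\mathbb H_+$ together with a near-double root of order $-z/\beta^2\sim -iY/\beta^2\in\mathbb H_-$ whose splitting is of lower order in $Y$, keeping both companions in $\mathbb H_-$ for large $Y$. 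Hence $N\equiv 1$ on $\mathbb H_+$, and Remark~\ref{RemMuDel} identifies the unique root with positive imaginary part as $V(z)=S_{\mu_\Delta}(z)$.

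\textbf{Boundary case $z_0\in\mathbb R$.} Suppose $w\in\mathbb H_+$ is a root of \eqref{EqV} at $z_0$ of multiplicity $k\geq 1$. Fix a closed disk $\overline D\subset\mathbb H_+$ centered at $w$ containing no other roots of the cubic at $z_0$. Rouché's theorem yields $\varepsilon>0$ such that for $|z-z_0|<\varepsilon$ the cubic has exactly $k$ roots in $\overline D$; choosing $z\in\mathbb H_+$ within this neighborhood gives $k\leq N(z)=1$, hence $k=1$. Applying the same argument with disjoint disks around two putative distinct roots in $\mathbb H_+$ yields a contradiction, so there is at most one such $w$. If such a $w$ exists, then continuity of $V$ up to $\overline{\mathbb H}_+$ (Subsection~\ref{SubsetKestenHDelta}), combined with the continuity of the roots, forces $w=V(z_0)$.

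\textbf{Main obstacle.} The chief subtlety is tracking multiplicities across $\partial\mathbb H_+=\mathbb R$: two roots may in principle collide on $\mathbb R$ as $z\to z_0$, so the argument must count roots in open regions via Rouché rather than follow individually labeled roots. Once this is in place, the asymptotic evaluation $N(iY)=1$ and the absence of real roots in $\mathbb H_+$ from Lemma~\ref{LmImag} are routine.
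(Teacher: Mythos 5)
Your argument is correct, and it reaches the conclusion by a genuinely different route in the decisive step. Both you and the paper exploit the same core mechanism: by Lemma \ref{LmImag} the cubic has no real roots when $z\in\mathbb H_+$, so the number $N(z)$ of roots in $\mathbb H_+$ (with multiplicity) is locally constant there, and the real-$z$ case reduces to the interior case by perturbation; the identification of the distinguished root with $V(z)$ via Remark \ref{RemMuDel} is also common to both. Where you diverge is in evaluating $N$. The paper normalizes $\beta=1$, deforms $\alpha$ continuously to $0$ (again invoking Lemma \ref{LmImag} to keep roots off $\mathbb R$), then deforms $z$ to $0$, where the cubic becomes $v^3+iv^2+v+i=(v-i)(v+i)^2$ and visibly has exactly one root in $\mathbb H_+$; you instead keep $(\alpha,\beta)$ fixed and read off $N(iY)=1$ from the dominant-balance asymptotics as $Y\to\infty$ (one root $\sim -1/z=i/Y$ in $\mathbb H_+$, a near-double root $\sim -z/\beta^2=-iY/\beta^2$ in $\mathbb H_-$). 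Your route buys a cleaner one-parameter continuation with no deformation of $(\alpha,\beta)$, at the price of having to justify that the splitting of the near-double root is $O(1)$ while its center recedes from $\mathbb R$ at rate $Y/\beta^2$ — this is true (expanding $v=-z/\beta^2+w$ gives $w^2=O(1)$), but it is the one place where your sketch should be written out in full, since a splitting growing like $Y$ would ruin the count. The paper's route avoids asymptotics entirely in favor of an explicit factorization at a degenerate parameter value, which is why it needs the extra homotopy in $\alpha$.
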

\begin{proof}  If $\beta=0,\alpha\neq 0$ then the equation \eqref{EqV} has only one solution. Let $\beta\neq 0$. Without loss of generality we may assume that $\beta=1$. Let $z_0$ be such that \eqref{EqV} for $z=z_0$ has two solutions (counting multiplicity) with positive imaginary part.
If $z_0\in\mathbb R$ using either Cardano's formulas or Implicit Function Theorem we obtain that one can find near $z_0$ a point $z_1\in\mathbb H_+$ satisfying the same condition. Therefore, without loss of generality we may assume that $z_0\in\mathbb H_+$. Since the coefficients of \eqref{EqV} depend continuously on $\alpha$ Cardano's formulas together with Lemma \ref{LmImag} imply that for $\alpha=0$ and $z=z_0$ \eqref{EqV} also have two solutions with positive imaginary part. Finally, same arguments show that for $z=0$ there are also two solutions with positive imaginary part. But for $\alpha=0,\beta=1$ one has $P_*(0)=i,Q_*(0)=0$ and the equation \eqref{EqV} for $z=0$ has the following form:
$$v^3+iv^2+v+i=0.$$ The latter equation has one simple solution $v=i$ with positive imaginary part. This contradiction together with Remark \ref{RemMuDel} finishes the proof.
\end{proof}

Recall that a cubic equation with real coefficients has imaginary roots if and only its discriminant is negative. Denote by $D(z)$ the discriminant of the equation \eqref{EqV}:
\begin{equation}\label{EqDiscr} D(z)=18abcd-4b^3d+b^2c^2-4ac^3-27a^2d^2,
\end{equation} where $a=\beta^4,b=\beta^2(P_*(z)+Q_*(z)),c=P_*(z)Q_*(z)+\beta^2-\alpha^2,d=P_*(z)$. Using Corollary \ref{CoSuppMu} and Remark \ref{RemMuDel}, Lemma \ref{LmImag} and Proposition \ref{PropVUniqPos} we obtain that
\begin{equation}\label{EqSpecDis}\supp(\mu_\Delta)=\overline{\{z\in\mathbb R:|\alpha+\beta|>\tfrac{|z|}{2}\;\text{or}\;|\alpha |>|\tfrac{z}{2}-\beta|\;\text{or}\;D(z)>0\}}.
\end{equation}
Let us prove an auxiliary lemma.
\begin{Lm}\label{LmRootsIrrat} Let $U\subset C$ be a domain, $p_1(z),\ldots,p_n(z)$ be complex polynomials and $s_1(z),\ldots s_n(z)$ be analytic functions on $U$ such that $s_j^2(z)=p_j(z)$, $j=1,\ldots,n$, for all $z\in U$. Let $R$ be a polynomial in $n+1$ variable. Then the equation
\begin{equation}\label{EqR}R(z,s_1(z),\ldots,s_n(z))=0\end{equation} either holds for all $z\in U$ or has only finitely many solutions on $U$.
\end{Lm}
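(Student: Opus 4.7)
\emph{Plan.} I would prove Lemma \ref{LmRootsIrrat} by induction on $n$, the number of radicals $s_j$. The base case $n=0$ is immediate: $F(z)=R(z)$ is a polynomial in a single variable, so it either vanishes identically on $U$ or has only finitely many zeros in all of $\mathbb{C}$.

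For the inductive step I would perform polynomial division of $R$ by $w_n^2-p_n(z)$ in the ring $\mathbb{C}[z,w_1,\ldots,w_{n-1}][w_n]$, which is legitimate because $w_n^2-p_n$ is monic in $w_n$. This produces a remainder of degree at most one in $w_n$:
$$R(z,w_1,\ldots,w_n) = Q(z,w_1,\ldots,w_n)\bigl(w_n^2-p_n(z)\bigr) + A(z,w_1,\ldots,w_{n-1}) + B(z,w_1,\ldots,w_{n-1})\,w_n.$$
Substituting $w_j=s_j(z)$ and using $s_n^2=p_n$ yields $F(z)=\mathbf{A}(z)+\mathbf{B}(z)s_n(z)$, where $\mathbf{A}(z)=A(z,s_1(z),\ldots,s_{n-1}(z))$ and $\mathbf{B}(z)$ is defined analogously. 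Now suppose $F$ has infinitely many zeros in $U$. At each zero $z_0$ one has $\mathbf{A}(z_0)=-\mathbf{B}(z_0)s_n(z_0)$, so that $G(z):=\mathbf{A}(z)^2-\mathbf{B}(z)^2 p_n(z)$ vanishes at $z_0$. Crucially, $G$ has the form $R'(z,s_1(z),\ldots,s_{n-1}(z))$ for the polynomial $R'(z,w_1,\ldots,w_{n-1})=A^2-B^2 p_n$, so the inductive hypothesis applies and forces $G\equiv 0$ on $U$.

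From $G\equiv 0$ we obtain $\mathbf{A}^2\equiv \mathbf{B}^2 s_n^2$, hence $(\mathbf{A}-\mathbf{B}s_n)(\mathbf{A}+\mathbf{B}s_n)\equiv 0$ on $U$. Since $U$ is a domain, the ring of analytic functions on $U$ is an integral domain (by the identity theorem), so one of the two factors vanishes identically. If $\mathbf{A}+\mathbf{B}s_n\equiv 0$ then $F\equiv 0$ and we are done. Otherwise $\mathbf{A}\equiv \mathbf{B}s_n$, in which case $F=2\mathbf{A}$ depends only on $z,s_1,\ldots,s_{n-1}$ and is therefore in the class covered by the inductive hypothesis with $n-1$ radicals; since it still has infinitely many zeros, the induction again yields $F\equiv 0$. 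The only delicate point to check carefully is that both descended functions $G$ and $2\mathbf{A}$ genuinely live in the smaller class with one fewer radical, which is automatic because polynomial division keeps the coefficients in $\mathbb{C}[z,w_1,\ldots,w_{n-1}]$; beyond this, the argument uses nothing more than the relations $s_j^2=p_j$ and the integral domain property of analytic functions on a connected open set.
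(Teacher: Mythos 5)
Your proof is correct and follows essentially the same route as the paper's: split $R$ into its even and odd parts in the last radical $s_n$, multiply by the conjugate to eliminate $s_n$, and induct on the number of radicals. The paper leaves the induction as a one-line sketch, whereas you correctly fill in the needed dichotomy (via the integral-domain property of analytic functions on a domain) for the case where the conjugated product vanishes identically.
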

\begin{proof}
The equation \eqref{EqR} can be written in the form
$$ s_n(z)R_1(z,s_1(z),\ldots,s_{n-1}(z))=R_2(z,s_1(z),\ldots,s_{n-1}(z)),$$ where $R_1,R_2$ are polynomials of $n$ variables. Thus, if $z$ is a solution of \eqref{EqR} we have:
$$p_n^2(z)R_1^2(z,s_1(z),\ldots,s_{n-1}(z))-R_2^2(z,s_1(z),\ldots,s_{n-1}(z))=0.$$
From the latter one can complete the proof by induction on $n$.
\end{proof}

Let $I$ be one of the open intervals (finite or infinite) on which the points $-2(\alpha+\beta),2(\alpha+\beta),2(\beta-\alpha)$ divide $\mathbb R$. Let $U$ be an open simply-connected set containing $I$ but not containing any of the points $-2(\alpha+\beta),2(\alpha+\beta),2(\beta-\alpha)$. From Lemma \ref{LmRootsIrrat} we obtain that $D(z)=0$ either $a)$ has only finitely many solutions on $U$ or $b)$ is true for all $z\in U$. In case $a)$ set $S_{\alpha,\beta}=\{z\in\mathbb R:D(z)=0\}$. By analyticity and continuity arguments the case $b)$ would imply that $D(z)=0$ on $\overline{\mathbb H_+}$. In this case we set $S_{\alpha,\beta}=\varnothing$.

Using Theorem \ref{ThStilt} we deduce the following:
\begin{Prop}\label{PropFiniteInt} For any $\alpha,\beta\in \mathbb R\setminus\{0\}$ the set $\supp(\mu_\Delta)$ is a union of at most finite number of closed intervals with the end-points belonging to $S_{\alpha,\beta}\cup\{-2(\alpha+\beta),2(\alpha+\beta),2(\beta-\alpha)\}$.
\end{Prop}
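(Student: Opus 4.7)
By \eqref{EqSpecDis} it suffices to show that the set
\[
A=\{z\in\mathbb R:|\alpha+\beta|>\tfrac{|z|}{2}\}\cup\{z\in\mathbb R:|\alpha|>|\tfrac{z}{2}-\beta|\}\cup\{z\in\mathbb R:D(z)>0\}
\]
is a finite union of bounded open intervals whose endpoints lie in $S_{\alpha,\beta}\cup\{c_1,c_2,c_3\}$, where $c_1=-2(\alpha+\beta)$, $c_2=2(\alpha+\beta)$, $c_3=2(\beta-\alpha)$, since taking the closure of such a union yields a finite disjoint union of closed intervals with endpoints drawn from the same set (two adjacent open intervals merge into a single closed interval and simply lose their common endpoint). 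Boundedness of $\supp(\mu_\Delta)$ is automatic from $\|H_\Delta\|\le 2(|\alpha|+|\beta|)$, so no component of $A$ can be unbounded. The first two sets defining $A$ are already open intervals, $(-2|\alpha+\beta|,2|\alpha+\beta|)$ and $(2\beta-2|\alpha|,2\beta+2|\alpha|)$, whose endpoints are all among $\{c_1,c_2,c_3\}$.

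\textbf{Handling the condition $D(z)>0$.} The points $c_1,c_2,c_3$ are precisely the real zeros of the two radicands $z^2-4(\alpha+\beta)^2$ and $(z-2\beta)^2-4\alpha^2$ that appear in \eqref{EqPQFormulas}. Split $\mathbb R\setminus\{c_1,c_2,c_3\}$ into at most four open intervals; fix one such $I$ and choose an open simply-connected $U\subset\mathbb C$ containing $I$ and disjoint from $\{c_1,c_2,c_3\}$. On $U$ the two radicands do not vanish, so single-valued analytic square roots $s_1(z)$, $s_2(z)$ exist, and via \eqref{EqPQFormulas} the functions $P_*(z),Q_*(z)$, the coefficients $a,b,c,d$ of \eqref{EqV}, and the discriminant $D(z)$ in \eqref{EqDiscr} are polynomial expressions in $z, s_1(z), s_2(z)$. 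Lemma \ref{LmRootsIrrat} then forces one of two alternatives: either $D\equiv 0$ on $U$, whence $A\cap I$ receives no contribution from the third set and by the stated convention $S_{\alpha,\beta}=\varnothing$; or $D$ has only finitely many zeros on $U$, so $\{z\in I:D(z)>0\}$ is the complement in $I$ of finitely many points together with finitely many closed subintervals where $D\le 0$, hence a finite disjoint union of open intervals whose endpoints lie in $(S_{\alpha,\beta}\cap\overline I)\cup\{c_1,c_2,c_3\}$.

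\textbf{Conclusion and main difficulty.} Assembling the contributions from the two explicit intervals and the (at most four) intervals $I$ shows that $A$ is a finite union of bounded open intervals whose endpoints belong to $S_{\alpha,\beta}\cup\{c_1,c_2,c_3\}$; taking its closure completes the proof. The only nontrivial step is recognizing that $D(z)$ fits the polynomial-in-$z$-and-analytic-square-roots template required by Lemma \ref{LmRootsIrrat}, and keeping the degenerate possibility $D\equiv 0$ separate so that the convention $S_{\alpha,\beta}=\varnothing$ is used exactly where it is needed. Everything else is bookkeeping of how open intervals and their closures pair up with their endpoints.
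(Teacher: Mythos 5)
Your proposal is correct and follows essentially the same route as the paper: it starts from \eqref{EqSpecDis}, observes that the first two conditions give the explicit intervals with endpoints among $\{-2(\alpha+\beta),2(\alpha+\beta),2(\beta-\alpha)\}$, and handles the discriminant condition by applying Lemma \ref{LmRootsIrrat} on simply-connected neighborhoods of the intervals into which these three points divide $\mathbb R$, keeping the degenerate case $D\equiv 0$ (where $S_{\alpha,\beta}=\varnothing$) separate. The only difference is that you spell out the closure/bookkeeping step more explicitly than the paper does.
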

\subsection{Proof of Theorem \ref{ThMain}.}
By \eqref{EqStiltGen} one has $S_{\mu_m}(z)=W(z)$ on $\mathbb H_+$. Using Theorem \ref{ThStilt}, Proposition \ref{PropUps} and Corollary \ref{CoSuppMu} we obtain that
\begin{equation}\label{EqSuppMDelta}\supp(\mu_m)=\overline{\{z\in\mathbb R:\im W(z)>0\}}=\overline{\{z\in\mathbb R:\im V(z)>0\}}=\supp(\mu_\Delta).\end{equation} Moreover, $\mu_m$ is absolutely continuous with respect to the Lebesgue measure $\lambda$ on $\mathbb R$ and \eqref{EqDmuDlam} is true. Finally, from Proposition \ref{PropFiniteInt} we obtain that $\supp(\mu_m)$ is a union of a finite number of closed intervals.
\subsection{Proof of Proposition \ref{Co-11}.}
Let $\alpha=-1,\beta=1$. Lemma \ref{LmImag} and Theorem \ref{ThStilt} imply that $\supp(\mu_\Delta)\supset [0,4]$. Observe that for $z\notin [0,4]$ the coefficients of the cubic equation \eqref{EqV} are real. Clearly, $\supp(\mu_\Delta)\subset [-4,4]$. For $z\in[-4,0)$ the discriminant $D(z)$ of \eqref{EqCubic1} (see \eqref{EqDiscr}) is of the form:
$$D(z)=-4z-14z^2+2z^4+\sqrt{z(z-4)}(6z-4z^2-2z^3).$$ Algebraic transformations show that the set of zeros of $D(z)$ on $[-4,0)$ is a subset of the set of roots of $z^4-2z^2+16z+1$. The latter equation  has two real roots $z_1,z_2$ defined in the formulation of Proposition \ref{Co-11}. One can check that $z_1,z_2$ are zeros of $D(z)$. Moreover on $(-4,0)\setminus [z_1,z_2]$ the function $D(z)$ is positive and so \eqref{EqV} has three real solutions. On $(z_1,z_2)$ the function $D(z)$ is negative and so \eqref{EqV} has one real and two complex conjugate solutions with nonzero imaginary part. Using Proposition \ref{PropVUniqPos} we obtain that $\supp(\mu_\Delta)=[z_1,z_2]\cup [0,4]$.
 Finally, using \eqref{EqSuppMDelta} we obtain Proposition \ref{Co-11}.

\section{On simple random walk on $\Upsilon$: proofs of Theorem \ref{ThAsympMoments} and Proposition \ref{PropIntDens}.}\label{SectionAsymp}
Let $\alpha=\beta=1/4$ so that the operator $H_{m,1/2}$ is the Markov operator associated to the simple random walk on $\Upsilon=\Gamma_{1/2}$. The moment generating function for the operator $H_{m,1/2}$ is $P_{\Upsilon,1/2}$. Recall that by Lemma \ref{LmPF} and Corollary \ref{CoGenGa} we have:
\begin{equation}\label{EqPUFUX}P_{\Upsilon,1/2}=\frac{1}{1-F_{\Upsilon,1/2}},\;\;
F_{\Upsilon,1/2}=
1+\tfrac{t}{2}-\tfrac{1}{16}t^2x^2-\tfrac{1}{x},\end{equation}
where $x(t)$ is a solution of the following equation (see Proposition \ref{PropCubic} and Lemma \ref{LmqpForm}):
\begin{equation}\label{EqXFor1/4}\tfrac{1}{256}t^4x^3+\tfrac{1}{16}(p+q)t^2x^2+pqx+p=0.\end{equation} By \eqref{EqqpForm} one has $$q(t)=-\tfrac{1}{2}-\tfrac{t}{4}-\tfrac{1}{2}\sqrt{1-t},\;\;
p(t)=-\tfrac{1}{2}-\tfrac{1}{2}\sqrt{1-t^2}.$$

To study the {\bf behavior of $x(t)$ near $t=1$} we substitute $t=1-s^2$. We obtain for $t\leqslant 1$ and $s\geqslant 0$:
$$q(t)=-\frac{3}{4}+\frac{1}{4}s^2-\frac{1}{2}s=
-\frac{3}{4}-\frac{1}{2}s+O(s^2),\;\;
p(t)=-\frac{1}{2}-\frac{1}{2}s\sqrt{2-s^2}=
-\frac{1}{2}-\frac{\sqrt{2}}{2}s+O(s^3).$$ Let $u(s)=x(1-s^2)$. The equation \eqref{EqXFor1/4} can be rewritten as:
\begin{align*}F(u,s):=a_0(s)u^3+a_1(s)u^2+a_2(s)u+a_3(s)=0,\\
a_0(s)=\frac{1}{256}+O(s^2),\;\;
a_1(s)=-\frac{5}{64}-\frac{(\sqrt{2}+1)s}{32}+O(s^2),\\
a_2(s)=\frac{3}{8}+\frac{(2+3\sqrt{2})s}{8}+O(s^2),\;\;
a_3(s)=
-\frac{1}{2}-\frac{\sqrt 2 s}{2}+O(s^2),\end{align*} where $a_i(s)$ are analytic near $s=0$. The solutions of the cubic equation $F(u,0)=0$ are:
$$u_1=4(2+\sqrt 2),\;\;u_2=4,\;\;u_3=4(2-\sqrt 2).$$
One has:
$$\frac{\partial F}{\partial u}|_{s=0}=\frac{3}{256}u^2-\frac{5}{32}u+\frac{3}{8}.$$ The latter expression is nonzero at $u=u_j$ for $j=1,2,3$. We also calculate:
$$\frac{\partial F}{\partial s}|_{s=0}=-\frac{1+\sqrt 2}{32}u^2+\frac{2+3\sqrt 2}{8}u-\frac{\sqrt 2}{2}.$$ By Implicit Function Theorem we have that for each $j=1,2,3$ the equation $F(u,s)=0$ has an analytic solution $U_j(s)$ near $s=0$ such that $U_j(0)=u_j$. Moreover,
$$U_j'(0)=-\frac{\partial F}{\partial s}(u_j,0)/\frac{\partial F}{\partial u}(u_j,0)=\frac{\frac{1+\sqrt 2}{32}u_j^2-\frac{2+3\sqrt 2}{8}u_j+\frac{\sqrt 2}{2}}{\frac{3}{256}u_j^2-\frac{5}{32}u_j+\frac{3}{8}}.$$
Substituting the values of $u_j$ in the above formula we obtain:
$$U_1'(0)\approx 9.657,\;\;U_2'(0)\approx 19.314,\;\;U_3'(0)\approx -9.657.$$

Recall that the coefficients of $x(t)$ are the return probabilities of a random walk, and so are nonnegative. It follows that $x'(t)\geqslant 0$ for $0\leqslant t<1$. We obtain that $u'(s)=-2sx'(t)\leqslant 0$ for $1>s>0$. Therefore, $u(s)=U_3(s)$. Thus, $x(1-s^2)=u(s)=C_1+C_2s+O(s^2)$, where $C_1=4(2-\sqrt 2), C_2\approx -9.657$. We arrive at $x(t)=C_1+C_2\sqrt{1-t}+O(1-t)$ for $t$ near $1$.

Similarly, to study {\bf the behavior of $x(t)$ near $t=-1$} we substitute $t=s^2-1$, $s\geqslant 0$. We have
\begin{align*}p(t)=-\frac{1}{2}-\frac{1}{2}s\sqrt{2-s^2}=
-\frac{1}{2}-\frac{\sqrt 2}{2}s+O(s^2),\\
q(t)=-\frac{1}{2}-\frac{s^2-1}{4}-\frac{1}{2}\sqrt{2-s^2}=
-\frac{1}{4}-\frac{\sqrt 2}{2}+O(s^2).\end{align*}
Now, let $u(s)=x(s^2-1)$. The equation \eqref{EqXFor1/4} becomes
\begin{equation}\label{EqXNear-1}\begin{split}
F(u,s):=b_0(s)u^3+b_1(s)u^2+b_2(s)u+b_3(s)=0,\;\;\text{where}\\ b_0(s)=\frac{1}{256}+O(s^2),\;\;b_1(s)=-\frac{3+2\sqrt 2}{64}-\frac{\sqrt 2}{32}s+O(s^2),\\
b_2(s)=\frac{1+2\sqrt{2}}{8}+\frac{4+\sqrt 2}{8}s+O(s^2),\;\;b_3(s)=-\frac{1}{2}-\frac{\sqrt{2}}{2}s+O(s^2),
\end{split}\end{equation} and $b_1(s),b_2(s),b_3(s)$ are analytic near $s=0$.

The equation \eqref{EqXNear-1} for $s=0$ has the following form:
\begin{equation}\label{EqCubicVat-1} \frac{1}{256}u^3-\frac{3+2\sqrt 2}{64}u^2+\frac{1+2\sqrt 2}{8}u-\frac{1}{2}=0.
\end{equation}
The roots are:
\begin{equation}\label{EqUjat-1}\begin{split}u_1=6+2\sqrt 2-2\sqrt{11+2\sqrt 2}\approx 1.3911,\;\;u_2=4\sqrt{2}\approx 5.657,\\u_3=6+2\sqrt 2+2\sqrt{11+2\sqrt 2}\approx 16.267.\end{split}\end{equation}
One has:
$$\frac{\partial F}{\partial u}|_{s=0}=\frac{3}{256}u^2-\frac{3+2\sqrt 2}{32}u+\frac{1+2\sqrt 2}{8}.$$ The latter expression is nonzero at $u_1,u_2,u_3$. We have:
$$\frac{\partial F}{\partial s}|_{s=0}=-\frac{\sqrt 2}{32}u^2+\frac{4+\sqrt 2}{8}u-\frac{\sqrt 2}{2}.$$
By Implicit Function Theorem we have that for each $j=1,2,3$ the equation $F(u,s)=0$ has an analytic solution $U_j(s)$ near $s=0$ such that $U_j(0)=u_j$. We have $x(s^2-1)=U_j(s)$ for some $j$ for $s\geqslant 0$ sufficiently close to zero. Since the Taylor coefficients of $x(t)$ are nonnegative, we have $u_j=x(-1)<x(1)=4(2-\sqrt 2)\approx 2.343$. Comparing the latter with \eqref{EqUjat-1} we obtain that $j=1$ and $x(s^2-1)=U_1(s)$. Further,
$$U_1'(0)=-\frac{\partial F}{\partial s}(u_1,0)/\frac{\partial F}{\partial u}(u_1,0)=\frac{\frac{\sqrt 2}{32}u_1^2-\frac{4+\sqrt 2}{8}u_1+\frac{\sqrt 2}{2}}{\frac{3}{256}u_1^2-\frac{3+2\sqrt 2}{32}u_1+\frac{1+2\sqrt 2}{8}}\approx -0.6.$$
In particular, $U_1'(0)\approx -0.6$.
We obtain that $x(t)=C_3+C_4\sqrt{1+t}+O(1+t)$ for $t$ near $-1$ with $C_3=6+2\sqrt 2-2\sqrt{11+2\sqrt 2}\approx 1.3911,C_4\approx -0.6$.

Now, using formulas \eqref{EqPUFUX} we obtain, that $P_{\Upsilon,1/2}(t)$ has near $\pm 1$ the asymptotical behavior similar to the asymptotical behavior of $x(t)$:
\begin{align}\label{EqPUpsilonAsymptotics}\begin{split}
P_{\Upsilon,1/2}(t)=\widetilde C_1+\widetilde C_2\sqrt{1-t}+O(1-t)\;\;\text{near}
\;\;t=1,\\ \;\;P_{\Upsilon,1/2}(t)=\widetilde C_3+\widetilde C_4\sqrt{1+t}+O(1+t)
\;\;\text{near}\;\;t=-1\end{split}\end{align} for some constants $\widetilde C_j\in\mathbb R$, $1\leqslant j\leqslant 4$. Further, let $P_{\Upsilon,1/2}(t)=\sum\limits_{n=0}^\infty p_nt^n$. The function $$G(s)=\tfrac{1}{2}(P_{\Upsilon,1/2}(\sqrt s)+P_{\Upsilon,1/2}(-\sqrt s))=\sum\limits_{n=0}^\infty p_{2n}s^n$$ is analytic in a region of the form $D_\epsilon=\{s\in\mathbb C:|s|<1+\epsilon\}\setminus [1,1+\epsilon)$ for some $\epsilon>0$ and has the following asymptotical behavior near $s=1$:
$$G(s)=\tfrac{1}{2}(\widetilde C_1+\widetilde C_3)+\tfrac{1}{2}(\widetilde C_2+\widetilde C_4)\sqrt{1-s}+O(1-s).$$
Using results of Flajolet-Odlyzko \cite{FlajoletOdlyzko-Singularity-90} (see also Theorem of Section 16.8 in \cite{Woess-Asymptotic}) we obtain that
$p_{2n}=-\tfrac{1}{2}(\widetilde C_2+\widetilde C_4)n^{-3/2}+O(n^{-2})$ when $n\to\infty$. Similarly, we obtain that
$p_{2n+1}=-\tfrac{1}{2}(\widetilde C_2-\widetilde C_4)n^{-3/2}+O(n^{-2})$ when $n\to\infty$. This finishes the proof of Theorem \ref{ThAsympMoments}.

From the asymptotics \eqref{EqPUpsilonAsymptotics} of $P_{\Upsilon,1/2}(t)$ near $t=1$ and \eqref{EqSmuPmu} we obtain:
$$S_{\mu_{\Upsilon,1/2}}(z)=-\widetilde C_1-\widetilde C_2\sqrt{z-1}+O(z-1)$$ near $z=1$. Notice that \eqref{EqPUpsilonAsymptotics} and the fact that $P_{\Upsilon,1/2}(t)$ is increasing on $[0,1)$ implies that $\widetilde C_2<0$. Using Theorem \ref{ThStilt} we get
$$\frac{\dd\mu_{\Upsilon,1/2}(z) }{\dd\lambda(z)}=-\tfrac{1}{\pi}\widetilde C_2\sqrt{1-z}+O(1-z)$$ for $z<1$ near $z=1$. Integrating we arrive at:
$$\mu_{\Upsilon,1/2}(1-z,1]=-\tfrac{2}{3\pi}\widetilde C_2(1-z)^{\frac{3}{2}}+O(1-z)^2,$$ which finishes the proof of Proposition \ref{PropIntDens}.

\section{Inclusion of spectra and dependence of spectral measure $\mu_v$ on $v$.}\label{SectionMuvH}
Here we study the relation between the spectrum of the Laplace type operator and the supports of the spectral measures for a certain class of graphs including Schreier graphs of the action of $F$ on $[0,1]$.

Observe that for the simple random walk on a Cayley graph of a finitely generated countable group spectral measures corresponding to different vertices coincide and their support is equal to the spectrum of the Markov operator (see \cite{Kesten-RandomWalk-59}). However, for Laplace type operators on general graphs (or even on Schreier graphs) this is not always true. Below we provide two examples.

Let $\Gamma$ be a finite weighted graph with real symmetric weight. Consider the corresponding Laplace-type operator $L=H_\Gamma$. One can find the spectral measure $\mu_v=\mu_{\Gamma,v}$ corresponding to a vertex $v$ as follows. Since $L$ is self-adjoint, it has a spectral decomposition. Let $\lambda_i,\xi_i,i=1,\ldots ,|\Gamma|,$ be the eigenvalues and the eigenvectors of $L$, where $|\Gamma|$ is the number of vertices of $\Gamma$. We normalize $\xi_i$ to have $\|\xi_i\|=1$, $i=1,\ldots,|\Gamma|$. Let
$$\delta_v=\sum\alpha_i\xi_i$$ be the decomposition of $\delta_v$ over the basis $\{\xi_i\}$ of $l^2(\Gamma)$. Then the  spectral measure  $\mu_v$ is supported on the set $\{\lambda_i:1\leqslant i\leqslant n,\,\alpha_i\neq 0\}$ and \begin{equation}\label{Eqmuvdelta}\mu_v(\{\lambda_i\})=|\alpha_i|^2=
|(\delta_v,\xi_i)|^2\;\; \text{for}\;\;1\leqslant i\leqslant n.\end{equation}
\begin{Ex}\label{Ex5vertices} Let $\Gamma$ be the undirected graph with the vertex set $V=\{-2,-1,0,1,2\}$ and the edge set $E=\{(-2,-1),(-1,0),(0,1),(1,2)\}$. Assign weight $1$ to each edge. Then the corresponding Laplace-type operator can be written as a $5\times 5$ matrix with $1$ at each entry directly above or below the diagonal and $0$ every where else. The eigenvalues and the eigenvectors are:
\begin{align*}\lambda_1=0,\;\xi_1=\tfrac{1}{\sqrt 3}(1,0,-1,0,1),\;\lambda_2=1,
\;\xi_2=\tfrac{1}{\sqrt 3}(-1,-1,0,1,1),\\
\lambda_3=-1,\;\xi_3=\tfrac{1}{2}(-1,1,0,-1,1),\; \lambda_4=\sqrt{3},\;\xi_4=\tfrac{1}{2\sqrt 3}(1,\sqrt 3,2,\sqrt 3,1),\\ \lambda_5=-\sqrt{3},\;\xi_5=\tfrac{1}{2\sqrt 3}(1,-\sqrt{3},2,-\sqrt{3},1).\end{align*}
From \eqref{Eqmuvdelta} we obtain \begin{align*}\supp(\mu_{-2})=\supp(\mu_2)=\{\lambda_i:1\leqslant i\leqslant 5\},\\ \supp(\mu_{-1})=\supp(\mu_1)=\{\lambda_i:2\leqslant i\leqslant 5\},\;\supp(\mu_0)=\{\lambda_1,\lambda_4,\lambda_5\}.\end{align*}
\end{Ex}
\begin{figure}\centering\includegraphics[width=0.5\linewidth]{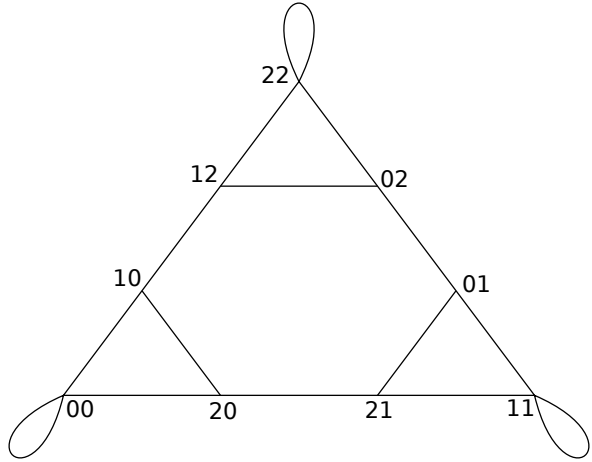}
\caption{The Schreier graph of the action of $H^{(3)}$ on $V_2$.}\label{FigHanoi} \end{figure}
\begin{Rem} As Pierre de la Harpe pointed to the authors, the graph with three vertices $V=\{-1,0,1\}$ and the edge set $E=\{\{-1,0\},\{0,1\}\}$ also has this property, that for some vertex (namely, the vertex $0$) the support of the spectral measure is a strict subset of the spectrum of the Markov operator.
\end{Rem}
\begin{Ex} Consider the Schreier graph $\Gamma$ of the action of the Hanoi Towers group $H^{(3)}$ on the second level $V_2\simeq \{0,1,2\}^2$ of the ternary rooted tree $T_3$ (see \cite{GrigorchukSunic-Hanoi-08}). This Schreier graph is shown on Figure \ref{FigHanoi}. Let $M$ be the Markov operator corresponding to the simple random walk on $\Gamma$.
One can show that $-2/3$ is a simple eigenvalue of $M$ with the eigenvector given by:
 $$\alpha_{00}=\alpha_{11}=\alpha_{22}=0,\;\;\alpha_{10}=\alpha_{02}=\alpha_{21}=1,\;\;
 \alpha_{20}=\alpha_{12}=\alpha_{01}=-1.$$
This eigenvector is orthogonal to $\delta_{00}$ but not orthogonal to $\delta_{10}$. Thus, by \eqref{Eqmuvdelta} $-2/3$ is contained in the support of $\mu_{10}$ but not in the support of $\mu_{00}$.
\end{Ex}

\begin{Def} Let us call a connected graph $\Gamma$ \emph{tree-like} if for every pair of adjacent vertices $v\neq w$ the graph obtained by removing from $\Gamma$ all the edges joining $v$ and $w$ is disconnected. \end{Def}
\noindent Note that a connected graph is tree-like if and only if after removing all loops and replacing multiple edges by single edges it becomes a tree. It is not hard to see that the Schreier graphs of the action of $F$ on $[0,1]$ are tree-like.

\begin{Lm}\label{LmAtomPoleEigenvalue} Let $\Gamma$ be a weighted graph of bounded degree with real symmetric uniformly bounded weight. Let $a\in\mathbb R$ and let $v$ be a vertex of $\,\Gamma$. Then $a$ is an isolated atom of $\mu_{\Gamma,v}$ if and only if
 $a$ is a real pole of $S_{\mu_{\Gamma,v}}$ (per Definition \ref{DefRealPole}). Moreover, if the above equivalent conditions hold then $a$ is an eigenvalue of $H_\Gamma$.
\end{Lm}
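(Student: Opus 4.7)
The plan is to prove both directions by a local decomposition of $\mu_{\Gamma,v}$ near $a$, and then obtain the eigenvalue claim from the spectral theorem.

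For the forward direction, I would assume $a$ is an isolated atom and pick a neighborhood $U$ of $a$ with $\supp(\mu_{\Gamma,v}) \cap U = \{a\}$. Writing $\mu_{\Gamma,v} = c\delta_a + \nu$ with $c = \mu_{\Gamma,v}(\{a\}) > 0$ and $\nu$ supported in $\mathbb{R} \setminus U$, the Cauchy-Stieltjes transform splits as $S_{\mu_{\Gamma,v}}(z) = c/(a-z) + S_\nu(z)$. Since $\nu$ has support disjoint from $U$, the function $S_\nu$ is analytic on $U$ and real on $U \cap \mathbb{R}$; thus $S_{\mu_{\Gamma,v}}$ admits the continuation required by Definition \ref{DefRealPole} with a simple pole of positive residue $c$ at $a$, which is exactly the assertion that $a$ is a real pole.

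For the reverse direction, by Remark \ref{RemConj} the pole is simple, so on $U \setminus \{a\}$ one can write $S_{\mu_{\Gamma,v}}(z) = c_0/(a-z) + \psi(z)$ with $\psi$ analytic on $U$. The real-pole hypothesis forces $c_0 \in \mathbb{R}$ and $\psi$ real on $U \cap \mathbb{R}$. From \eqref{EqSmuHalfplanes} the expression $\epsilon\cdot \im S_{\mu_{\Gamma,v}}(a + \I\epsilon) = c_0 + O(\epsilon)$ must be nonnegative, and nonzero since the pole is present, so $c_0 > 0$. The Stieltjes-Perron inversion formula then identifies $\mu_{\Gamma,v}(\{a\}) = \lim_{\epsilon \to 0^+} \epsilon \cdot \im S_{\mu_{\Gamma,v}}(a+\I\epsilon) = c_0$, so $a$ is an atom. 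For isolation, observe that $S_{\mu_{\Gamma,v}}$ extends to an analytic function on $U \setminus \{a\}$ that is real on $(U \cap \mathbb{R}) \setminus \{a\}$; hence $\im S_{\mu_{\Gamma,v}}(z) \to 0$ as $z$ approaches any other $z_0 \in U \cap \mathbb{R}$ from $\mathbb{H}_+$, and Corollary \ref{CoSuppMu} excludes such $z_0$ from $\supp(\mu_{\Gamma,v})$.

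The moreover clause is immediate from the spectral theorem: $\mu_{\Gamma,v}(\{a\}) > 0$ means $(E(\{a\})\delta_v, \delta_v) > 0$, so the spectral projection $E(\{a\})$ is nonzero, and any nonzero vector in its range is an eigenvector of $H_\Gamma$ with eigenvalue $a$. The main technical point is the identification $c_0 = \mu_{\Gamma,v}(\{a\})$; this is the one step that invokes a classical tool (Stieltjes-Perron inversion) rather than the graph-specific machinery developed earlier, but its hypotheses (finite positive Borel measure on a compact interval) are clearly satisfied, and an alternative route via an explicit passage to the limit in $\int \epsilon^2/((y-a)^2+\epsilon^2)\,\dd\mu_{\Gamma,v}(y)$ is available if one prefers to avoid citing the formula.
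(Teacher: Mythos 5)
Your proof is correct, and its skeleton matches the paper's: the forward direction via splitting off $c\,\delta_a$ from $\mu_{\Gamma,v}$, the eigenvalue claim via the spectral projection $E_{\{a\}}$, and the isolation in the reverse direction via Corollary \ref{CoSuppMu}. The one place you diverge is in showing that $a$ is actually an atom: you compute $\mu_{\Gamma,v}(\{a\})$ as the residue $c_0$ by the Stieltjes--Perron inversion formula (after using Remark \ref{RemConj} and \eqref{EqSmuHalfplanes} to see that the pole is simple with positive residue). The paper avoids this entirely: since Definition \ref{DefRealPole} already places a real pole at a point of $\supp(\mu_{\Gamma,v})$ (equivalently, a pole cannot occur where $S_{\mu_{\Gamma,v}}$ is analytic, i.e.\ off the support), every neighborhood $U$ of $a$ has $\mu_{\Gamma,v}(U)>0$, while Corollary \ref{CoSuppMu} gives $\mu_{\Gamma,v}(U\setminus\{a\})=0$ for $U$ small, so $\mu_{\Gamma,v}(\{a\})=\mu_{\Gamma,v}(U)>0$ with no residue computation. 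Your route costs one extra classical input but buys the additional identity $\mu_{\Gamma,v}(\{a\})=c_0$, which the lemma does not ask for; both arguments are complete.
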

\begin{proof}  Assume that $a$ is an isolated atom of $\mu:=\mu_{\Gamma,v}$. Using the formula for $S_\mu$:
$$S_{\mu}(z)=\int\limits_{\mathbb R} \frac{\dd\mu(y)}{y-z}$$ (see Section \ref{SubsecStiltjes} for details) we immediately obtain that $a$ is a real pole of $S_{\mu}$.
 Let $E_{\{a\}}$ be the spectral orthogonal projection of $H_\Gamma$ corresponding to the one-point set $\{a\}$. Then $(E_{\{a\}}\delta_v,\delta_v)=\mu(\{a\})\neq 0$ and so $E_{\{a\}}\delta_v\neq 0$. One has $H_\Gamma E_{\{a\}}\delta_v=aE_{\{a\}}\delta_v$. Thus, $a$ is an eigenvalue of $H_\Gamma$.

Now assume that $a$ is a real pole of $S_{\mu}$. By Corollary \ref{CoSuppMu} and Remark \ref{RemConj}, $a\in\supp(\mu)$ and there is an open neighborhood $U\subset \mathbb R$ of $a$ such that $\mu(U\setminus\{a\})=0$. Thus, $a$ is an isolated atom of $\mu$.
\end{proof}
\noindent Note that, as example \ref{Ex5vertices} shows, it is possible even for a tree-like graph that $H_\Gamma$ has an eigenvalue $a$ which does not belong to $\supp(\mu_{\Gamma,v})$ for some vertex $v$.

The next statement shows that for the graphs under considerations supports of the measures $\mu_{\Gamma,v}$ for different vertices $v$ may differ only by isolated points representing eigenvalues. Let $A\bigtriangleup B$ stand for the symmetric difference of sets $A,B$.
\begin{Prop}\label{PropSuppMuEigenvectors} Let $\Gamma$ be a connected tree-like weighted graph of bounded degree with real symmetric uniformly bounded weights and $v,w$ be two vertices of $\Gamma$. Then $\supp(\mu_{\Gamma,v})\triangle\supp(\mu_{\Gamma,w})$ is a subset of the set of eigenvalues of $H_\Gamma$.
\end{Prop}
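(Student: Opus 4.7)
My plan is to show that for adjacent $v,w$ the supports $\supp(\mu_{\Gamma,v})$ and $\supp(\mu_{\Gamma,w})$ can differ only at isolated atoms; by Lemma \ref{LmAtomPoleEigenvalue} any such atom is an eigenvalue of $H_\Gamma$. The general case reduces to this one by choosing a path $v=v_0,v_1,\ldots,v_n=w$ in the connected graph $\Gamma$ and iterating the elementary inclusion $A\triangle C\subset(A\triangle B)\cup(B\triangle C)$ along consecutive vertices.

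Assume therefore that $v$ and $w$ are adjacent. The tree-like hypothesis means that removing the set $E_{vw}$ of edges joining $v$ and $w$ disconnects $\Gamma$ into two pieces $\Gamma_v'$ (containing $v$) and $\Gamma_w'$ (containing $w$). Set $\alpha=\sum_{e\in E_{vw}}\omega_{(v,w)_e}$, $K=\alpha^2$, and let $\phi=S_{\mu_{\Gamma,v}}$, $\psi=S_{\mu_{\Gamma,w}}$, $\phi'=S_{\mu_{\Gamma_v',v}}$, $\psi'=S_{\mu_{\Gamma_w',w}}$. A first-return decomposition in the spirit of Lemma \ref{LmGraphSum} (every non-trivial first-return path at $v$ in $\Gamma$ is either an $E_{vw}$-free first-return in $\Gamma_v'$ or starts with an edge from $v$ to $w$, continues along an arbitrary $w$-to-$w$ path in $\Gamma_w'$, and ends on an edge back) yields $P_{\Gamma,v}=P_{\Gamma_v',v}/(1-Kt^2 P_{\Gamma_v',v}P_{\Gamma_w',w})$ and the symmetric formula for $P_{\Gamma,w}$. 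After the substitution $t=z^{-1}$ relating generating series to Stieltjes transforms, these become the identities
\[
\tfrac{1}{\phi}=\tfrac{1}{\phi'}-K\psi',\qquad \tfrac{1}{\psi}=\tfrac{1}{\psi'}-K\phi'
\]
on $\mathbb H_+$. When $\alpha=0$ the operator $H_\Gamma$ decouples on $l^2(\Gamma_v')\oplus l^2(\Gamma_w')$ and the statement is immediate, so I may assume $K>0$.

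Now suppose $z_0\in\supp(\mu_{\Gamma,v})\setminus\supp(\mu_{\Gamma,w})$. By Corollary \ref{CoSuppMu}, $\psi$ extends analytically from $\mathbb H_+$ to some open disc $U\ni z_0$ in $\mathbb C$ and is real on $U\cap\mathbb R$. Taking imaginary parts of the second identity above on $\mathbb H_+$ yields
\[
\frac{\im\psi}{|\psi|^2}=\frac{\im\psi'}{|\psi'|^2}+K\,\im\phi',
\]
a sum of three non-negative quantities. For any $s\in U\cap\mathbb R$ with $\psi(s)\neq 0$ the left-hand side tends to $0$ as $z\to s$ from $\mathbb H_+$, forcing $\im\phi'\to 0$ at $s$. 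Since the zeros of the analytic function $\psi$ are isolated in $U$, Corollary \ref{CoSuppMu} then gives that $\supp(\mu_{\Gamma_v',v})\cap U$ is a discrete set, so $\phi'$ is meromorphic on $U$ with real boundary values on $U\cap\mathbb R$ off its poles. The identity $1/\psi'=1/\psi+K\phi'$ next realises $1/\psi'$, and hence $\psi'$ itself, as meromorphic on $U$ with the same real-boundary property. Finally, $1/\phi=1/\phi'-K\psi'$ makes $\phi$ meromorphic on $U$ and real on $U\cap\mathbb R$ off its isolated poles.

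Consequently $\im\phi=0$ on $U\cap\mathbb R$ outside a discrete set (the poles of $\phi$), so Corollary \ref{CoSuppMu} forces $U\cap\supp(\mu_{\Gamma,v})$ to be discrete and $z_0$ to be an isolated atom of $\mu_{\Gamma,v}$; Lemma \ref{LmAtomPoleEigenvalue} then identifies $z_0$ as an eigenvalue of $H_\Gamma$. The main obstacle is the asymmetric role of $\phi'$ and $\psi'$: the imaginary-part identity yields $\im\phi'\to 0$ directly, but for $\psi'$ one only extracts $\im\psi'/|\psi'|^2\to 0$, which is compatible with $\psi'$ having real poles. This is exactly why the argument must propagate \emph{meromorphy} (not analyticity) via the reciprocal identities $1/\psi'=1/\psi+K\phi'$ and $1/\phi=1/\phi'-K\psi'$, rather than working directly with the original products $\phi'\psi'$.
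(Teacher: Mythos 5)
Your proof is correct and follows essentially the same route as the paper's: reduce to adjacent vertices along a path, cut the tree-like graph along the connecting edge(s) with total weight $\alpha$, derive the two reciprocal Stieltjes identities $1/\phi=1/\phi'-K\psi'$ and $1/\psi=1/\psi'-K\phi'$ from the first-return decomposition, and propagate real-analyticity/meromorphy through the chain $\psi\Rightarrow\phi'\Rightarrow\psi'\Rightarrow\phi$ so that the exceptional points are real simple poles, hence isolated atoms, hence eigenvalues; the paper runs the mirror chain (starting from an interval avoiding $\supp(\mu_{\Gamma,v})$ and ending at $S_{\mu_{\Gamma,w}}$), which is logically the same argument. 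One caveat: your aside that the case $K=0$ is ``immediate'' is not right --- if the weights across the cut sum to zero then $H_\Gamma$ decouples as a direct sum and the two supports can differ by a whole interval containing no eigenvalues (e.g.\ a weight-one half-line on one side and a single vertex with a loop on the other), so the claim genuinely requires $K\neq 0$; the paper's proof silently makes the same nonvanishing assumption when it divides by $\alpha_{(v,w)}^2$, so this is a shared blind spot of the statement rather than a defect of your argument relative to the paper's.
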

\begin{proof} Clearly, it is sufficient to prove the statement under the assumption that $v,w$ are two adjacent vertices of $\Gamma$. In addition, if $v$ and $w$ are connected by more than one edge we will replace these edges by one edge with the weight $\alpha_{(v,w)}$ equal to the sum of the weights between $v$ and $w$. The latter does not change either operator $H_\Gamma$ or measures $\mu_{\Gamma,v}$, $\mu_{\Gamma,w}$. Thus, we reduce the general case to the case with $v$ and $w$ connected by exactly one edge.

Let $\Gamma^v$ and $\Gamma^w$ be the connected components of the graph obtained by removing from $\Gamma$ the edge connecting $v$ and $w$. Recall that for a graph $\Omega$ and a vertex $u$ of $\Omega$ the notation $\Omega_u$ means the corresponding rooted graph with the root at $u$. One has:
 $$\Gamma_v=\Gamma_v^v\cup(\delta+\Gamma_w^w),\;\;\Gamma_w=(\delta+\Gamma_v^v)\cup\Gamma_w^w.$$
 Using Lemma \ref{LmGraphSum} we obtain:
\begin{equation}\label{EqFGammavvGammaww} F_{\Gamma,v}(t)=F_{\Gamma^v,v}(t)+\alpha_{(v,w)}^2t^2P_{\Gamma^w,w}(t),\;\;
 F_{\Gamma,w}(t)=F_{\Gamma^w,w}(t)+\alpha_{(v,w)}^2t^2P_{\Gamma^v,v}(t).\end{equation} Using Lemma \ref{LmPF} and \eqref{EqSmuPmu} setting $z=t^{-1}$ we arrive at the following equations:
 \begin{equation}\label{EqSmuGammavw}\frac{1}{S_{\mu_{\Gamma,v}}(z)}=
 \frac{1}{S_{\mu_{\Gamma^v,v}}(z)}-\alpha^2_{(v,w)}S_{\mu_{\Gamma^w,w}}(z),\;\;
 \frac{1}{S_{\mu_{\Gamma,w}}(z)}=
 \frac{1}{S_{\mu_{\Gamma^w,w}}(z)}-\alpha^2_{(v,w)}S_{\mu_{\Gamma^v,v}}(z),
 \end{equation} whenever all terms of the corresponding equation are defined. Let $I\subset \mathbb R$ be an open segment which does not intersect $\supp(\mu_{\Gamma,v})$. By definition of Cauchy-Stieltjes transform (see Subsection \ref{SubsecStiltjes}) the function $S_{\mu_{\Gamma,v}}(z)$ is real analytic on $I$ (\ie is analytic and has real values on $I$). In particular, the set $Z=\{z\in I:S_{\mu_{\Gamma,v}}(z)=0\}$ does not have any limit points in $I$.

 Further, for every $z_0\in I\setminus Z$ one has $$\lim\limits_{z\to z_0}\im\frac{1}{S_{\mu_{\Gamma,v}}(z)}=0.$$ Taking into account \eqref{EqSmuHalfplanes} from the first equation of \eqref{EqSmuGammavw} we obtain:
 $$\lim\limits_{z\to z_0,\im z>0}\im S_{\mu_{\Gamma^w,w}}(z)=0.$$ By Corollary \ref{CoSuppMu}, the latter implies that $I\setminus Z$ does not intersect $\supp(\mu_{\Gamma^w,w})$. It follows that $S_{\mu_{\Gamma^w,w}}(z)$ is real analytic on $I\setminus Z$. Therefore, the function $\frac{1}{S_{\mu_{\Gamma,v}}(z)}+\alpha_{(v,w)}^2S_{\mu_{\Gamma^w,w}}(z)$ is real analytic on $I\setminus Z$. By Remark \ref{RemConj} each of the points $z\in Z$ is either a removable singularity or a simple pole of $S_{\mu_{\Gamma^w,w}}$. We conclude that the set $$Z_1=\left\{z\in I:\frac{1}{S_{\mu_{\Gamma,v}}(z)}+\alpha_{(v,w)}^2S_{\mu_{\Gamma^w,w}}(z)=0,\;\;\text{or}\;\;S_{\mu_{\Gamma,v}}(z)=0,\;\;\text{or}\;\;S_{\mu_{\Gamma^w,w}}(z)=0\right\}$$ does not have any limit points in $I$.

 Now, using the first equation of \eqref{EqSmuGammavw} we obtain that $S_{\mu_{\Gamma^v,v}}(z)$ is nonzero and real analytic on $I\setminus Z_1$. Using the second equation of \eqref{EqSmuGammavw} we obtain that $\frac{1}{S_{\mu_{\Gamma,w}}(z)}$ is real analytic on $I\setminus Z_1$, and so $S_{\mu_{\Gamma,w}}(z)$ is real analytic on $I\setminus Z_1$ except possibly a set of poles without a limit point in $I$. By Remark \ref{RemConj} each of the points $z\in Z_1$ is either a point of analyticity or a simple pole of $S_{\mu_{\Gamma,w}}$. It follows that the set $Z_2$ of poles of $S_{\mu_{\Gamma,w}}$ in $I$ does not have limit points in $I$. From Corollary \ref{CoSuppMu} we obtain that $\supp(\mu_{\Gamma,w})\cap (I\setminus Z_2)=\varnothing$. Using Lemma \ref{LmAtomPoleEigenvalue} we derive that $Z_2$ is a subset of the set of eigenvalues of $H_\Gamma$.

 Thus, we obtain that $\supp(\mu_{\Gamma,w})\setminus \supp(\mu_{\Gamma,v})$ is a subset of the set of eigenvalues of $H_\Gamma$. Observing that the choice of the vertices $v$ and $w$ is symmetric, we finish the proof.
\end{proof}
The following folklore statement is a consequence of the application of Spectral Theorem to self-adjoint Laplace type operators.
\begin{Prop}\label{PropSpectrumUnion} Let $\Gamma$ be a weighted graph of bounded degree with real symmetric uniformly bounded weights. Then
$$\sigma(H_\Gamma)=\overline{\bigcup\limits_{v\in \Gamma}\supp(\mu_v)}.$$
\end{Prop}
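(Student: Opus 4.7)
The strategy is to use the projection-valued spectral measure $E$ of the bounded self-adjoint operator $H_\Gamma$ together with the fact that $\{\delta_v : v \in \Gamma\}$ is an orthonormal basis of $l^2(\Gamma)$, and to establish the two inclusions of the claimed equality separately. The argument requires no information about $\Gamma$ beyond what is already assumed to make $H_\Gamma$ a bounded self-adjoint operator on a separable Hilbert space, so it is a purely abstract piece of spectral theory.

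For the inclusion $\overline{\bigcup_v \supp(\mu_v)} \subset \sigma(H_\Gamma)$, the plan is to recall that $E$ is supported on $\sigma(H_\Gamma)$, so $E(B) = 0$ for every Borel set $B \subset \mathbb R \setminus \sigma(H_\Gamma)$. By the definition $\mu_v(B) = (E(B)\delta_v, \delta_v)$, this forces $\mu_v(\mathbb R \setminus \sigma(H_\Gamma)) = 0$, hence $\supp(\mu_v) \subset \sigma(H_\Gamma)$ for every vertex $v$. Taking the union over $v$ and then the closure (using that $\sigma(H_\Gamma)$ is closed) gives this inclusion.

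For the reverse inclusion $\sigma(H_\Gamma) \subset \overline{\bigcup_v \supp(\mu_v)}$, I would fix $\lambda_0 \in \sigma(H_\Gamma)$ and show that every open neighborhood $U$ of $\lambda_0$ meets $\supp(\mu_v)$ for at least one $v$. The key point is that $E(U) \neq 0$: otherwise $H_\Gamma - \lambda\,\id$ would be invertible for all $\lambda$ in a neighborhood of $\lambda_0$, contradicting $\lambda_0 \in \sigma(H_\Gamma)$. Since $E(U)$ is a nonzero orthogonal projection on $l^2(\Gamma)$ and $\{\delta_v\}_{v \in \Gamma}$ is an orthonormal basis, there must exist some $v$ with $E(U)\delta_v \neq 0$; otherwise $E(U)$ would vanish on a dense subspace and hence be the zero operator. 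This yields $\mu_v(U) = \|E(U)\delta_v\|^2 > 0$, so $U \cap \supp(\mu_v) \neq \varnothing$. Letting $U$ shrink to $\{\lambda_0\}$ places $\lambda_0$ in $\overline{\bigcup_v \supp(\mu_v)}$.

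The only real subtlety — hardly an obstacle — is the step in the second inclusion where nonvanishing of the projection $E(U)$ is converted to nonvanishing of $(E(U)\delta_v, \delta_v)$ for some $v$; this is precisely where the orthonormal basis property of the delta functions is used. All other steps are direct consequences of the Spectral Theorem, and no analytic or combinatorial property of the graph $\Gamma$ enters the argument.
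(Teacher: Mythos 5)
Your proof is correct and follows essentially the same route as the paper: both directions rest on the equivalence between $\lambda\in\sigma(H_\Gamma)$ and the nonvanishing of the spectral projection $E(I)$ for every open interval $I\ni\lambda$, combined with the fact that $E(I)\neq 0$ iff $(E(I)\delta_v,\delta_v)\neq 0$ for some $v$, since the $\delta_v$ form an orthonormal basis. The only cosmetic difference is that you phrase the easy inclusion via $\supp(\mu_v)\subset\sigma(H_\Gamma)$ rather than running the projection argument in reverse; the substance is identical.
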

\begin{proof} Let $\lambda\in \sigma(H_\Gamma)$. Then for any open interval $I$ containing $\lambda$ the corresponding spectral orthogonal projection $E_I$ of $H_\Gamma$ is nonzero, and so there exists a vertex $v$ such that $\mu_v(I)=(E_I\delta_v,\delta_v)\neq 0$. This means that $I\cap\supp(\mu_v)\neq\varnothing$. We obtain that $\lambda\in \overline{\bigcup\limits_{v\in \Gamma}\supp(\mu_v)}.$

On the other hand, if $\lambda\in\overline{\bigcup\limits_{v\in \Gamma}\supp(\mu_v)}$ then for any open interval $I$ containing $\lambda$ there exists a vertex $v$ of $\Gamma$ such that $\mu_v(I)\neq 0$, and so $E_I\neq 0$. This implies that $\lambda\in \sigma(H_\Gamma)$.
\end{proof}
Recall that $\Upsilon$ is the Schreier graph of the action of the Thompson group $F$ on the orbit of $1/2$ (see Figure \ref{FigSchreierThompson}) and $\Delta$ is the subgraph of $\Upsilon$ defined in Section \ref{SubsecWeightedSchreier} (see Figure \ref{FigSchreierThompsonSelfsim}).
\begin{Prop} \label{PropNoEigenvalues} Let $\alpha,\beta\in\mathbb R\setminus \{0\}$ be of the same sign. Then the operators $H_\Upsilon$ and $H_\Delta$ have no eigenvalues.
\end{Prop}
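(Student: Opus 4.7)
The plan is to argue by contradiction, combining absolute continuity of the spectral measure at a distinguished vertex with the self-similar structure of the graph in order to propagate $\xi\equiv 0$. I treat $H_\Delta$ first and then reduce the case of $H_\Upsilon$ to it.

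\emph{Case of $H_\Delta$.} Suppose $\xi\in l^2(\Delta)$ is a nonzero eigenvector with $H_\Delta\xi=a\xi$, $a\in\mathbb R$. By Corollary \ref{CoKestenHDelta}, $\mu_{\Delta,v}$ is absolutely continuous, so $\delta_v$ lies in the absolutely continuous subspace of $H_\Delta$ and is orthogonal to $\ker(H_\Delta-a)$; hence $\xi(v)=0$. Restricting the eigenvalue equation to the outer $\Phi$-piece yields, for $i\geqslant 2$, the Jacobi recursion $\alpha\xi(i+1)-(a-\beta)\xi(i)+\alpha\xi(i-1)=0$ with Dirichlet boundary $\xi(1)=\xi(v)=0$. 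Since the characteristic roots satisfy $r_+r_-=1$, a short case analysis (real distinct with $|r_+|>1>|r_-|$, complex conjugate on the unit circle, or double root) together with $\xi|_\Phi\in l^2$ forces $\xi\equiv 0$ on the $\Phi$-piece. An entirely parallel Jacobi analysis will give $\xi\equiv 0$ on the outer $\Psi$-piece once the Dirichlet condition $\xi(-1_\Psi)=0$ has been established below.

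The heart of the argument is propagation via self-similarity. By Lemma \ref{LmDeltaEq}, there is a weighted-graph isomorphism $\phi_1\colon\Delta\to\Delta^{(1)}$ onto the sub-$\Delta$ rooted at $v_1$, with $\phi_1(v)=v_1$. Setting $\tilde\xi^{(1)}(u):=\xi(\phi_1(u))$ gives an element of $l^2(\Delta)$. At every vertex $u\neq v$ one checks $(H_\Delta\tilde\xi^{(1)})(u)=a\tilde\xi^{(1)}(u)$, because $\phi_1$ is a weighted isomorphism onto an ``interior'' subgraph; at $u=v$, matching the eigenvalue equation for $\xi$ at $v_1$ in $\Delta$ (whose only external neighbor is $v$, via an edge of weight $\beta$) produces
\[
(H_\Delta-a)\tilde\xi^{(1)}=-\beta\,\xi(v)\,\delta_v=0,
\]
the last equality by the first step. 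So $\tilde\xi^{(1)}$ is itself an eigenvector of $H_\Delta$ at $a$, and the first step applied to it gives $\tilde\xi^{(1)}(v)=\xi(v_1)=0$. An analogous construction using $\phi_2\colon\Delta\to\Delta^{(2)}$ (sub-$\Delta$ at $v_2$) yields $\xi(v_2)=0$, this time after first using the eigenvalue equation at $v$ together with $\xi(v)=\xi(2_\Phi)=\xi(v_1)=0$ to deduce $\xi(-1_\Psi)=0$. Iterating the three ingredients---absolute continuity at the current root, Jacobi/$l^2$ analysis on the attached $\Phi$- and $\Psi$-pieces, pull-back through $\phi_1$ and $\phi_2$---level by level along the binary tree of sub-roots $\{v_I\}$, I obtain $\xi\equiv 0$ at every $v_I$ and on every $\Phi^{(I)}$- and $\Psi^{(I)}$-piece; these cover every vertex of $\Delta$, contradicting $\xi\ne 0$.

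\emph{Case of $H_\Upsilon$.} By Corollary \ref{PropGa1/2}, $\Upsilon=\tilde\Phi_1\star(\Psi_{-1}\star\Delta_v)$, and Theorem \ref{ThMain} supplies absolute continuity of $\mu_{m,1/2}$. For a putative eigenvector $\xi\in l^2(\Upsilon)$ at $a$ the same scheme applies: $\xi(1/2)=0$ from absolute continuity; the Jacobi analysis on $\tilde\Phi$ (the extra loop at vertex $1$ merely shifts the diagonal of the recursion by $\beta$) kills $\xi$ on $\tilde\Phi$; the eigenvalue equation at $1$ then forces $\xi(-1_\Psi)=0$, and the Jacobi analysis on the outer $\Psi$-piece kills $\xi$ there. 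At this stage every external contribution to any vertex of the embedded $\Delta$ has been shown to vanish, so $\xi|_\Delta\in l^2(\Delta)$ satisfies $H_\Delta(\xi|_\Delta)=a(\xi|_\Delta)$, and the already established case of $H_\Delta$ forces $\xi|_\Delta=0$, whence $\xi\equiv 0$ on $\Upsilon$. The main obstacle I foresee is the self-similarity step: one has to verify carefully that $\tilde\xi^{(1)}$ is an eigenvector of $H_\Delta$ on the \emph{whole} graph (not only on $\Delta^{(1)}$), which is what forces the crucial identity $(H_\Delta-a)\tilde\xi^{(1)}=-\beta\xi(v)\delta_v$ and makes essential use of $\xi(v)=0$; the remaining ingredients are standard $l^2$ spectral analysis of free Jacobi matrices plus bookkeeping along the binary tree.
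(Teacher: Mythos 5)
Your proof is correct, but it follows a genuinely different route from the paper's. The paper argues directly: assuming $\alpha=1$, $\beta>0$, it shows that at any vertex $u$ with $f(u)\neq 0$ the attached $\Phi$- or $\Psi$-tail forces $|\lambda|>2(1+\beta)$ and a geometric decay of $f$ along the tail, whence the eigenvalue equation at $u$ forces some neighbour $w$ off the tail to satisfy $|f(w)|>|f(u)|$; iterating produces a path along which $|f|$ strictly increases, contradicting $f\in l^2$. This is elementary and self-contained but leans essentially on the positivity of all weights (same sign of $\alpha,\beta$). You instead import the absolute-continuity results (Theorem \ref{ThMain} and Corollary \ref{CoKestenHDelta}) to get $\xi(v)=0$ at the root, kill the $\Phi$- and $\Psi$-tails by the transfer-matrix/Dirichlet argument (the product of the characteristic roots being $1$), and propagate the vanishing through the self-similar decomposition $\Delta_v=(\Phi_1\star\Delta_v)\star(\Psi_{-1}\star\Delta_v)$ via the key identity $(H_\Delta-a)\tilde\xi^{(1)}=-\beta\,\xi(v)\,\delta_v$, which I checked is valid because each sub-copy of $\Delta$ is attached to the rest by a single edge of weight $\beta$. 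There is no circularity, since Theorem \ref{ThMain} and Corollary \ref{CoKestenHDelta} are established independently and earlier. What your route buys is that it only needs $\alpha\neq 0$ and $\alpha+\beta\neq 0$ (so that neither recursion degenerates), not that $\alpha$ and $\beta$ have the same sign; what it costs is reliance on the heavier spectral-measure machinery and the bookkeeping along the binary tree of sub-roots. One small slip: the loop at a vertex $i>1$ of $\Phi$ contributes two directed edges, so the diagonal term in your Jacobi recursion should be $a-2\beta$, not $a-\beta$ (compare the paper's own recursion $f(j)+(2\beta-\lambda)f(j+1)+f(j+2)=0$); this does not affect your case analysis since the roots still multiply to $1$.
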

\begin{proof} Let us show that $H_\Upsilon$ has no eigenvalues. Dividing all weights of $H_\Upsilon$ by $\alpha$ we reduce the general case to the case $\alpha=1$. Assume that $H_\Upsilon$ has an eigenvalue $\lambda$. Note that $\lambda\in\mathbb R$ since $H_\Upsilon$ is self-adjoint. Let $f\in l^2(\Upsilon)$ be an eigenvector corresponding to $\lambda$. In particular, $f$ is not identically equal to zero. Let $u$ be a vertex of $\Upsilon$ such that $f(u)\neq 0$. The vertex $u$ has a copy of at least one of the graphs $\Phi$ and $\Psi$ attached to it (see Figure \ref{FigSchreierThompson}). In fact, it is not hard to see that both cases occur for vertices $u$ with $f(u)\neq 0$.

{\bf Case 1.} Assume that a copy of $\Psi$ is attached to $u$. We label the vertices of this copy of $\Psi$ by negative integers as in Subsection \ref{SubsecWeightedSchreier}. Then the values of $f$ satisfy the equation:
$$(1+\beta)f(-j)-\lambda f(-j-1)+(1+\beta)f(-j-2)=0,\;\;j\in\mathbb N.$$ The roots of the characteristic equation of this recursion are $$t_{1,2}=\frac{\lambda\pm\sqrt{\lambda^2-4(1+\beta)^2}}{2(1+\beta)}.$$ It follows that $$f(-j)=c_1t_1^j+c_2t_2^j,\;\;j\in\mathbb N, $$ where $c_1,c_2$ are constants. Note that $t_1t_2=1$. Moreover, if $|\lambda|\leqslant 2(1+\beta)$ then $|t_1|=|t_2|=1$. If $|\lambda|>2(1+\beta)$ then only one of the roots $$t_1:=\frac{\lambda-\sign(\lambda)\sqrt{\lambda^2-4(1+\beta)^2}}{2(1+\beta)}$$ has absolute value less than $1$. Since $f\in l^2(\Upsilon)$ we obtain that $|\lambda|>2(1+\beta)$ and $f(j)=c_1t_1^j$.

Further, the vertex $u=1$ has three neighbors: the vertex labeled by $2$ and two other vertices (possibly, coinciding) which we denote by $u_1$ and $u_2$ correspondingly. We assume that the edge connecting $u$ and $u_1$ has weight $1$ and the edge connecting $u$ and $u_2$ has weight $\beta$. One has:
$f(u_1)+\beta f(u_2)+(1+\beta)f(2)=\lambda f(1)$. Taking into account that $|\lambda|>2(1+\beta)$ and $|f(2)|<|f(1)|$ we obtain that $$\min\{|f(u_1)|,|f(u_2)|\}>|f(1)|. $$

{\bf Case 2.} Assume that a copy of $\Phi$ is attached to $u$. We label the vertices of this copy of $\Phi$ by positive integers as in Subsection \ref{SubsecWeightedSchreier}. Then the values of $f$ satisfy the equation:
$$f(j)+(2\beta-\lambda) f(j+1)+f(j+2)=0,\;\;j\in\mathbb N.$$ Solving the recurrent relation we obtain $$f(j)=c_1t_1^j+c_2t_2^j,\;\;t_{1,2}=\frac{\lambda-2\beta\pm\sqrt{(\lambda-2\beta)^2-4}}{2},
\;\;j\in\mathbb N,$$ where $c_1,c_2$ are constants. Observe that if $|\lambda-2\beta|\leqslant 2$ then $|t_1|=|t_2|=1$. If $|\lambda-2\beta|>2$ then only one of the roots $$t_1:=\frac{\lambda-2\beta-\sign(\lambda-2\beta)\sqrt{(\lambda-2\beta)^2-4}}{2}$$ has absolute value less than $1$. Since $f\in l^2(\Upsilon)$ we obtain that $|\lambda-2\beta|>2$ and $f(j)=c_1t_1^j$.

Further, let the neighbors of $u$ outside the copy of $\Phi$ be $u_1,u_2,u_3$ (some of them might coincide), where the edge between $u$ and $u_3$ has weight $1$. Then $$\beta f(u_1)+\beta f(u_2)+f(u_3)+f(2)=\lambda f(1).$$ Taking into account that $|\lambda|>2(1+\beta)$ (see case 1), $\beta>0$ and $|f(2)|<|f(1)|$ we obtain that $\min\{|f(u_1)|,|f(u_2)|,|f(u_3)|\}>|f(1)|$.

Thus, for every vertex $u$ such that $f(u)\neq 0$ there is an adjacent vertex $w$ such that $|f(w)|>|f(u)|$. This implies that $f\notin l^2(\Upsilon)$. This contradiction shows that $H_\Upsilon$ has no eigenvalues.

The case of operator $H_\Delta$ can be treated similarly. The only difference from the above arguments is that in case $2$ it is possible that $u=1$ have only two neighbors $u_1,u_2$ outside $\Phi$ (precisely when $u=v$, see Figure \ref{FigSchreierThompsonSelfsim}). Then $\beta f(u_1)+f(u_2)+f(2)=\lambda f(1)$ and we conclude that $\min\{|f(u_1)|,|f(u_2)|\}>|f(1)|$.
\end{proof}
For a graph $\Gamma$ and a vertex $v$ of $\Gamma$ let $B_n(v)$ stand for the ball of radius $n$ around $v$ in $\Gamma$. That is, $B_n(v)=(V_n(v),E_n(v))$, where $V_n(v)$ is the set of all vertices which can be reached by a path from $v$ of length at most $n$ in $\Gamma$, $E_n(v)$ is the set of all edges from $\Gamma$ with both endpoints in $V_n(v)$.
\begin{Def}\label{DefLocCont}For two uniformly bounded weighted graphs $\Gamma_1,\Gamma_2$ we will say that $\Gamma_1$ is locally contained in $\Gamma_2$ if for every vertex $v\in\Gamma_1$ and every $n\in\mathbb N$ there exists $w\in \Gamma_2$ such that $B_n(v)\subset \Gamma_1$ is isomorphic as a weighted graph to $B_n(w)\subset\Gamma_2$.\end{Def}

Let us recall a useful Lemma from \cite{DudkoGrigorchuk-Erratum-20}.
\begin{Lm}\label{LmEquivNorm} Let $A$ be any bounded nonzero linear operator on a Hilbert space and $R\geqslant 2\|A\|$. Then the following assertions are equivalent:
\begin{itemize} \item[$1)$] $\lambda\in \sigma(A)$,
\item[$2)$] $1\in \sigma(\mathrm{I}-
\tfrac{1}{R^2}(A-\lambda\mathrm{I})(A-\lambda\mathrm{I})^{*})\cup \sigma(\mathrm{I}-
\tfrac{1}{R^2}(A-\lambda\mathrm{I})^*(A-\lambda\mathrm{I})),$
\end{itemize}
where $\mathrm{I}$ is the identity operator.
\end{Lm}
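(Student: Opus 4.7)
The plan is to set $B=A-\lambda\mathrm{I}$ and reduce everything to two elementary observations. First, by the spectral mapping theorem applied to the polynomial $p(t)=1-t/R^{2}$, for any bounded operator $T$ one has $1\in\sigma(\mathrm{I}-T/R^{2})$ if and only if $0\in\sigma(T)$. Applying this separately to the positive operators $B^{*}B$ and $BB^{*}$, condition 2) is equivalent to the statement $0\in\sigma(B^{*}B)\cup\sigma(BB^{*})$. Hence the lemma reduces to the claim that $B$ fails to be invertible if and only if $0\in\sigma(B^{*}B)\cup\sigma(BB^{*})$.

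Next I would establish the standard fact: a bounded operator $B$ on a Hilbert space is invertible if and only if both $B^{*}B$ and $BB^{*}$ are invertible. The forward direction is immediate from the explicit formulas $(B^{*}B)^{-1}=B^{-1}(B^{*})^{-1}$ and $(BB^{*})^{-1}=(B^{*})^{-1}B^{-1}$. For the converse, if $B^{*}B$ is invertible then it is bounded below by some $\alpha>0$; since $\langle B^{*}Bx,x\rangle=\|Bx\|^{2}$, this forces $B$ itself to be bounded below, hence injective with closed range. Symmetrically, invertibility of $BB^{*}$ makes $B^{*}$ bounded below and therefore injective, so $B$ has dense range. Closed plus dense range is the full space, so $B$ is surjective, and together with injectivity this yields invertibility.

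Taking contrapositives: $B$ is not invertible if and only if $0\in\sigma(B^{*}B)\cup\sigma(BB^{*})$, which combined with the first paragraph finishes the proof. The hypothesis $R\geqslant 2\|A\|$ is not strictly needed for the equivalence itself; its purpose is normalization. Indeed, whenever $\lambda\in\sigma(A)$ one has $|\lambda|\leqslant\|A\|$, whence $\|B\|\leqslant 2\|A\|\leqslant R$, so that both $\mathrm{I}-R^{-2}B^{*}B$ and $\mathrm{I}-R^{-2}BB^{*}$ have norm at most one and spectrum contained in $[0,1]$. This is the form in which the lemma is convenient downstream, where one reduces spectral questions for the possibly non-self-adjoint $A$ to spectral questions for positive contractions (accessible via Kesten-type criteria).

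No step is genuinely difficult. The only subtlety worth highlighting is that invertibility of $B^{*}B$ alone does not imply invertibility of $B$ — the unilateral shift has $S^{*}S=\mathrm{I}$ invertible while $SS^{*}$ is a proper projection — so it is essential to include both operators on the right-hand side of condition 2).
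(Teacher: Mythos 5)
Your proof is correct and complete: the reduction via the (degree-one) spectral mapping theorem to the statement that $B=A-\lambda\mathrm{I}$ is non-invertible iff $0\in\sigma(B^*B)\cup\sigma(BB^*)$, together with the standard bounded-below/dense-range argument and the shift example justifying why both products are needed, is exactly the expected argument. Note that the paper itself gives no proof of this lemma --- it is quoted from the reference \cite{DudkoGrigorchuk-Erratum-20} --- so there is nothing to compare against; your remark that the hypothesis $R\geqslant 2\|A\|$ serves only to make $\mathrm{I}-R^{-2}BB^*$ and $\mathrm{I}-R^{-2}B^*B$ positive contractions (as used in Proposition \ref{PropLocCont}) is also accurate.
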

\noindent The following statement is a straightforward generalization of Proposition 11 from \cite{DudkoGrigorchuk-Spectrum-17} (which also appears in \cite{DudkoGrigorchuk-Erratum-20} as Proposition 1).
\begin{Prop}\label{PropLocCont} Let $\Gamma_1,\Gamma_2$ be two graphs of bounded degree with uniformly bounded weights such that $\Gamma_1$ is locally contained in $\Gamma_2$. Then $\sigma(H_{\Gamma_1})\subset\sigma(H_{\Gamma_2})$.
\end{Prop}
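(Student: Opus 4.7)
The plan is to combine Lemma \ref{LmEquivNorm} with the locality of the operators $H_{\Gamma_i}$: membership $\lambda\in\sigma(H_{\Gamma_1})$ will be encoded by the existence of finitely supported approximate null vectors for $(H_{\Gamma_1}-\lambda\mathrm{I})^*$ (or its adjoint partner), and I will copy such vectors from $\ell^2(\Gamma_1)$ to $\ell^2(\Gamma_2)$ using the isomorphisms guaranteed by Definition \ref{DefLocCont}, thereby producing approximate null vectors for the analogous operator on $\Gamma_2$ and obtaining $\lambda\in\sigma(H_{\Gamma_2})$.

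First I would fix $R\geqslant 2\max(\|H_{\Gamma_1}\|,\|H_{\Gamma_2}\|)$ and write $A_i=H_{\Gamma_i}-\lambda\mathrm{I}$ for $i=1,2$. If $\lambda\in\sigma(H_{\Gamma_1})$, Lemma \ref{LmEquivNorm} places $1$ in the spectrum of at least one of the positive contractions $\mathrm{I}-R^{-2}A_1A_1^*$ or $\mathrm{I}-R^{-2}A_1^*A_1$. Both are bounded between $0$ and $\mathrm{I}$, so $1$ lies in the spectrum if and only if the operator has norm $1$, which yields unit vectors $\xi_n\in\ell^2(\Gamma_1)$ with $\|A_1^*\xi_n\|\to 0$ in the first case (the second case is symmetric). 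Using density of finitely supported vectors together with boundedness of $A_1^*$, I would then replace each $\xi_n$ by a finitely supported unit vector $\tilde\xi_n$ still satisfying $\|A_1^*\tilde\xi_n\|\to 0$.

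The core is the transport step. Let $V_n\subset\Gamma_1$ be the support of $\tilde\xi_n$, pick any $v_n\in V_n$, and let $d_n$ be the diameter of $V_n$ in $\Gamma_1$. By the formula \eqref{EqHeckeGamma} defining $H_{\Gamma_1}$ and its adjoint, each coordinate $(A_1^*\tilde\xi_n)(u)$ is determined by the weights of directed edges incident to $u$ together with the values of $\tilde\xi_n$ at the endpoints of those edges; consequently $A_1^*\tilde\xi_n$ is supported in $B_{d_n+1}(v_n)$ and $\|A_1^*\tilde\xi_n\|^2$ depends only on the restriction of the weighted graph $\Gamma_1$ to $B_{d_n+2}(v_n)$ and on the values $\tilde\xi_n|_{V_n}$. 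Applying Definition \ref{DefLocCont} to the vertex $v_n$ with radius $d_n+2$ produces $w_n\in\Gamma_2$ and a weighted-graph isomorphism $\phi_n\colon B_{d_n+2}(v_n)\to B_{d_n+2}(w_n)$, which lets me define $\eta_n\in\ell^2(\Gamma_2)$ by $\eta_n(\phi_n(u))=\tilde\xi_n(u)$ for $u\in V_n$ and $\eta_n\equiv 0$ elsewhere. Then $\|\eta_n\|=1$ and the same local computation yields $\|A_2^*\eta_n\|=\|A_1^*\tilde\xi_n\|\to 0$. Hence $((\mathrm{I}-R^{-2}A_2A_2^*)\eta_n,\eta_n)\to 1$, so $1\in\sigma(\mathrm{I}-R^{-2}A_2A_2^*)$, and one more application of Lemma \ref{LmEquivNorm}, this time to $H_{\Gamma_2}$, gives $\lambda\in\sigma(H_{\Gamma_2})$.

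The main obstacle I anticipate is purely bookkeeping: pinning down precisely the radius (here $d_n+2$) needed so that the isomorphism $\phi_n$ transfers the value $\|A_i^*(\cdot)\|$ verbatim, and checking that the symmetric case $\|A_1\xi_n\|\to 0$ is handled by running the whole argument with $A_i$ and $A_i^*$ exchanged. Everything else reduces to the routine fact that $H_\Gamma$ propagates supports by at most one step at a time, which is immediate from \eqref{EqHeckeGamma}.
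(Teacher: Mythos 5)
Your argument is correct and follows essentially the same route as the paper: reduce via Lemma \ref{LmEquivNorm} to showing that a positive contraction built from $H_{\Gamma_2}-\lambda\mathrm{I}$ has norm $1$, and transfer finitely supported almost-maximizing vectors through the ball isomorphisms of Definition \ref{DefLocCont}. The only cosmetic difference is that the paper packages $\mathrm{I}-\tfrac{1}{R^2}(H_{\Gamma_i}-\lambda\mathrm{I})(H_{\Gamma_i}-\lambda\mathrm{I})^{*}$ as the Laplace operator of an auxiliary weighted graph $\widetilde\Gamma_i$ and transfers the quadratic form $(H_{\widetilde\Gamma_1}\eta,\eta)$, whereas you transfer approximate null vectors of $(H_{\Gamma_1}-\lambda\mathrm{I})^{*}$ directly, using the locality of \eqref{EqHeckeGamma}.
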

\begin{proof}
Let $\lambda\in\sigma(H_{\Gamma_1})$. Take $R>2\max\{\|H_{\Gamma_1}\|,\|H_{\Gamma_2}\|\}$.
Using Lemma \ref{LmEquivNorm} we obtain that $$1\in \sigma(\mathrm{I}-
\tfrac{1}{R^2}(H_{\Gamma_1}-\lambda\mathrm{I})(H_{\Gamma_1}-\lambda\mathrm{I})^{*})\cup \sigma(\mathrm{I}-
\tfrac{1}{R^2}(H_{\Gamma_1}-\lambda\mathrm{I})^{*}(H_{\Gamma_1}-\lambda\mathrm{I})).$$ Assume, for instance, that $1\in \sigma(\mathrm{I}-
\tfrac{1}{R^2}(H_{\Gamma_1}-\lambda\mathrm{I})(H_{\Gamma_1}-\lambda\mathrm{I})^{*})$ (the second case can be treated similarly). Denote by $V_i$, $i=1,2$, the vertex set of $\Gamma_i$.
Using the operations on graphs introduced in \cite{DudkoGrigorchuk-Erratum-20} we define for $i=1,2$ the graph $\widetilde\Gamma_i$ with the vertex set $V_i$ such that
$$H_{\widetilde\Gamma_i}=\mathrm{I}-
\tfrac{1}{R^2}(H_{\Gamma_i}-\lambda\mathrm{I})(H_{\Gamma_i}-\lambda\mathrm{I})^{*},\;\;
i=1,2.$$
Namely, we set
$$\widetilde\Gamma_i=1-
\tfrac{1}{R^2}(\Gamma_i-\lambda)\circ(\Gamma_i-\lambda)^*,\;\;i=1,2.$$
We refer the reader to Lemma 4 in \cite{DudkoGrigorchuk-Erratum-20} for details.

The graph $\widetilde\Gamma_1$ is locally contained in the graph $\widetilde\Gamma_2$. The operators $H_{\widetilde\Gamma_i},i=1,2$ are positive of norm at most $1$. In fact, $\|H_{\widetilde\Gamma_1}\|=1$ since $1\in H_{\widetilde\Gamma_1}$ by our assumptions. Let $\epsilon>0$. Since
$$\sup\limits_{\xi:\|\xi\|=1} (H_{\widetilde\Gamma_1}\xi,\xi)=1$$ we can find $v\in V_1$,  $l\in\mathbb N$ and a vector $\eta\in l^2(V_1)$
 supported on $B_l(v)\subset \widetilde\Gamma_1$ such that $(H_{\widetilde\Gamma_1}\eta,\eta)>1-\epsilon$. Let
$w\in V_2$ be such that $B_{l+1}(w)\subset \widetilde\Gamma_2$ is isomorphic (as a rooted weighted graph) to
 $B_{l+1}(v)$. Let $\eta'\in l^2(B_l(w))\subset l^2(V_2)$ be a copy of $\eta$ via this isomorphism.
We get:
 $$(H_{\widetilde\Gamma_2}\eta',\eta')=(H_{\widetilde\Gamma_1}\eta,\eta)>1-\epsilon.$$ Since $\epsilon>0$ is arbitrary it follows that $\|H_{\widetilde\Gamma_2}\|\geqslant 1$. By construction, $\|H_{\widetilde\Gamma_2}\|\leqslant 1$ and therefore $\|H_{\widetilde\Gamma_2}\|=1$. By positivity of $H_{\widetilde\Gamma_2}$ we have $1\in\sigma(H_{\widetilde\Gamma_2})$. Using Lemma \ref{LmEquivNorm} we obtain that $\lambda\in\sigma(H_{\Gamma_2})$ which finishes the proof.
\end{proof}

\section{Proof of Theorem \ref{ThSameSign}.}
Recall that in Theorem \ref{ThSameSign} we assume that $\alpha,\beta\in\mathbb R\setminus\{0\}$ are of the same sign and $x\in (0,1)$. Let $\Gamma=\Gamma_x$ be the corresponding weighted Schreier graph of the action of $F$ and $\mu_x=\mu_{m,x}$ be the spectral measure of $\Gamma$ corresponding to the vector $\delta_x$. We need to show that $$\supp(\mu_x)=\sigma(H_{\Gamma})=[-2|\alpha+\beta|,2|\alpha+\beta|].$$

From the description of Schreier graphs of the action of $F$ on $[0,1]$ (see \cite{Savchuk-SchreierThompson-15}, Theorem 3.3) it follows that $\Gamma$ contains a copy of $\Delta$. Thus, for some vertex $y\in\Gamma$ one has $$\Gamma_y=\Delta_v\cup \widetilde\Gamma_y,$$ where $\Gamma_y$ is the graph obtained from $\Gamma_x$ by moving the root to $y$ and $\widetilde\Gamma_y$ is some subgraph of $\Gamma_y$. We have:
$$F_{\Gamma,y}(t)=F_{\Delta,v}(t)+F_{\widetilde\Gamma,y}(t).$$ From Lemma \ref{LmPF} and \eqref{EqSmuPmu} we obtain:
\begin{equation}\label{EqSMuGammaId}\frac{1}{S_{\mu_{\Gamma,y}}(z)}=
z+\frac{1}{S_{\mu_{\Delta,v}}(z)}+\frac{1}{S_{\mu_{\widetilde \Gamma,y}}(z)}.
\end{equation}

Further, let $I\subset (-2|\alpha+\beta|,2|\alpha+\beta|)$ be an open segment. Let us show that $I\cap \supp(\mu_{\Gamma,y})\neq\varnothing$. Assume the contrary. Then $S_{\mu_{\Gamma,y}}(z)$ is real analytic (\ie has real values and is analytic) on $I$. Clearly, $S_{\mu_{\Gamma,y}}(z)$ is not identically equal to zero. Therefore, there exists an open segment $J\subset I$ such that $S_{\mu_{\Gamma,y}}(z)\neq 0$ on $J$. It follows that
$$\lim\limits_{z\to z_0,\im z>0}\im \frac{1}{S_{\mu_{\Gamma,y}}(z)}=0$$ for all $z_0\in J$. On the other hand, from Lemma \ref{LmImag}, Remark \ref{RemMuDel} and \eqref{EqSmuHalfplanes} we have for every $z_0\in (-2|\alpha+\beta|,2|\alpha+\beta|)$:
$$\liminf\limits_{z\to z_0,\im z>0}\im\frac{1}{ S_{\mu_{\Delta,v}}(z)}<0,\;\;\liminf\limits_{z\to z_0,\im z>0}\im \frac{1}{S_{\mu_{\widetilde\Gamma,y}}(z)}\leqslant 0,$$ where the limit is allowed to be $-\infty$. We obtain a contradiction to \eqref{EqSMuGammaId} from which we conclude that $I\cap\supp(\mu_{\Gamma,y})\neq\varnothing$. Since $I\subset (-2|\alpha+\beta|,2|\alpha+\beta|)$ is an arbitrary open segment and $\supp(\mu_{\Gamma,y})$ is closed we obtain that $$[-2|\alpha+\beta|,2|\alpha+\beta|]\subset \supp(\mu_{\Gamma,y})\subset \sigma(H_{\Gamma}).$$

On the other hand, since for every vertex of $\Gamma$ the sum of the weights of the edges adjacent to it is $2(\alpha+\beta)$ we have $\|H_{\Gamma}\|\leqslant 2|\alpha+\beta|$ and so $\supp(\mu_{\Gamma,y})\subset \sigma(H_\Gamma)\subset [-2|\alpha+\beta|,2|\alpha+\beta|]$.
 This finishes the proof.

\section{Addendum: on Hulanicki type theorem for graphs.}\label{SectionHulanicki}
In \cite{DudkoGrigorchuk-Shape-18} the authors showed the following:
\begin{Th}[Weak Hulanicki Theorem for Graphs]\label{ThGraphCov}  Let $\widetilde\Gamma$ be a connected graph of bounded degree with uniformly bounded weight. Assume that $\widetilde \Gamma$ covers a weighted graph $\Gamma$ such that either
\begin{itemize}\item[$a)$] $\widetilde\Gamma$ is amenable and $\Gamma$ is finite or
\item[$b)$] $\widetilde\Gamma$ has subexponential growth.
 \end{itemize} Let $\widetilde H,H$ be the Laplace type operators associated to $\widetilde\Gamma,\Gamma$. Then $\sigma(H)\subset \sigma(\widetilde H)$.
\end{Th}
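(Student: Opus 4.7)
The plan is to reduce the spectral inclusion to a Rayleigh-quotient statement via Lemma \ref{LmEquivNorm}, and then exhibit approximate top-eigenvectors of $\widetilde H$ by lifting eigenvectors of $H$ along the covering $p\colon\widetilde\Gamma\to\Gamma$ and truncating to F\o lner-type regions in $\widetilde\Gamma$.

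For the reduction, fix $\lambda\in\sigma(H)$ and $R>2\max(\|H\|,\|\widetilde H\|)$. By Lemma \ref{LmEquivNorm}, $\lambda\in\sigma(H)$ is equivalent to $1\in\sigma(K)$ for $K:=\mathrm I-R^{-2}(H-\lambda\mathrm I)(H-\lambda\mathrm I)^{*}$ (or the other sign). The weighted graph operations from \cite{DudkoGrigorchuk-Erratum-20} invoked in the proof of Proposition \ref{PropLocCont} realize $K$ as $H_{\Gamma'}$ for an explicit weighted graph $\Gamma'$, and the covering $p$ lifts to a covering $\widetilde{\Gamma'}\to\Gamma'$ satisfying the same hypothesis (a) or (b), with $\widetilde K=H_{\widetilde{\Gamma'}}$. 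Since $K$ is positive, $\|K\|\le 1$, and $1\in\sigma(K)$ forces $\|K\|=1$, the problem reduces to showing: if $\|H\|=1$ then $\|\widetilde H\|=1$.

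A covering is, by definition, a local isomorphism of weighted graphs: at each $\tilde v$ the directed edges (with weights) incident to $\tilde v$ map bijectively to those at $p(\tilde v)$. In particular, for any $\eta$ on $\Gamma$ and any $\tilde v$ all of whose neighbors also lie in a region where $\eta\circ p$ is defined, one has $(\widetilde H(\eta\circ p))(\tilde v)=(H\eta)(p(\tilde v))$. In case (a), finiteness of $\Gamma$ together with $\|H\|=1$ yields an actual unit eigenvector $\eta\in l^2(\Gamma)$; amenability of $\widetilde\Gamma$ supplies a F\o lner sequence $(F_n)$; and the test vectors $\xi_n:=(\eta\circ p)\cdot\mathbf 1_{F_n}$ satisfy $\|(\widetilde H-\mathrm I)\xi_n\|^2\le C\,|\partial F_n|$ (boundary errors controlled by $\|\eta\|_\infty$ and the uniform degree/weight bounds), while $\|\xi_n\|^2\ge c\,|F_n|$ because a positive fraction of $F_n$ lies over any fixed vertex in the (finite) support of $\eta$. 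The F\o lner property then drives the Rayleigh quotient $(\widetilde H\xi_n,\xi_n)/\|\xi_n\|^2$ to $1$, proving $1\in\sigma(\widetilde H)$.

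Case (b) is where the main technical difficulty lies. Here $\Gamma$ may be infinite, so $\|H\|=1$ only produces approximate eigenvectors $\eta_\varepsilon\in l^2(\Gamma)$, which one may take to be supported on a finite set. To apply the lift-and-truncate recipe above, one needs a region in $\widetilde\Gamma$ projecting isomorphically (as a weighted graph) onto a neighborhood of $\supp(\eta_\varepsilon)$; this is not automatic from covering alone, because cycles in $\Gamma$ may unfold unpredictably. The plan is to exploit subexponential growth to produce, for every $r>0$, vertices $\tilde v\in\widetilde\Gamma$ whose $r$-ball projects isomorphically to its image in $\Gamma$: otherwise a lower bound on the density of short cycles iterates to exponential volume growth. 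Combined with the fact that subexponential growth itself yields amenability (e.g.\ via Coulhon--Saloff-Coste) and hence a F\o lner sequence, the argument of the previous paragraph then applies around such a $\tilde v$. Verifying this injectivity-radius/cycle-control input carefully is the hardest step of the proof.
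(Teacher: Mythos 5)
First, a remark on scope: the paper does not actually prove Theorem \ref{ThGraphCov}; it is quoted from \cite{DudkoGrigorchuk-Shape-18}, and the present text only contributes a counterexample showing that finiteness of $\Gamma$ cannot be dropped in part $a)$. So the comparison below is with the standard argument rather than with a proof printed here. Your treatment of case $a)$ is essentially sound: the reduction via Lemma \ref{LmEquivNorm} to a positive operator $K$ of norm at most $1$, the intertwining $\widetilde K(f\circ p)=(Kf)\circ p$ (which is all you really need --- whether the transformed graphs literally form a weighted covering is immaterial, and in fact the weights of $\widetilde\Gamma'$ need not match those of $\Gamma'$ edge by edge), an honest eigenvector from finiteness of $\Gamma$, and the lower bound $\|\xi_n\|^2\ge c|F_n|$, which follows because path lifting puts every vertex of $\widetilde\Gamma$ within $\mathrm{diam}(\Gamma)$ of each fibre, so each fibre meets a F\o lner set in a definite proportion of its volume.

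Case $b)$ is where there is a genuine gap, and the geometric claim you lean on is false. You want, for every $r$, a vertex $\tilde v$ whose $r$-ball projects isomorphically into $\Gamma$, arguing that otherwise short cycles would force exponential growth. Take $\widetilde\Gamma=\mathbb Z$ (the bi-infinite path) covering $\Gamma=C_3$ by reduction mod $3$: the growth is linear, yet no ball of radius $\ge 2$ can inject into a three-vertex graph. Short cycles in the \emph{base} unwrap in the cover, destroying injectivity of $p$ on balls while contributing no volume upstairs, so the dichotomy ``large injectivity radius or exponential growth'' does not hold. Moreover, the difficulty you are trying to fix is not the real one: the lift $\eta_\varepsilon\circ p$ is defined globally and satisfies the intertwining everywhere; the obstruction is that $p^{-1}(\supp\eta_\varepsilon)$ may be a thin subset of $\widetilde\Gamma$, so F\o lner truncation gives no lower bound on $\|\xi_n\|$. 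The argument that works in case $b)$ is of a different nature. For $v=p(\tilde v)$, the intertwining applied to $\delta_v\circ p=\mathbf 1_{p^{-1}(v)}$ together with the finite propagation of $\widetilde K^{\,n}$ gives
\begin{equation*}
(K^n\delta_v,\delta_v)=\sum_{\tilde w\in p^{-1}(v)}(\widetilde K^{\,n}\delta_{\tilde w},\delta_{\tilde v})\le \bigl|B_{2n}(\tilde v)\bigr|\cdot\|\widetilde K\|^n ,
\end{equation*}
so subexponential growth yields $\max\supp\mu_{K,\delta_v}=\limsup_n (K^n\delta_v,\delta_v)^{1/n}\le\|\widetilde K\|$ for every $v$; by Proposition \ref{PropSpectrumUnion} and positivity this gives $\|K\|\le\|\widetilde K\|$, whence $1\in\sigma(K)$ forces $\|\widetilde K\|=1\in\sigma(\widetilde K)$, and Lemma \ref{LmEquivNorm} returns $\lambda\in\sigma(\widetilde H)$. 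No injectivity radius and no F\o lner sets are needed in case $b)$.
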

\noindent They raised a question wether the part $a)$ can be proven without the assumption of finiteness of $\Gamma$. Here we answer this question by presenting a pair of amenable graphs $\widetilde\Gamma$ and $\Gamma$ such that $\widetilde\Gamma$ covers $\Gamma$ but the spectrum of $\Gamma$ is not contained in the spectrum of $\widetilde\Gamma$. See figure 1 for graphic representations of these graphs. This example shows that Theorem \ref{ThGraphCov} will not be true if dropping the condition of finiteness of $\Gamma$ in part $a)$.

 Denote by $\Gamma_{\mathbb N}$ the graph with the vertex set identified with $\mathbb N$ and two vertices $i,k$ connected by an edge if and only if $|i-k|=1$. Let $T_4$ be the infinite 4-regular tree (each vertex has degree 4). Denote by $\Gamma$ the graph obtained by adding two loops at vertex $1$ of $\Gamma_{\mathbb N}$. Denote by $\widetilde\Gamma$ the graph obtained by attaching a copy of $\Gamma_{\mathbb N}$ at every vertex of $T_4$. Then the map $\phi:\widetilde\Gamma\to\Gamma$ sending each vertex of $T_4$ to the vertex $1$ of $\Gamma$ and each copy of $\Gamma_{\mathbb N}$ from $\widetilde\Gamma$ identically onto the copy of $\Gamma_{\mathbb N}$ in $\Gamma$ defines a covering of graphs. We make each of the graphs described above a weighted graph by putting weight 1 on each edge. Observe that the graphs $\widetilde\Gamma$ and $\Gamma$ are amenable.
 \begin{figure}[h]\centering\includegraphics[width=0.9\linewidth]
{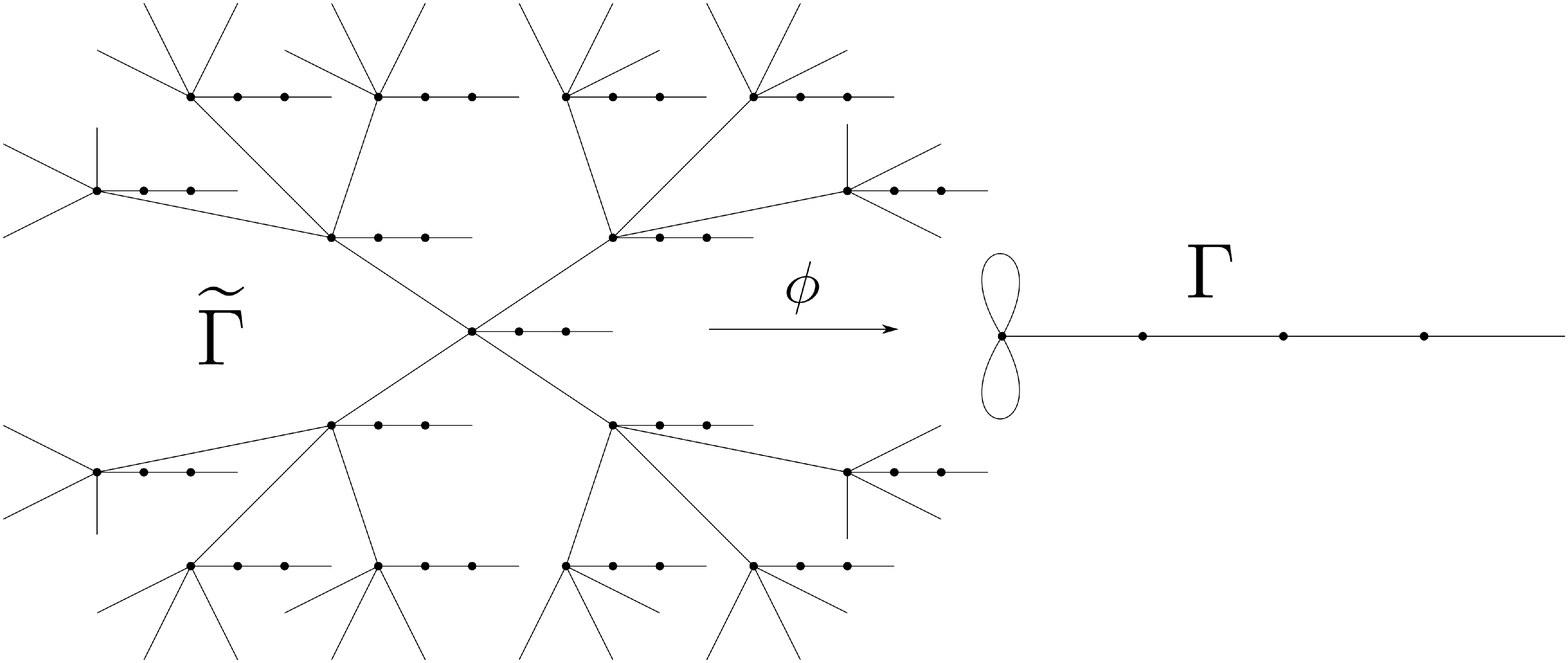}
\caption{The graphs $\Gamma$ and $\widetilde\Gamma$.}\label{FigGammaTildeGamma} \end{figure}
The main result of this section is
\begin{Th}\label{ThSpectraGamma} One has
$$\sigma(H_\Gamma)=[-2,2]\cup {\tfrac{17}{4}},\;\; \sigma(H_{\widetilde\Gamma})=\left[-\frac{13\sqrt 3}{6},\frac{13\sqrt 3}{6}\right].$$ In particular, $\sigma(H_\Gamma)$ is not a subset of $\sigma(H_{\widetilde\Gamma})$.
\end{Th}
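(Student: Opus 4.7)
The overall strategy is to recognize both operators in terms of the same one-parameter family of half-line operators. Let $L$ denote the discrete Dirichlet Laplacian on $l^2(\mathbb N)$, defined by $L\psi(n)=\psi(n-1)+\psi(n+1)$ with the convention $\psi(0)=0$, and let $P_1$ be the orthogonal projection onto $\delta_1$. By the definition of the Laplace type operator in \eqref{EqHeckeGamma}, and using the convention that every loop contributes two directed edges, each loop at vertex $1$ of $\Gamma$ contributes $2f(1)$, so
\[H_\Gamma=L+4P_1\quad\text{on}\;\; l^2(\mathbb N).\]
For $\widetilde\Gamma$, identify $l^2(\widetilde\Gamma)$ with $l^2(V(T_4))\otimes l^2(\mathbb N)$ by sending the $n$-th vertex of the ray attached to $v\in V(T_4)$ to $\delta_v\otimes\delta_n$ (with $(v,1)=v$). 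A direct inspection of the adjacencies gives
\[H_{\widetilde\Gamma}=M_{T_4}\otimes P_1+I\otimes L,\]
where $M_{T_4}$ is the Markov operator of the simple random walk on $T_4$, with $\sigma(M_{T_4})=[-2\sqrt 3,2\sqrt 3]$.

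The key auxiliary step is to compute $\sigma(L+\lambda P_1)$ for a real parameter $\lambda$. Since $L$ has purely absolutely continuous spectrum $[-2,2]$ and $P_1$ is rank one, Weyl's theorem gives $\sigma_{\mathrm{ess}}(L+\lambda P_1)=[-2,2]$. Eigenvalues $z$ with $|z|>2$ are found by solving the three-term recurrence $\psi(n-1)+\psi(n+1)=z\psi(n)$ for $n\geqslant 2$: the unique $l^2$-summable solution has the form $\psi(n)=cr^{n-1}$, where $r=r(z)$ is the root of $t^2-zt+1=0$ of modulus less than one. Substituting into the boundary equation $\psi(2)+\lambda\psi(1)=z\psi(1)$ forces $r=z-\lambda$, and combined with $r+1/r=z$ this yields the eigenvalue condition $z=\lambda+1/\lambda$; the integrability $|r|=|1/\lambda|<1$ shows the bound state exists precisely when $|\lambda|>1$. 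Thus
\[\sigma(L+\lambda P_1)=[-2,2]\cup\{\lambda+1/\lambda:|\lambda|>1\}.\]
Applying this with $\lambda=4$ immediately yields $\sigma(H_\Gamma)=[-2,2]\cup\{17/4\}$.

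For $\sigma(H_{\widetilde\Gamma})$ I would apply the spectral theorem to $M_{T_4}$ to write $l^2(V(T_4))\cong\int^{\oplus}H_\lambda\,d\nu(\lambda)$, where $\nu$ is a scalar spectral measure with full support $\sigma(M_{T_4})=[-2\sqrt 3,2\sqrt 3]$ (the Kesten density of the regular representation of $F_2$ is strictly positive on the open interval). Since $M_{T_4}\otimes I$ commutes with every operator of the form $I\otimes X$, this decomposition diagonalizes $M_{T_4}\otimes P_1$ as the scalar $\lambda P_1$ on each fiber, giving
\[H_{\widetilde\Gamma}\cong\int^{\oplus}_{[-2\sqrt 3,2\sqrt 3]}\bigl(L+\lambda P_1\bigr)\,d\nu(\lambda).\]
Norm-continuity of $\lambda\mapsto L+\lambda P_1$ implies that the spectrum of the direct integral is the closure of the pointwise union of fiber spectra. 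The map $\lambda\mapsto\lambda+1/\lambda$ is monotone on $(1,2\sqrt 3]$ and maps it onto $(2,2\sqrt 3+\tfrac{1}{2\sqrt 3}]=(2,13\sqrt 3/6]$, with the symmetric statement on $[-2\sqrt 3,-1)$; joined with $[-2,2]$, this gives $\sigma(H_{\widetilde\Gamma})=[-13\sqrt 3/6,13\sqrt 3/6]$. Since $17/4=4.25>13\sqrt 3/6\approx 3.75$, the eigenvalue $17/4\in\sigma(H_\Gamma)$ does not belong to $\sigma(H_{\widetilde\Gamma})$, which is the desired non-inclusion.

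The main technical point is the passage from the direct integral to the closure of the pointwise union of fiber spectra; this is standard under norm-continuity of the fibers, but if one prefers to avoid direct integrals altogether, the inclusion $[-13\sqrt 3/6,13\sqrt 3/6]\subset\sigma(H_{\widetilde\Gamma})$ can be proved by an explicit Weyl sequence: for each $z\in(2,13\sqrt 3/6]$ pick $\lambda=\lambda(z)\in(1,2\sqrt 3]$ with $\lambda+1/\lambda=z$, a sequence $\phi_n\in l^2(V(T_4))$ of unit vectors with $\|(M_{T_4}-\lambda)\phi_n\|\to 0$ (which exists since $\lambda\in\sigma(M_{T_4})$), and the bound state $\psi\in l^2(\mathbb N)$, $\psi(n)=\lambda^{-(n-1)}$, of $L+\lambda P_1$; the identity $(H_{\widetilde\Gamma}-z)(\phi_n\otimes\psi)=((M_{T_4}-\lambda)\phi_n)\otimes\delta_1$, verified using $(L+\lambda P_1)\psi=z\psi$ and $\psi(1)=1$, shows $z\in\sigma(H_{\widetilde\Gamma})$. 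The converse inclusion $\sigma(H_{\widetilde\Gamma})\subset[-13\sqrt 3/6,13\sqrt 3/6]$ reduces to the norm bound $\|H_{\widetilde\Gamma}\|\leqslant\sup_{\lambda\in\sigma(M_{T_4})}\|L+\lambda P_1\|=13\sqrt 3/6$, which is where some form of joint spectral calculus for the commuting family $\{M_{T_4}\otimes I\}\cup\{I\otimes X\}$ is unavoidable.
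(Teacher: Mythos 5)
Your proof is correct, but it takes a genuinely different route from the paper. The paper works entirely through the generating-function machinery it has built up: it computes $F_{\Gamma,1}(t)=4t+\tfrac12(1-\sqrt{1-4t^2})$ and the nested-radical formula for $F_{\widetilde\Gamma,v}(t)$ from the self-similar decomposition $\Gamma'_v=\delta+(\Gamma'_v\cup\Gamma'_v\cup\Gamma'_v\cup\Gamma_{\mathbb N,1})$, reads off the supports of the Kesten measures via the Cauchy--Stieltjes transform (the pole of $P_{\Gamma,1}$ at $t=4/17$ producing the atom at $17/4$), and then upgrades the support of a single $\mu_v$ to the full spectrum using its Propositions \ref{PropSuppMuEigenvectors} and \ref{PropSpectrumUnion} together with an eigenvalue analysis ($H_\Gamma$ has the unique eigenvalue $17/4$; $H_{\widetilde\Gamma}$ has none, by reduction to the absence of eigenvalues of $T_4$). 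You instead recognize $H_\Gamma=L+4P_1$ as a rank-one perturbation of the half-line Laplacian and $H_{\widetilde\Gamma}=A_{T_4}\otimes P_1+I\otimes L$ as a direct integral of the fibers $L+\lambda P_1$ over $\sigma(A_{T_4})=[-2\sqrt3,2\sqrt3]$, which reduces everything to the elementary computation $\sigma(L+\lambda P_1)=[-2,2]\cup\{\lambda+1/\lambda:|\lambda|>1\}$. This is cleaner and more conceptual for this particular example --- it makes transparent \emph{why} the covering fails to preserve spectral inclusion (the loop weight $4$ at the root exceeds the spectral radius $2\sqrt3$ of the tree, so $4+\tfrac14>2\sqrt3+\tfrac1{2\sqrt3}$) --- at the cost of invoking Weyl's theorem on essential spectra and direct-integral spectral theory, whereas the paper's method is self-contained and uniform with the rest of the article. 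Your Weyl-sequence identity $(H_{\widetilde\Gamma}-z)(\phi_n\otimes\psi)=((A_{T_4}-\lambda)\phi_n)\otimes\delta_1$ checks out, and the upper bound you flag as needing ``joint spectral calculus'' is exactly what the direct-integral decomposition supplies. One slip to fix: you call $M_{T_4}$ the Markov operator of the simple random walk, but the operator appearing in your tensor decomposition (all edge weights equal to $1$) is the \emph{adjacency} operator of $T_4$; its spectrum is indeed $[-2\sqrt3,2\sqrt3]$, which is the value you use, so the arithmetic is unaffected, but the Markov operator proper would carry the normalization $1/4$.
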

\noindent Note that in the paper \cite{GrigorchukZuk-AsymptoticSpectrum-99}, Lemma 5, the authors show that for a uniform random walk on an infinite graph given that the Markov operator has spectral radius $1$ the eigenvalue $1$ cannot be isolated. In the proof of Lemma 5 it is important that the sum of the weights of edges adjacent to a vertex does not depend on the vertex. In our example the graph $\Gamma$ does not satisfy this condition. That is why it is possible for the Laplace type operator $H_\Gamma$ to have the largest by absolute value eigenvalue $\tfrac{17}{4}$ isolated.

\begin{Lm}\label{LmGammaN} One has $$F_{\Gamma_{\mathbb N},1}(t)=\tfrac{1}{2}(1-\sqrt{1-4t^2}),\;\;F_{\Gamma,1}(t)=4t+F_{\Gamma_{\mathbb N},1}(t).$$
\end{Lm}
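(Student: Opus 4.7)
The plan is to compute both generating functions by exploiting the self-similarity of $\Gamma_{\mathbb N}$ and then reducing the claim about $\Gamma$ to an easy additive observation about loops.

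For the first identity, I would use that the subgraph of $\Gamma_{\mathbb N}$ induced on $\{2,3,\ldots\}$ is, as a rooted weighted graph, isomorphic to $\Gamma_{\mathbb N}$ rooted at $1$. Equivalently, the rooted graph $(\Gamma_{\mathbb N},1)$ is isomorphic to $\delta + (\Gamma_{\mathbb N},1)$ with both new-edge weights equal to $1$. Applying the second formula of Lemma \ref{LmGraphSum} and then Lemma \ref{LmPF} yields
\[
F_{\Gamma_{\mathbb N},1}=t^2 P_{\Gamma_{\mathbb N},1}=\frac{t^2}{1-F_{\Gamma_{\mathbb N},1}},
\]
so $F_{\Gamma_{\mathbb N},1}$ satisfies the quadratic $F^2-F+t^2=0$. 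Since by definition $F_{\Gamma_{\mathbb N},1}(0)=0$, the correct branch is $F=\tfrac12(1-\sqrt{1-4t^2})$, giving the first formula.

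For the second identity, I would simply classify the first-return paths at vertex $1$ in $\Gamma$. Since the two added loops share the vertex $1$ only, every non-trivial first-return path at $1$ in $\Gamma$ is either (a) a single traversal of one of the loops, or (b) a first-return path at $1$ lying entirely in the subgraph $\Gamma_{\mathbb N}\subset\Gamma$; these possibilities are mutually exclusive. Per the definition of a weighted graph, each loop contributes two directed edges, each of weight $1$, hence each loop contributes $2t$ to $F_{\Gamma,1}$. Two loops therefore add $4t$, which by definition of $F$ yields $F_{\Gamma,1}=4t+F_{\Gamma_{\mathbb N},1}$.

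There is no real obstacle here: both identities are essentially one-line applications of Lemmas \ref{LmPF} and \ref{LmGraphSum} combined with the self-similar structure of the half-line and the bookkeeping convention that a loop carries two directed weights. The only small point worth emphasizing in the writeup is the branch choice $\sqrt{1}=1$ when inverting the quadratic, which is forced by the normalization $F_{\Gamma_{\mathbb N},1}(0)=0$.
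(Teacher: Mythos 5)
Your proof is correct and follows essentially the same route as the paper: the identity $\Gamma_{\mathbb N,1}=\delta+\Gamma_{\mathbb N,1}$ together with Lemmas \ref{LmGraphSum} and \ref{LmPF} gives the quadratic $F=t^2/(1-F)$ with the branch fixed by $F(0)=0$, and the loop contribution $4t$ is exactly the paper's application of the union formula (your direct classification of first-return paths, with each loop giving two directed edges of weight $1$, is the same bookkeeping). No gaps.
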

\begin{proof} The weighted rooted graph $\Gamma_{\mathbb N,1}$ satisfies the equation
$$\Gamma_{\mathbb N,1}=\delta+\Gamma_{\mathbb N,1}\;\;(\text{see Subsection}\;\ref{SubsecGeneral}).$$ Using Lemmas \ref{LmPF} and \ref{LmGraphSum} we obtain that the function $f(t)=F_{\Gamma_{\mathbb N},1}(t)$ satisfies the equation
$$f(t)=\frac{t^2}{1-f(t)},\;\;f(0)=0,$$ solving which we arrive at $$f(t)=\tfrac{1}{2}(1-\sqrt{1-4t^2}).$$ Using Lemma \ref{LmGraphSum} we also get $F_{\Gamma,1}(t)=4t+F_{\Gamma_{\mathbb N},1}(t)$.
\end{proof}
\begin{Lm}\label{LmEigenvalues} Let $\Theta$ be a weighted graph, $v$ a vertex of $\Theta$ and $\Omega=\Theta_v\cup \Gamma_{\mathbb N,1}$. Assume that $H_\Omega$ has a real eigenvalue $\lambda$ and $f$ is a corresponding eigenvector. Then $|\lambda|>2$ and the values of $f$ on $\Gamma_{\mathbb N}\subset\Omega$ are of the form \begin{equation}\label{EqEigenvector}f(j)=c\left(\frac{\lambda-\sign(\lambda)\sqrt{\lambda^2-4}}{2}\right)^j,\;\;j\in\mathbb N,\end{equation} where $c$ is a constant and $\sign(\lambda)$ stands for the sign of $\lambda$.
\end{Lm}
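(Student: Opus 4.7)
The plan is to exploit that the tail of $\Omega$ is a half-line with constant unit weights, so the eigenvalue equation restricted to $\Gamma_{\mathbb N}$ becomes a solvable second-order linear recurrence, and then use the $\ell^2$-decay condition to isolate the surviving exponential. For each $j \geq 2$ the vertex $j \in \Gamma_{\mathbb N}$ has exactly two neighbors in $\Omega$, namely $j-1$ and $j+1$, each joined by a single unit-weight edge, so $(H_\Omega f)(j) = f(j-1) + f(j+1)$ and the eigenvalue equation reads
$$f(j+1) = \lambda f(j) - f(j-1), \qquad j \geq 2.$$
The characteristic polynomial $t^2 - \lambda t + 1$ has roots $t_{1,2} = (\lambda \pm \sqrt{\lambda^2 - 4})/2$ satisfying $t_1 t_2 = 1$, so the general solution on $j \geq 1$ is $f(j) = c_1 t_1^j + c_2 t_2^j$.

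Next I would impose $\sum_{j \geq 1} |f(j)|^2 < \infty$ and split on $|\lambda|$. If $|\lambda| < 2$, the two roots are complex conjugates on the unit circle and $|c_1 t_1^j + c_2 t_2^j|^2$ has positive time-average $|c_1|^2 + |c_2|^2$, forcing $c_1 = c_2 = 0$; if $|\lambda| = 2$, the roots coincide at $\pm 1$ and $f(j) = (c_1 + c_2 j)(\pm 1)^j$, again forcing $c_1 = c_2 = 0$; if $|\lambda| > 2$, the roots are real with product one, so exactly one has modulus less than $1$ and the other greater, and the $\ell^2$ condition kills the coefficient of the larger-modulus root, leaving $f(j) = c t^j$. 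A short sign check identifies the survivor as $t = (\lambda - \sign(\lambda)\sqrt{\lambda^2-4})/2$: for $\lambda > 2$ both roots are positive and this is the smaller, while for $\lambda < -2$ both are negative and this is the one of smaller absolute value. This yields the explicit formula \eqref{EqEigenvector}.

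The main obstacle is then excluding the boundary regime $|\lambda| \leq 2$. The preceding analysis forces $f \equiv 0$ on $\Gamma_{\mathbb N}$ in that regime, so in particular $f(v) = f(1) = 0$ and $f(2) = 0$; feeding these back into the eigenvalue equation at $v$, the term $f(2)$ drops out and any loop contributions at $v$ vanish because $f(v) = 0$, so one obtains $\sum_{w \in \Theta,\, w \sim v,\, w \neq v} \omega_{(v,w)} f(w) = 0$, while at any $u \in \Theta \setminus \{v\}$ the equation in $\Omega$ reduces to $(H_\Theta f|_\Theta)(u) = \lambda f(u)$ since such $u$ has no neighbours in $\Gamma_{\mathbb N}$. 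Hence $f|_\Theta$ would be an $\ell^2$-eigenvector of $H_\Theta$ for the same eigenvalue $\lambda$ that vanishes at $v$, and in the applications of the lemma in Theorem \ref{ThSpectraGamma} this is ruled out: for $\Omega = \Gamma$ we have $\Theta = \{v\}$ carrying two loops and $f|_\Theta$ is literally $f(v) = 0$, so $f \equiv 0$ on $\Omega$, contradicting that $f$ is an eigenvector; for $\Omega = \widetilde\Gamma$ the same analysis propagated along the copies of $\Gamma_{\mathbb N}$ attached at every vertex of the underlying tree $T_4$ forces $f$ to vanish on each half-line, after which the closed equations on $T_4$ force $f \equiv 0$ globally.
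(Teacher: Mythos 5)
Your argument follows the same route as the paper's own proof: restrict the eigenvalue equation to the half-line (valid at every $j\geqslant 2$ since those vertices keep only their $\Gamma_{\mathbb N}$-edges), solve the second-order recurrence $f(j+2)-\lambda f(j+1)+f(j)=0$ via its characteristic roots $t_{1,2}$ with $t_1t_2=1$, and use square-summability to kill the root of modulus $\geqslant 1$. Two places where you go beyond the paper are worth recording. First, you treat the degenerate case $|\lambda|=2$ (double root, solutions $(c_1+c_2j)(\pm1)^j$) explicitly, whereas the paper silently folds it into the assertion that $|t_1|=|t_2|=1$ forces $c_1=c_2=0$; your version is the careful one. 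Second, and more substantively, you correctly observe that the recurrence argument only shows that for $|\lambda|\leqslant 2$ the eigenvector vanishes identically on $\Gamma_{\mathbb N}$; it does not yield $|\lambda|>2$ for an arbitrary $\Theta$. Indeed, the lemma as literally stated fails in that generality: take $\Theta$ to be the three-vertex path with middle vertex $v$ and endpoints $u_1,u_2$, all weights $1$, and set $f(u_1)=1$, $f(u_2)=-1$, $f=0$ elsewhere; this is an $l^2$-eigenvector of $H_\Omega$ with eigenvalue $\lambda=0$. The paper's proof has the same gap --- in the regime $\lambda\in[-2,2]$ it derives $c_1=c_2=0$ and simply stops, which proves \eqref{EqEigenvector} with $c=0$ but not the claim $|\lambda|>2$. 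Your patch, namely checking that in each application within Theorem \ref{ThSpectraGamma} the vanishing of $f$ on the attached half-line(s) propagates to $f\equiv 0$, is exactly what is needed and is what the paper implicitly does when invoking the lemma (e.g.\ in Proposition \ref{PropGammaNoEigenvalues} the case $f(w)=0$ is handled separately). So your proposal is correct, matches the paper's method, and in effect repairs the statement rather than merely reproving it.
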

\begin{proof} An eigenvector $f$ of $H_\Omega$ with an eigenvalue $\lambda$ has values on $\Gamma_{\mathbb N}$ satisfying:
\begin{equation}\label{EqRecursive}f(j+2)-\lambda f(j+1)+f(j)=0,\;\;j\in\mathbb N.\end{equation} The characteristic polynomial of this recursive equation $t^2-\lambda t+1=0$ has roots $$t_{1,2}=\frac{\lambda\pm\sqrt{\lambda^2-4}}{2}.$$ The general solution of \eqref{EqRecursive} is $$f(j)=c_1t_1^j+c_2t_2^j,$$ where $c_1,c_2$ are constants. Observe that for $\lambda\in[-2,2]$ we have $|t_1|=|t_2|=1$. Since $f$ is a vector from $l^2(\Omega)$, $f(j)\to 0$, we obtain in this casse that $c_1=c_2=0$. If $\lambda\notin [-2,2]$ then only one of the roots $t_1:=\frac{\lambda-\sign(\lambda)\sqrt{\lambda^2-4}}{2}$ has absolute value less than one. It follows that $c_2=0$ which finishes the proof of Lemma \ref{LmEigenvalues}.
\end{proof}
\begin{Lm}\label{LmSpectrumGamma} One has:
$$\sigma(H_{\Gamma_{\mathbb N}})=[-2,2],\;\;\sigma(H_\Gamma)=[-2,2]\cup {\tfrac{17}{4}}.$$
\end{Lm}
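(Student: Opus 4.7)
The plan is to treat $\Gamma_{\mathbb N}$ first via the Cauchy--Stieltjes machinery of Subsection \ref{SubsecStiltjes}, and then bootstrap to $\Gamma$ by the same method augmented with Lemma \ref{LmEigenvalues}.

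For $\Gamma_{\mathbb N}$, I would combine Lemmas \ref{LmPF} and \ref{LmGammaN} to obtain $P_{\Gamma_{\mathbb N},1}(t)=2/(1+\sqrt{1-4t^2})$. Then \eqref{EqSmuPmu} gives
$$S_{\mu_{\Gamma_{\mathbb N},1}}(z)=\tfrac{1}{2}\bigl(\sqrt{z^2-4}-z\bigr)\quad\text{on }\mathbb{H}_+,$$
with the branch selected so that the transform behaves like $-1/z$ at infinity. Since the cut of $\sqrt{z^2-4}$ lies precisely over $[-2,2]$ and contributes a strictly positive imaginary part there, Corollary \ref{CoSuppMu} yields $\supp(\mu_{\Gamma_{\mathbb N},1})=[-2,2]$, and Proposition \ref{PropSpectrumUnion} gives $[-2,2]\subset\sigma(H_{\Gamma_{\mathbb N}})$. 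The reverse inclusion is immediate: $H_{\Gamma_{\mathbb N}}=S+S^*$ for the unilateral shift $S$ on $l^2(\mathbb N)$, so $\|H_{\Gamma_{\mathbb N}}\|\leqslant 2$.

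For $\Gamma$ the same strategy applies starting from $F_{\Gamma,1}(t)=4t+\tfrac12(1-\sqrt{1-4t^2})$ from Lemma \ref{LmGammaN}. Lemma \ref{LmPF} then gives $P_{\Gamma,1}(t)=2/(1-8t+\sqrt{1-4t^2})$, and after rationalizing the Cauchy--Stieltjes transform one finds
$$S_{\mu_{\Gamma,1}}(z)=\frac{z-8-\sqrt{z^2-4}}{2(4z-17)}.$$
From this closed form I would read off: on $(-2,2)$ the square root has nontrivial imaginary part while $17-4z\geqslant 9$, so $\im S_+(z)>0$; on $\mathbb R\setminus([-2,2]\cup\{17/4\})$ the function is real analytic; at $z=17/4$ the numerator equals $-15/2\neq 0$, so $S_{\mu_{\Gamma,1}}$ has a simple pole. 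Corollary \ref{CoSuppMu} together with Lemma \ref{LmAtomPoleEigenvalue} then gives $\supp(\mu_{\Gamma,1})=[-2,2]\cup\{17/4\}$ and identifies $17/4$ as an eigenvalue of $H_\Gamma$. Via Proposition \ref{PropSpectrumUnion} this supplies the inclusion $[-2,2]\cup\{17/4\}\subset\sigma(H_\Gamma)$.

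For the matching upper bound I would apply Lemma \ref{LmEigenvalues} with $\Theta$ being the one-vertex weighted graph carrying the two loops at vertex $1$, so that $\Gamma=\Theta_1\cup\Gamma_{\mathbb N,1}$. Any eigenvalue $\lambda$ of $H_\Gamma$ must satisfy $|\lambda|>2$, and the corresponding eigenvector is $f(j)=c\,t_1^j$ on $\Gamma_{\mathbb N}$ with $t_1=(\lambda-\sign(\lambda)\sqrt{\lambda^2-4})/2$. The eigenvalue equation at vertex $1$ reads $4f(1)+f(2)=\lambda f(1)$, which forces $t_1=\lambda-4$; equating with the formula for $t_1$ gives $\sign(\lambda)\sqrt{\lambda^2-4}=8-\lambda$. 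The case $\lambda<-2$ is excluded by signs, and the case $\lambda>2$ gives $\lambda=17/4$ after squaring. Thus $17/4$ is the unique eigenvalue of $H_\Gamma$, and in particular $H_\Gamma$ has no eigenvalues inside $[-2,2]$. Since $\Gamma$ is tree-like, Proposition \ref{PropSuppMuEigenvectors} gives $\supp(\mu_w)\subset\supp(\mu_{\Gamma,1})\cup\{17/4\}=[-2,2]\cup\{17/4\}$ for every vertex $w$, and Proposition \ref{PropSpectrumUnion} closes the argument. The main technical hurdle is the Cauchy--Stieltjes calculation simultaneously revealing the absolutely continuous bulk on $[-2,2]$ and the isolated pole at $17/4$; once that formula is in hand, matching the eigenvalue equation at vertex $1$ with the exponential form supplied by Lemma \ref{LmEigenvalues} is routine algebra.
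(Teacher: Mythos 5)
Your proposal is correct and follows essentially the same route as the paper: compute $F$ and $P$ via Lemmas \ref{LmPF} and \ref{LmGammaN}, pass to the Cauchy--Stieltjes transform to get $\supp(\mu_1)$ (the bulk $[-2,2]$ plus the pole at $17/4$ for $\Gamma$), pin down the eigenvalues with Lemma \ref{LmEigenvalues} and the vertex-$1$ equation, and conclude with Propositions \ref{PropSuppMuEigenvectors} and \ref{PropSpectrumUnion}. The only (harmless) deviations are cosmetic: you write the transforms explicitly in the $z$-variable rather than arguing in $t$, and for $\Gamma_{\mathbb N}$ you replace the paper's no-eigenvalue argument by the direct norm bound $\|S+S^*\|\leqslant 2$, which is a slight shortcut.
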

\noindent Note that the first part $\sigma(H_{\Gamma_{\mathbb N}})=[-2,2]$ is well known. For the readers convenience we present a proof here.
\begin{proof} Observe that $$F_{\Gamma_{\mathbb N},1}(t)=\tfrac{1}{2}(1-\sqrt{1-4t^2})$$ defines a function analytic on $\mathbb H_+=\{z\in\mathbb C:\im z>0\}$ and continuous on $\overline{\mathbb H_+}$ having real values for  $t\in [-1/2,1/2]$ and non-real values for $t\in\mathbb R\setminus [-1/2,1/2]$. The same property holds for $$P_{\Gamma_{\mathbb N},1}(t)=\frac{1}{1-F_{\Gamma_{\mathbb N},1}(t)},$$ since  $\tfrac{1}{2}(1-\sqrt{1-4t^2})\neq 1$ for $t\in\mathbb R$. Using \eqref{EqStiltGen} and Corollary \ref{CoSuppMu} we obtain that $\supp(\mu_1)=[-2,2]$, where $\mu_1$ is the spectral measure of $H_{\Gamma_{\mathbb N}}$ at vertex $1$.

Further, assume that $\Gamma_{\mathbb N}$ has an eigenvalue $\lambda$. By Lemma \ref{LmEigenvalues} $|\lambda|>2$ and the corresponding eigenvector $f$ is of the form \eqref{EqEigenvector}. Since $1$ has only one edge attached to it (leading to $2$) we get $\lambda f(1)=f(2)$ from which we get that the value of $c$ from \eqref{EqEigenvector} is zero and $f\equiv 0$. This contradiction shows that $H_{\Gamma_{\mathbb N}}$ does not have eigenvectors. Using Proposition \ref{PropSuppMuEigenvectors} we obtain that
$$\supp(\mu_j)=\supp(\mu_1)=[-2,2]\;\;\text{for every}\;\;j\in\mathbb N.$$ Finally, Proposition \ref{PropSpectrumUnion} implies that $\sigma(H_{\Gamma_{\mathbb N}})=[-2,2]$.

In case of the graph $\Gamma$ there are two differences. First, the equation
$F_{\Gamma,1}(t)=1$ has a unique real solution $t=\tfrac{4}{17}$. Indeed,
$$4t+\tfrac{1}{2}(1-\sqrt{1-4t^2})=1\;\Leftrightarrow\;8t-1=\sqrt{1-4t^2}\;\Rightarrow
\;68t^2-16t=0\;\Rightarrow\;t\in\{0,\tfrac{4}{17}\}.$$ From two solutions $t=0$ and $t=\tfrac{4}{17}$ only the second satisfies the initial equation. It follows that $P_{\Gamma,1}(t)=1/(F_{\Gamma,1}(t)-1)$ has a pole at $\tfrac{4}{17}$, is real analytic on $[-1/2,1/2]\setminus \{\tfrac{4}{17}\}$ and is non-real on $\mathbb R\setminus [-1/2,1/2]$. Using \eqref{EqStiltGen} and Corollary \ref{CoSuppMu} we obtain that $\supp(\mu_1)=[-2,2]\cup\{\tfrac{17}{4}\}$, where $\mu_1$ now is the spectral measure of $H_{\Gamma}$ at vertex $1$.

Second, for an eigenvector $f$ of $H_\Gamma$ corresponding to an eigenvalue $\lambda$ we have $\lambda f(1)=4f(1)+f(2)$. Together with \eqref{EqEigenvector} this leads to:
$$\lambda=4+\frac{\lambda-\sign(\lambda)\sqrt{\lambda^2-4}}{2}.$$ Setting $\lambda=t^{-1}$ we arrive at the equation considered above and conclude that $H_\Gamma$ has a unique eigenvalue $\lambda=\tfrac{17}{4}$. The corresponding eigenvector can be given by $f(j)=\frac{1}{4^j}$. Using Propositions \ref{PropSuppMuEigenvectors} and \ref{PropSpectrumUnion} we obtain that $\sigma(H_\Gamma)=[-2,2]\cup\{\tfrac{17}{4}\}$.
\end{proof}
\begin{figure}[h]\centering\includegraphics[width=0.6\linewidth]
{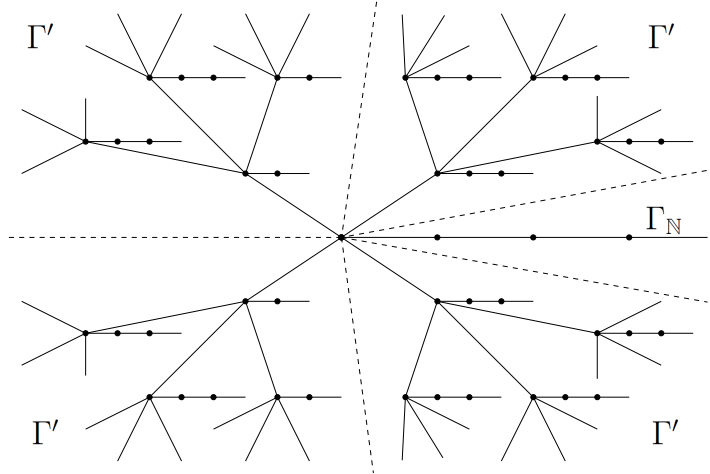}
\caption{Illustration to formula \eqref{EqTildeGammaPresentation}.}\label{FigTildeGammaSplitting} \end{figure}
Fix a vertex $v\in T_4\subset\widetilde\Gamma$. The graph $\widetilde\Gamma$ can be obtained as a union of four isomorphic copies of some graph $\Gamma'$ and a copy of $\Gamma_{\mathbb N}$ intersecting at $v$ (see Figure \ref{FigTildeGammaSplitting}):
\begin{equation}\label{EqTildeGammaPresentation}\widetilde\Gamma_v=\Gamma'_v\cup\Gamma'_v\cup\Gamma'_v\cup\Gamma'_v\cup\Gamma_{\mathbb N,1}.\end{equation}  Observe that $\Gamma'_v$ satisfies the equation:
\begin{equation}\label{EqGammavEquation}\Gamma'_v=\delta+(\Gamma'_v\cup\Gamma'_v\cup\Gamma'_v\cup\Gamma_{\mathbb N,1}).\end{equation}
\begin{Prop}\label{PropFtildeGamma} One has \begin{equation}\label{EqFGammaFormula}F_{\widetilde\Gamma,v}(t)=
\tfrac{1}{6}\left(5-\sqrt{1-4t^2}-2\sqrt{2-52t^2+2\sqrt{1-4t^2}}\right).
\end{equation}
\end{Prop}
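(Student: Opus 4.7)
The plan is to apply the decomposition \eqref{EqTildeGammaPresentation} together with the self-referential equation \eqref{EqGammavEquation} for $\Gamma'_v$, reduce the problem to a quadratic in $F_{\Gamma',v}$ whose coefficients are rational expressions in $F_{\Gamma_{\mathbb N},1}(t)$, solve it explicitly, and then recombine.

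First I would apply Lemma \ref{LmGraphSum} to \eqref{EqTildeGammaPresentation} to obtain
$$F_{\widetilde\Gamma,v}(t)=4\,F_{\Gamma',v}(t)+F_{\Gamma_{\mathbb N},1}(t),$$
so that it suffices to determine $F_{\Gamma',v}(t)$. Writing $y=F_{\Gamma',v}(t)$ and $f=F_{\Gamma_{\mathbb N},1}(t)=\tfrac{1}{2}(1-\sqrt{1-4t^2})$ (the latter given by Lemma \ref{LmGammaN}), the equation \eqref{EqGammavEquation} combined with Lemmas \ref{LmGraphSum} and \ref{LmPF} yields
$$y=\frac{t^2}{1-(3y+f)},$$
which is equivalent to the quadratic
$$3y^2-(1-f)\,y+t^2=0.$$

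Next I would solve this quadratic for $y$, getting
$$y=\frac{(1-f)\pm\sqrt{(1-f)^2-12t^2}}{6},$$
and choose the branch corresponding to $y(0)=0$, i.e.\ the minus sign. Substituting $1-f=\tfrac{1}{2}(1+\sqrt{1-4t^2})$ and expanding, a short algebraic simplification gives
$$(1-f)^2-12t^2=\tfrac{1}{2}\bigl(1-26t^2+\sqrt{1-4t^2}\bigr),$$
and hence
$$\sqrt{(1-f)^2-12t^2}=\tfrac{1}{2}\sqrt{2-52t^2+2\sqrt{1-4t^2}}.$$

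Finally I would substitute back into $F_{\widetilde\Gamma,v}=4y+f$ and collect terms over the common denominator $6$: the numerator becomes $2(1+\sqrt{1-4t^2})+3(1-\sqrt{1-4t^2})-2\sqrt{2-52t^2+2\sqrt{1-4t^2}}=5-\sqrt{1-4t^2}-2\sqrt{2-52t^2+2\sqrt{1-4t^2}}$, which is exactly \eqref{EqFGammaFormula}. The only subtle point is the branch choice for the square root when solving the quadratic; this is settled by checking initial values at $t=0$, where the plus sign would give $4/6\neq 0$ while the minus sign gives $0$, as required by the definition of $F_{\Gamma',v}$ as a power series with no constant term. No serious obstacle is expected — this is a routine but careful computation once the two-step graph recursion has been set up.
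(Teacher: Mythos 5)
Your proposal is correct and follows essentially the same route as the paper's proof: both reduce via \eqref{EqTildeGammaPresentation} and \eqref{EqGammavEquation} to the quadratic $3g^2-(1-F_{\Gamma_{\mathbb N},1})g+t^2=0$, select the branch with $g(0)=0$, and recombine via $F_{\widetilde\Gamma,v}=4F_{\Gamma',v}+F_{\Gamma_{\mathbb N},1}$. The algebraic simplifications you carry out match the paper's computation exactly.
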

\begin{proof} Using Lemmas \ref{LmPF} and \ref{LmGraphSum} we obtain from \eqref{EqGammavEquation}
$$F_{\Gamma',v}=\frac{t^2}{1-3F_{\Gamma',v}-F_{\Gamma_{\mathbb N},1}}.$$ Thus, $g=g(t):=F_{\Gamma',v}(t)$ satisfies the quadratic equation:
$$3g^2-(1-F_{\Gamma_{\mathbb N},1})g+t^2=0,$$ solving which taking into account that $g(0)=0$ we obtain:
\begin{align*}g=\tfrac{1}{6}\left(1-F_{\Gamma_{\mathbb N},1}-\sqrt{(1-F_{\Gamma_{\mathbb N},1})^2-12t^2}\right)=\\
\tfrac{1}{12}\left(1+\sqrt{1-4t^2}-\sqrt{2-52t^2+2\sqrt{1-4t^2}}\right).\end{align*}
From Lemma \ref{LmGraphSum} and \eqref{EqTildeGammaPresentation} we obtain $F_{\widetilde\Gamma,v}=4F_{\Gamma',v}+F_{\mathbb N,1}$ from which Proposition \ref{PropFtildeGamma} follows.
\end{proof}
\begin{Prop}\label{PropGammaNoEigenvalues} The operator $H_{\widetilde\Gamma}$ has no eigenvalues.
\end{Prop}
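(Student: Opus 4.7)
The plan is to exploit the self-similarity of $\widetilde\Gamma$---a copy of $\Gamma_{\mathbb N}$ attached at each vertex of $T_4$---to separate variables, reducing an eigenvalue problem on $\widetilde\Gamma$ to an eigenvalue problem for the adjacency operator $A_{T_4}$ of the $4$-regular tree, and then invoke the classical fact that $A_{T_4}$ has purely continuous spectrum.

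Suppose for contradiction that $f \in l^2(\widetilde\Gamma) \setminus \{0\}$ satisfies $H_{\widetilde\Gamma} f = \lambda f$ for some $\lambda \in \mathbb R$. For each $v \in T_4 \subset \widetilde\Gamma$, denote the attached copy of $\Gamma_{\mathbb N}$ by $\Gamma_{\mathbb N}(v)$, with vertices labelled $j = 1, 2, 3, \ldots$ and $j = 1$ identified with $v$. Exactly as in the proof of Lemma \ref{LmEigenvalues}, at every $j \geqslant 2$ the eigenvalue equation gives $f(j+1) - \lambda f(j) + f(j-1) = 0$, so $f(j) = c_1(v) t_1^{\,j} + c_2(v) t_2^{\,j}$ with characteristic roots $t_{1,2} = (\lambda \pm \sqrt{\lambda^2 - 4})/2$ satisfying $t_1 t_2 = 1$. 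Square-summability along the ray forces: if $|\lambda| \leqslant 2$ then $|t_1| = |t_2| = 1$ and the only $l^2$-solution is the zero solution, so $f$ vanishes identically on $\Gamma_{\mathbb N}(v)$; since every vertex of $T_4$ is the root of some such ray, $f \equiv 0$, a contradiction. Hence $|\lambda| > 2$, and $f(j) = c(v) t_1^{\,j}$ on $\Gamma_{\mathbb N}(v)$, where $t_1 = (\lambda - \sign(\lambda)\sqrt{\lambda^2-4})/2$ has $|t_1| < 1$.

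Next I would write the eigenvalue equation at each vertex $v \in T_4$, whose neighbours in $\widetilde\Gamma$ are its four $T_4$-neighbours $v_1, \ldots, v_4$ together with the vertex $j = 2$ of $\Gamma_{\mathbb N}(v)$. Using $f(v) = c(v) t_1$, $f(v_i) = c(v_i) t_1$, and $f(j=2) = c(v) t_1^{\,2}$, the identity $(H_{\widetilde\Gamma} f)(v) = \lambda f(v)$ becomes
\begin{equation*}
t_1 \sum_{i=1}^{4} c(v_i) + c(v)\, t_1^{\,2} \;=\; \lambda\, c(v)\, t_1 .
\end{equation*}
Dividing by $t_1$ and using $\lambda - t_1 = t_2 = 1/t_1$, this simplifies to $\sum_{i=1}^{4} c(v_i) = t_2\, c(v)$, so $c : T_4 \to \mathbb C$ is an eigenfunction of $A_{T_4}$ with eigenvalue $t_2$. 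A direct geometric-series computation yields $\|f\|_{l^2(\widetilde\Gamma)}^{\,2} = \frac{|t_1|^2}{1 - |t_1|^2}\,\|c\|_{l^2(T_4)}^{\,2}$, so $c \in l^2(T_4) \setminus \{0\}$.

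Finally I would invoke the classical result (essentially due to Kesten \cite{Kesten-RandomWalk-59}) that for $d \geqslant 3$ the adjacency operator on the $d$-regular tree $T_d$ has purely absolutely continuous spectrum $[-2\sqrt{d-1},\, 2\sqrt{d-1}]$, and in particular admits no nonzero $l^2$-eigenfunctions. Applied with $d = 4$ this contradicts the existence of $c$, completing the argument. The only delicate point is the careful handling of the boundary case $|\lambda| \leqslant 2$ (including the degenerate case $t_1 = t_2 = \pm 1$, where solutions along the ray have the form $(c_1 + c_2 j)\, t_1^{\,j}$ and again fail to be square-summable unless identically zero); everything else is a straightforward separation-of-variables reduction followed by a textbook fact about homogeneous trees.
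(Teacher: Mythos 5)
Your proof is correct and follows essentially the same route as the paper's: force exponential decay $f(j)=c(v)t_1^{\,j}$ along each attached ray via Lemma \ref{LmEigenvalues}, observe that the eigenvalue equation at the tree vertices then makes the resulting function on $T_4$ an $l^2$-eigenfunction of the adjacency operator with eigenvalue $t_2=\lambda-\gamma$, and conclude by the classical absence of eigenvalues for the regular tree. The only (cosmetic) difference is your normalization $c(v)=f(v)/t_1$ in place of working directly with $f|_{T_4}$, together with a more explicit treatment of the boundary case $|\lambda|\leqslant 2$ and of vertices where $f$ vanishes.
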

\begin{proof} Assume that $\lambda$ is an eigenvalue of $H_{\widetilde\Gamma}$. Let $f$ be the corresponding eigenvector. It is not hard to see that there exists a vertex $w\in T_4\subset \widetilde\Gamma$ such that $f(w)\neq 0$. For simplicity, identify the vertices of the copy of $\Gamma_{\mathbb N}$ containing $w$ with positive integers. In particular, $w$ is identified with 1. Let $w_1,w_2,w_3,w_4$ be the neighbors of $w$ in $T_4$. By Lemma \ref{LmEigenvalues}, $$f(2)=\gamma f(w),\;\;\text{where}\;\;\gamma=\frac{\lambda-\sign(\lambda)\sqrt{\lambda^2-4}}{2}.$$ On the other hand, $f(w_1)+f(w_2)+f(w_3)+f(w_4)+f(2)=\lambda f(w)$. It follows that
\begin{equation}\label{EqFW}f(w_1)+f(w_2)+f(w_3)+f(w_4)=(\lambda-\gamma)f(w).\end{equation}
The equation \eqref{EqFW} holds for any $w\in T_4$ such that $f(w)\neq 0$.

Now, let $w\in T_4$ be such that $f(w)=0$. Then by Lemma \ref{LmEigenvalues} the values of $f$ on the vertices of the corresponding copy of $\Gamma_{\mathbb N}$ are equal to zero. Therefore, $f(w_1)+f(w_2)+f(w_3)+f(w_4)=\lambda f(w)=0=(\lambda-\gamma)f(w)$. Thus, \eqref{EqFW} holds for all $w\in T_4$. This implies that $\lambda-\gamma$ is an eigenvalue of $H_{T_4}$. However, it is known that the Markov operator of the simple random walk on a $d$-regular tree does not have eigenvalues. This contradiction finishes the proof of Proposition \ref{PropGammaNoEigenvalues}.
\end{proof}
\begin{Prop} One has $\sigma(H_{\widetilde\Gamma})=[-\frac{2\sqrt 3}{13},\frac{2\sqrt 3}{13}]$.
\end{Prop}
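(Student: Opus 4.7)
The plan is to reduce $\sigma(H_{\widetilde\Gamma})$ to the support of a single spectral measure $\mu_v$, and then to compute that support via the Cauchy-Stieltjes transform using the explicit formula from Proposition \ref{PropFtildeGamma}.

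For the reduction, first observe that $\widetilde\Gamma$ is tree-like: between any two adjacent vertices there is exactly one edge, and removing it disconnects the graph (since each copy of $\Gamma_{\mathbb N}$ is attached at a single $T_4$-vertex and $T_4$ is itself a tree). Combining this with Proposition \ref{PropGammaNoEigenvalues}, which asserts that $H_{\widetilde\Gamma}$ has no eigenvalues, Proposition \ref{PropSuppMuEigenvectors} gives $\supp(\mu_w)=\supp(\mu_v)$ for every vertex $w$ of $\widetilde\Gamma$, where $v\in T_4$ is the vertex fixed in Proposition \ref{PropFtildeGamma}. Proposition \ref{PropSpectrumUnion} then yields $\sigma(H_{\widetilde\Gamma})=\supp(\mu_v)$, so it suffices to compute the latter.

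For the computation, I would apply Lemma \ref{LmPF} to convert the formula of Proposition \ref{PropFtildeGamma} into
\[
P_{\widetilde\Gamma,v}(t)=\frac{1}{1-F_{\widetilde\Gamma,v}(t)}=\frac{6}{1+\sqrt{1-4t^2}+2\sqrt{2-52t^2+2\sqrt{1-4t^2}}},
\]
and use \eqref{EqSmuPmu} to write $S_{\mu_v}(z)=-z^{-1}P_{\widetilde\Gamma,v}(z^{-1})$ for large $|z|$, then continue analytically into $\mathbb H_+$. By Corollary \ref{CoSuppMu}, $\supp(\mu_v)$ is the closure of the set of real $s$ where the boundary value of $\im S_{\mu_v}$ from $\mathbb H_+$ is nonzero, equivalently the set where the principal continuation of $F_{\widetilde\Gamma,v}(z^{-1})$ is non-real. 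The central step is locating the real branch points of the nested radical. Setting $u=\sqrt{1-4t^2}$ on the principal branch, the vanishing of the outer radicand $2-52t^2+2u$ becomes the quadratic $13u^2+2u-11=0$, whose admissible root $u=11/13$ yields branch points at $t=\pm\tfrac{2\sqrt 3}{13}$. Translating via $z=t^{-1}$ fixes the endpoints of the interval claimed to be $\supp(\mu_v)$.

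The main technical obstacle is careful branch-tracking of the iterated radical: one must verify that the principal continuation of $\sqrt{2-52t^2+2\sqrt{1-4t^2}}$ produces $\im S_{\mu_v}(z)>0$ throughout the entire interior of the asserted interval (so the support is a full interval rather than a proper subset), while remaining real-valued outside. Since $\tfrac{2\sqrt 3}{13}<\tfrac12$, the inner radical $\sqrt{1-4t^2}$ stays real and positive on the relevant range of $t$, so only the outer radical contributes imaginary part, which simplifies the sign analysis considerably. The upper bound $\supp(\mu_v)\subset\sigma(H_{\widetilde\Gamma})\subset[-\|H_{\widetilde\Gamma}\|,\|H_{\widetilde\Gamma}\|]$ then closes the argument, with $\|H_{\widetilde\Gamma}\|$ controlled by the quantity arising from the location of the branch point via the radius-of-convergence identity in Remark \ref{RemPFConv}.
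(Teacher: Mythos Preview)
Your overall strategy matches the paper's proof: reduce $\sigma(H_{\widetilde\Gamma})$ to $\supp(\mu_v)$ via the tree-like property together with Propositions~\ref{PropGammaNoEigenvalues}, \ref{PropSuppMuEigenvectors} and \ref{PropSpectrumUnion}, then read off the support from where the boundary values of $F_{\widetilde\Gamma,v}$ (hence $P_{\widetilde\Gamma,v}$) become non-real, locating the branch points of the nested radical at $t=\pm\tfrac{2\sqrt3}{13}$. The paper does exactly this, in the opposite order (computation first, reduction last), and additionally invokes Lemma~\ref{LmAtomPoleEigenvalue} with Proposition~\ref{PropGammaNoEigenvalues} to exclude real poles of $P_{\widetilde\Gamma,v}$; your explicit formula for $P_{\widetilde\Gamma,v}$ makes this step transparent, since on $|t|<\tfrac{2\sqrt3}{13}$ the denominator is a sum of positive terms.

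There is, however, a gap in your sign analysis. The assertion that ``the inner radical $\sqrt{1-4t^2}$ stays real and positive on the relevant range of $t$'' holds only for $|t|<\tfrac12$. To conclude that the \emph{entire} interval in $z$ lies in $\supp(\mu_v)$ you must also treat $|t|>\tfrac12$ (equivalently $|z|<2$), where the inner radical is purely imaginary and the outer radicand is genuinely complex; one then has to check that the two imaginary contributions in $F_{\widetilde\Gamma,v}(t)$ do not cancel. The paper covers this region by asserting that $2-52t^2+2\sqrt{1-4t^2}$ is ``either negative or non-real'' outside $[-\tfrac{2\sqrt3}{13},\tfrac{2\sqrt3}{13}]$ and that $F_{\widetilde\Gamma,v}$ is consequently non-real there; your outline stops short of this. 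Note also that the Proposition as printed carries a typo: the endpoints of the spectrum are $\pm\tfrac{13}{2\sqrt3}=\pm\tfrac{13\sqrt3}{6}$ (the $z$-values, as in Theorem~\ref{ThSpectraGamma} and in the paper's own proof), not $\pm\tfrac{2\sqrt3}{13}$ (the $t$-values). Your remark about translating via $z=t^{-1}$ shows you saw this, but it may be the source of the confusion about which $t$-range is ``relevant''.
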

\begin{proof} Consider the equation:
$$2-52t^2+2\sqrt{1-4t^2}=0\;\Rightarrow\;1-4t^2=(1-26t^2)^2\;\Rightarrow\;676t^4-48t^2=0\;\Rightarrow\; t\in\left\{0,\pm\frac{2\sqrt 3}{13}\right\}.$$ Using the latter we derive that $2-52t^2+2\sqrt{1-4t^2}$ has non-negative real values on $\left[-\frac{2\sqrt 3}{13},\frac{2\sqrt 3}{13}\right]$ and either negative or non-real values outside this interval. From formulae \eqref{EqFGammaFormula} we obtain that $F_{\widetilde\Gamma,v}(t)$ has real values on $\left[-\frac{2\sqrt 3}{13},\frac{2\sqrt 3}{13}\right]$ and non-real values outside this interval. The function $P_{\widetilde\Gamma,v}(t)=1/(1-F_{\widetilde\Gamma,v}(t))$ satisfies the same property, since $H_{\widetilde \Gamma}$ has no eigenvalues by Proposition \ref{PropGammaNoEigenvalues} and so $P_{\widetilde\Gamma,v}(t)$ has no poles by Lemma \ref{LmAtomPoleEigenvalue}. Using Corollary \ref{CoSuppMu} and \eqref{EqSmuPmu} we arrive at $$\supp(\mu_v)=\left[-\frac{13\sqrt 3}{6},\frac{13\sqrt 3}{6}\right].$$ Using Propositions \ref{PropSuppMuEigenvectors} and \ref{PropGammaNoEigenvalues} we obtain that $\supp(\mu_w)=\supp(\mu_v)$ for every vertex $w\in\widetilde\Gamma$. Finally, using Proposition \ref{PropSpectrumUnion} we derive that $\sigma(H_{\widetilde\Gamma})=\left[-\frac{13\sqrt 3}{6},\frac{13\sqrt 3}{6}\right]$.
\end{proof}

\bibliographystyle{siam}
\bibliography{Bibliography1}

\begin{thebibliography}{10}

\bibitem{BartholdiGrigorchuk-Spectrum-00}
{\sc L.~Bartholdi and R.~I. Grigorchuk}, {\em On the spectrum of {H}ecke type
  operators related to some fractal groups}, Tr. Mat. Inst. im. V.A. Steklova,
  Ross. Akad. Nauk, 231 (2000), pp.~5--45.

\bibitem{BendikovPittetSauer}
{\sc A.~Bendikov, C.~Pittet, and R.~Sauer}, {\em Specral distribution and
  $l^2$-isoperimetric profile of laplace operators on groups}, Math. Ann., 354
  (2012), pp.~43--71.

\bibitem{CannonFloydParry-ThompsonGroups-96}
{\sc J.~W. Cannon, J.~W. Floyd, and W.~R. Parry}, {\em Introductory notes on
  {R}ichard {T}hompson's groups}, Enseign. Math. (2), 42 (1996), pp.~215--256.

\bibitem{CeccheriniGrigorchukHarpe-Amenability-99}
{\sc T.~Ceccherini-Silberstein, R.~Grigorchuk, and P.~de~la Harpe}, {\em
  Amenability and paradoxical decompositions for pseudogroups and discrete
  metric spaces}, Tr. Mat. Inst. im. V.A. Steklova, Ross. Akad. Nauk, 224
  (1999), pp.~68--11.

\bibitem{DangGrigorchukLyubich-Selfsimilar-20}
{\sc N.-B. Dang, R.~Grigorchuk, and M.~Lyubich}, {\em Self-similar groups and
  holomorphic dynamics: {R}enormalization, integrability, and spectrum}, ArXiv
  e-prints,  (2020).

\bibitem{DoughertyJacksonKechris-94}
{\sc R.~Dougherty, S.~Jackson, and A.~S. Kechris}, {\em The structure of
  hyperfinite {B}orel equivalence relations}, Trans. of AMS, 341 (1994),
  pp.~193--225.

\bibitem{DudkoGrigorchuk-Spectrum-17}
{\sc A.~Dudko and R.~Grigorchuk}, {\em On spectra of {K}oopman, groupoid and
  quasi-regular representations}, J. of Mod. Dyn., 11 (2017), pp.~99--123.

\bibitem{DudkoGrigorchuk-Erratum-20}
\leavevmode\vrule height 2pt depth -1.6pt width 23pt, {\em On spectra of
  representations and graphs. {E}rratum}, ArXiv e-prints,  (2017).

\bibitem{DudkoGrigorchuk-Shape-18}
\leavevmode\vrule height 2pt depth -1.6pt width 23pt, {\em {On the question
  {"Can one hear the shape of a group?"} and Hulanicki type theorem for
  graphs}}, Israel J. Math., 237 (2020), pp.~53--74.

\bibitem{FlajoletOdlyzko-Singularity-90}
{\sc P.~Flajolet and A.~Odlyzko}, {\em Singularity analysis of generating
  functions}, Siam J. Disc. Math., 3 (1990), pp.~216--240.

\bibitem{GrigorchukSunic-Hanoi-08}
{\sc R.~Grigorchuk and Z.~$\check{\text{S}}$uni\'c}, {\em Schreier spectrum of
  the {H}anoi {T}owers group on three pegs}, in Analysis on graphs and its
  applications, vol.~77 of Proc. Sympos. Pure Math., Amer. Math. Soc.,
  Providence, RI, 2008, pp.~183--198.

\bibitem{GrigorchukLenzNagnibeda-Spectra-18}
{\sc R.~Grigorchuk, D.~Lenz, and T.~Nagnibeda}, {\em Spectra of {S}chreier
  graphs of {G}rigorchuk's group and {S}chroedinger operators with aperiodic
  order}, Math. Ann., 370 (2018), pp.~1607--1637.

\bibitem{GrigorchukPerezNagnibeda-Schreier-18}
{\sc R.~Grigorchuk, T.~Nagnibeda, and A.~P\'{e}rez}, {\em On spectra and
  spectral measures of {S}chreier and {C}ayley graphs}, Int. Math. Res. Not.
  IMRN,  (2022), pp.~11957--12002.

\bibitem{GrigorchukNekrashevychSushchanskiy-00}
{\sc R.~Grigorchuk, V.~Nekrashevych, and V.~Sushchanskii}, {\em Automata,
  dynamical systems and infinite groups}, {Proc. Steklov Inst. Math.}, 231
  (2000), pp.~134--214.

\bibitem{Grigorchuk-Grigorchuk-80}
{\sc R.~I. Grigorchuk}, {\em Burnside's problem on periodic groups}, Funkts.
  Anal. Prilozh., 14 (1) (1980), pp.~53--54.

\bibitem{GrigorchukKrylyuk-Grigorchuk-12}
{\sc R.~I. Grigorchuk and Y.~S. Krylyuk}, {\em The spectral measure of the
  {M}arkov operator related to 3-generated 2-group of intermediate growth and
  its {J}acobi parameters}, Algebra and Discrete Mathematics, 13 (2012),
  pp.~237--272.

\bibitem{GrigorchukNekrashevych-SchurComplement-07}
{\sc R.~I. Grigorchuk and V.~Nekrashevych}, {\em Self-similar groups, operator
  algebras and {S}chur complement}, J. Modern Dyn., 1 (2007), pp.~323--370.

\bibitem{GrigorchukZuk-AsymptoticSpectrum-99}
{\sc R.~I. Grigorchuk and A.~\.{Z}uk}, {\em On the asymptotic spectrum of
  random walks on infinite families of graphs}, Random walks and discrete
  potential theory ({C}ortona, 1997),  (1999), pp.~188--204.

\bibitem{Kaimanovich-ThompsonNotLiouville-17}
{\sc V.~Kaimanovich}, {\em Thompsons group {F} is not {L}iouville}, in Groups,
  Graphs and Random Walks, T.~Ceccherini-Silberstein, M.~Salvatori, and
  E.~Sava-Huss, eds., London Mathematical Society Lecture Notes Series,
  Cambridge University Press, 2017, pp.~300--342.

\bibitem{Kesten-BanachMean-59}
{\sc H.~Kesten}, {\em Full {B}anach mean values on countable groups}, Math.
  Scand., 7 (1959), pp.~146--156.

\bibitem{Kesten-RandomWalk-59}
\leavevmode\vrule height 2pt depth -1.6pt width 23pt, {\em Symmetric random
  walks on groups}, Trans. Amer. Math. Soc., 92 (1959), pp.~336--354.

\bibitem{KirschMetzger-IntegratedDensity-07}
{\sc W.~Kirsch and B.~Metzger}, {\em The integrated density of states for
  random {S}chr{\"o}dinger operators}, vol.~76, Part 2 of Proc. Sympos. Pure
  Math., Amer. Math. Soc., Providence, RI, 2007, p.~649–696.

\bibitem{Luck-Invariants-01}
{\sc W.~L{\"u}ck}, {\em $L^2$–invariants and their applications to geometry,
  group theory and spectral theory}, Springer, Berlin, 2001, p.~859–871.

\bibitem{Savchuk-ShcreierThompson-10}
{\sc D.~Savchuk}, {\em Some graphs related to {T}hompson's group {$F$}}, in
  Combinatorial and geometric group theory, Trends Math., Birkh{\"{a}}user
  Basel, 2010, pp.~279--296.

\bibitem{Savchuk-SchreierThompson-15}
\leavevmode\vrule height 2pt depth -1.6pt width 23pt, {\em Schreier graphs of
  actions of {T}hompson's group {$F$} on the unit interval and on the cantor
  set}, Geom. Dedicata, 175 (2015), pp.~355--372.

\bibitem{SilversteinChoi-Stieltjes-95}
{\sc J.~W. Silverstein and S.~I. Choi}, {\em Analysis of the limiting spectral
  distribution of large dimensional random matrices}, J. Multivariate Anal., 54
  (1995), pp.~295--309.

\bibitem{Woess-Asymptotic}
{\sc Woess}, {\em Random walks on infinite graphs and groups: a survey on
  selected topics}, Bull. London Math. Soc., 26 (1994), pp.~1--60.

\end{thebibliography}
\end{document}